\documentclass[a4paper,reqno,11pt]{amsart}
\usepackage[T1]{fontenc}
\usepackage[utf8]{inputenc}
\usepackage{lmodern}
\usepackage[english]{babel}
\usepackage{amsmath}
\usepackage{color}
\usepackage{amsfonts}
\usepackage{amssymb}
\usepackage{amsthm}
\usepackage{stmaryrd}

\usepackage[pdfborder={0 0 0},bookmarks=false]{hyperref}
\usepackage{pstricks}
\usepackage{epsfig}
\usepackage{auto-pst-pdf}

\usepackage{etoolbox}
\usepackage{booktabs} \newcommand{\ra}[1]{\renewcommand{\arraystretch}{#1}}

\textwidth 16cm
\topmargin 0cm
\oddsidemargin 0cm
\evensidemargin 0cm
\setlength{\textheight}{658pt}
\parskip 0.2cm

\newtheorem{thm}{Theorem}[section] 
\newtheorem{lm}[thm]{Lemma}
\newtheorem{prop}[thm]{Proposition}
\newtheorem{propdef}[thm]{Proposition-Definition}

\newtheorem{cor}[thm]{Corollary}
\newtheorem{defi}[thm]{Definition}
\theoremstyle{remark}
\newtheorem{rmq}[thm]{Remark}
 
\newcommand{\tr}{\operatorname{tr}}
\newcommand{\nc}{\operatorname{nc}}
\newcommand{\Tr}{\operatorname{Tr}}
\newcommand{\Hom}{\operatorname{Hom}}
\newcommand{\Diag}{\operatorname{Diag}}
\newcommand{\diff}{\mathrm{d}}
\newcommand{\Id}{\mathrm{Id}}

\newcommand{\C}{\mathbb{C}}
\newcommand{\R}{\mathbb{R}}

\newcommand{\Z}{\mathbb{Z}}

\newcommand{\Alg}{\mathbf{Alg}}
\newcommand{\Gr}{\mathbf{Gr}}
\newcommand{\op}{\mathrm{op}}

\title{Haar states and Lévy processes on the unitary dual group}
\author[G. Cébron \and M. Ulrich]{Guillaume Cébron \and Michaël Ulrich}

\address[Guillaume Cébron]{Universität des Saarlandes (Germany)}
\email{cebron@math.uni-sb.de}
\address[Michaël Ulrich]{Université de Franche-Comté (France)\\Ernst-Moritz-Arndt Universität Greifswald (Germany)}
\email{michael.ulrich@univ-fcomte.fr}
\thanks{Guillaume Cébron is supported by the ERC advanced grant "Noncommutative distributions in free probability".}
\begin{document}

\begin{abstract}We study states on the universal noncommutative $*$-algebra generated by the coefficients of a unitary matrix, or equivalently states on the unitary dual group. Its structure of dual group in the sense of Voiculescu allows to define five natural convolutions. We prove that there exists no Haar state for those convolutions. However, we prove that there exists a weaker form of absorbing state, that we call Haar trace, for the free and the tensor convolutions. We show that the free Haar trace is the limit in distribution of the blocks of a Haar unitary matrix when the dimension tends to infinity. Finally, we study a particular class of free Lévy processes on the unitary dual group which are also the limit of the blocks of random matrices on the classical unitary group when the dimension tends to infinity.
\end{abstract}

\maketitle

\section*{Introduction}
Let $n\geq 1$, and $U(n)$ be the group of unitary $n\times n$-matrices. As proved in~\cite{Glockner1989}, the coefficient $*$-algebra of $U(n)$ generated by the matrix coordinate functions $U\mapsto U_{ij}$ is isomorphic to the commutative $*$-algebra $U_{n}$ generated by $n^2$ elements $\{u_{ij}\}_{1\leq i,j\leq n}$ fulfilling the relations which make the matrix $(u_{ij})_{ i,j=1}^n$ unitary. The group law of $U(n)$ gives rise to a structure of Hopf algebra $(U_{n},\Delta,\delta, \Sigma)$. Brown introduced in~\cite{Brown1981} the (noncommutative) $*$-algebra $U_{n}^{\nc}$, sometimes called the Brown algebra, generated by $n^2$ elements $\{u_{ij}\}_{1\leq i,j\leq n}$ fulfilling the same relations making $(u_{ij})_{ i,j=1}^n$ unitary. Even though the Brown algebra is not a Hopf algebra, which seems to limit its study, it is possible to define a structure of a dual group $U\langle n \rangle=(U_{n}^{\nc},\Delta,\delta, \Sigma)$ in the sense of Voiculescu~\cite{Voiculescu1987} (see Definition~\ref{dualVoi}). We refer to $U\langle n \rangle$ as the \emph{unitary dual group}, and $U_{n}^{\nc}$ has to been considered as the "matrix coordinate functions" on $U\langle n \rangle$. This structure allows to define naturally five notions of convolution of states on $U_{n}^{\nc}$ (of "measures" on $U\langle n \rangle$) with respect to the five natural independence: the \emph{free convolution}, the \emph{tensor convolution}, the \emph{boolean}, the \emph{monotone} and the \emph{anti-monotone} one. The original motivation for this paper was to understand the existence of Haar states, or absorbing states, for those different convolutions. In the present paper, we prove that, except in the case $n=1$, there exist no Haar states for those five convolutions.

There exist at least two different ways of representing $U\langle n \rangle$. On one hand, Glockner and von Waldenfels proved in~\cite{Glockner1989} that $U_{n}^{\nc}$ is isomorphic to the complex algebra generated by some concrete operator-valued functions.
On the other hand, in~\cite{McClanahan1992}, Mc Clanahan studies the $C^*$-algebra $C^*(U_{n}^{\nc})$ generated by $U_{n}^{\nc}$ and proves that it is isomorphic to the relative commutant of the matrix algebra $\mathcal{M}_n(\mathbb{C})$ in the free product of $\mathcal{M}_n(\mathbb{C})$ with the algebra of continuous function $C(\mathbb{U})$ on the unit complex circle. In fact, the work of Mc Clanahan covers also the study of a particular state, which is the free product state of the normalized trace on $\mathcal{M}_n(\mathbb{C})$ and the Haar measure on $C(\mathbb{U})$ in the previous construction.

We give another construction of this particular state and prove that it is a \emph{Haar trace} for the free convolution on $U\langle n\rangle$, in the sense that it is an absorbing element in the set of tracial states for the free convolution. The proof only relies on the combinatorial aspects of the free cumulant theory~\cite{Nica2006}. We construct also a Haar trace for the tensor convolution on $U\langle n\rangle$, and prove that there exist no Haar traces for the three others convolutions.

One other direction in the understanding of the unitary dual group is the study of \emph{free Lévy processes} on it, in the sense of~\cite{Ghorbal1999}. Quantum Lévy processes on quantum groups, bialgebras or dual groups have been intensively studied by Ben Ghorbal, Franz, Schürmann and Voss (see~\cite{Ghorbal2002,Ghorbal2005,Franz2006,Schurmann1993,Schurmann2014,Voss2013,Voss2013b}). Very recently, the second author of the present article enlightened a deep link between free Lévy processes on $U\langle n \rangle$ and random matrices in~\cite{Ulrich2014}. More precisely, he proved that the $n^2$ blocks of a Brownian motion on the classical unitary group $U(nN)$ of dimension $nN$ converge to the elements of a free Lévy processes on $U\langle n \rangle$ when $N$ tends to infinity. A question occurs naturally: which free Lévy processes can be obtained in the same fashion? A possible starting point is the general model of Lévy processes on the unitary group $U(nN)$ defined by the first author in~\cite{Cebron2014b}. In the present paper, we define a particular class of free Lévy processes on $U\langle n \rangle$ and prove that those processes are indeed the limit of the blocks of the model of~\cite{Cebron2014b} when $N$ tends to infinity. The proof of this phenomenon gives a new demonstration of the result in~\cite{Ulrich2014}. Moreover, the argument is also valid to construct a random matrix model for the free Haar trace on $U\langle n \rangle$. As a by-product, we prove an embedding theorem which is already well-established for the other independences: the realization of every free L\'{e}vy process on $U\langle n\rangle$ as a stochastic process on some Fock space.

The paper is organised as follows. In Section~\ref{preliminaries}, we introduce the unitary dual group $U\langle n \rangle$ as well as the different notions of convolution. We also give the description of a general construction of quantum random variables on $U\langle n \rangle$ which is useful in the others sections. In Section~\ref{Haarstate}, we state and prove Theorem~\ref{Haartracetheorem} about existence of Haar traces for the free and the tensor convolution, and non-existence for the others convolutions. In Section~\ref{matrixmodel}, we show that the free Haar trace is the limit of a Haar unitary random matrix in the sense of Theorem~\ref{limitrandomdeux}. In Section~\ref{secfree}, we introduce the free Lévy processes on $U\langle n \rangle$ and consider a particular class of processes which are limit of Lévy processes on the unitary group in the sense of Theorem~\ref{limitrandomun} and compute their generators.

\subsection*{Acknowledgements}The authors wish to thank Uwe Franz for enlightening discussions about the unitary dual group, and for algebraic insights in the proof of Theorem~\ref{Haarstatetheorem}. We also express our gratitude to the team of Roland Speicher in Saarbrücken for useful discussions related to this work, and specially Moritz Weber, whose helpful comments led to improvements in this manuscript.

\section{Preliminaries}\label{preliminaries}
This section is devoted to introduce jointly basic facts about the unitary dual group $U\langle n \rangle$ and about noncommutative probability. In Section~\ref{buildstate}, we describe a procedure which allows to define quantum random variables on $U\langle n \rangle$ starting from unitary random variables. This procedure plays a crucial role in the others sections of this paper.

\subsection{The five notions of independence}

A \emph{noncommutative space} is a unital $*$-algebra $\mathcal{A}$ equipped with a state $\phi$, that is to say a linear functional $\phi:\mathcal{A}\rightarrow\mathbb{C}$ which is positive ($\phi(A^*A)\geq0$ for all $A$ in $\mathcal{A}$), and such that $\phi(1)=1$. Let $(\mathcal{A}_1,\phi_1)$ and $(\mathcal{A}_2,\phi_2)$ be two noncommutative spaces. A $*$-homomorphism $f:\mathcal{A}_1\to\mathcal{A}_2$ such that $\phi_1=\phi_2\circ f$ is called \emph{a $*$-homomorphism of noncommutative probability spaces}.

\begin{defi}Let $\mathcal{A}$ and $\mathcal{B}$ be unital $*$-algebras. The \emph{free product} of $\mathcal{A}$ and $\mathcal{B}$ is the unique unital $*$-algebra $\mathcal{A}\sqcup \mathcal{B}$ with $*$-homomorphisms $i_1:\mathcal{A}\to \mathcal{A}\sqcup \mathcal{B}$ and $i_2:\mathcal{B}\to \mathcal{A}\sqcup \mathcal{B}$ such that, for all $*$-homomorphisms $f:\mathcal{A} \to \mathcal{C}$ and $g:\mathcal{B}\to \mathcal{C}$, there exists a unique $*$-homomorphism $f\sqcup g : \mathcal{A}\sqcup \mathcal{B} \to \mathcal{C}$ such that $f=(f\sqcup g)\circ i_1$ and $g=(f\sqcup g)\circ i_2$.
\end{defi}
%
Informally, $\mathcal{A}\sqcup \mathcal{B}$ corresponds to the "smallest" $*$-algebra containing $\mathcal{A}$ and $\mathcal{B}$ and such that there is no relation between $\mathcal{A}$ and $\mathcal{B}$ except the fact that the unit elements are identified. We usually say that $\mathcal{A}$ is the left leg of $\mathcal{A}\sqcup \mathcal{B}$, whereas $\mathcal{B}$ is its right leg, and, for all $A\in \mathcal{A}$ and $B\in \mathcal{B}$, we denote $i_1(A)$ by $A^{(1)}$ and $i_2(B)$ by $B^{(2)}$. This terminology is particularly useful when we consider the free product $\mathcal{A}\sqcup \mathcal{A}$ of $\mathcal{A}$ with itself, because, in this case, there exists two different way of thinking about $\mathcal{A}$ as a subset of $\mathcal{A}\sqcup \mathcal{A}$. Of course, if $\mathcal{A}$ and $\mathcal{B}$ are disjoint, we can avoid this subscript and identify $\mathcal{A}$ with $i_1(\mathcal{A})$ and $\mathcal{B}$ with $i_2(\mathcal{B})$.
For $*$-homomorphisms $f:\mathcal{A}\rightarrow \mathcal{C}$, $g:\mathcal{B}\rightarrow D$, we denote by $f\bigsqcup g$ the $*$-homomorphism $ (i_\mathcal{C}\circ f ) \sqcup (i_D\circ g):\mathcal{A}\sqcup \mathcal{B} \to \mathcal{C}\sqcup \mathcal{D}$.

Let $(\mathcal{A}_1,\phi_1)$ and $(\mathcal{A}_2,\phi_2)$ be two noncommutative spaces. The free product $\mathcal{A}_1\sqcup \mathcal{A}_2$ can be equipped with five different product states, called respectively free, tensor independent (or just tensor), boolean, monotone and anti-monotone product of states. We define those five constructions (see~\cite{Muraki2002} for a general study). First of all, we will assume for our unital $*$-algebras the decomposition of vector spaces $\mathcal{A}=\mathbb{C}1_{\mathcal{A}}\oplus \mathcal{A}^0$, where $\mathcal{A}^0$ is a $*$-subalgebra of $\mathcal{A}$. Remark that this decomposition is not necessarily unique, and sometimes does not exist.

\begin{defi}Let $(\mathcal{A}_1,\phi_1)$ and $(\mathcal{A}_2,\phi_2)$ be two noncommutative spaces with $\mathcal{A}_1=\mathbb{C}1_{\mathcal{A}_1}\oplus \mathcal{A}_1^0$ and $\mathcal{A}_2=\mathbb{C}1_{\mathcal{A}_1}\oplus \mathcal{A}_2^0$. There exist five different states $\phi_1\ast \phi_2$, $\phi_1\otimes \phi_2$, $\phi_1\diamond \phi_2$, $\phi_1 \rhd \phi_2$ and $\phi_1 \lhd \phi_2$ on $\mathcal{A}_1\sqcup \mathcal{A}_2$, called respectively \emph{free, tensor independent (or just tensor), boolean, monotone and anti-monotone product}, and defined, for all $A_1,\ldots, A_n\in \mathcal{A}_1\sqcup \mathcal{A}_2$ such that $A_i\in \mathcal{A}_{\epsilon_i}^0$ and $\epsilon_1\neq \epsilon_2\cdots\neq \epsilon_n$, by respectively the following relations
\begin{itemize}
\item $\phi_1\ast \phi_2(A_1\cdots A_n)=0$ whenever $\phi_1(A_1)=\cdots=\phi_n(A_n)=0$;
\item$ \phi_1\otimes \phi_2(A_1\cdots A_n)=\phi_1(\prod_{i:\epsilon_i=1}A_i)\phi_2(\prod_{i:\epsilon_i=2}A_i)$;
\item $\diamond(A_1\cdots A_n)=\prod_{i=1}^n\phi_{\epsilon_i}(A_i)$;
\item $\rhd(A_1\cdots A_n)=\phi_1(\prod_{i:\epsilon_i=1}A_i)\prod_{i:\epsilon_i=2}\phi_2(A_i)$;
\item $\lhd(A_1\cdots A_n)=\prod_{i:\epsilon_i=1}\phi_1(A_i)\phi_2(\prod_{i:\epsilon_i=2}A_i)$.
\end{itemize}
The tensor product and the free product do not depend on the choice of the decomposition $\mathcal{A}_1=\mathbb{C}1_{\mathcal{A}_1}\oplus \mathcal{A}_1^0$ and $\mathcal{A}_2=\mathbb{C}1_{\mathcal{A}_1}\oplus \mathcal{A}_2^0$, but the other three products do.
\end{defi}
The positivity follows from a GNS representation. Freeness of random variables in the sense of Voiculescu~\cite{Voiculescu1992} can be expressed thanks to this notion of free product of states.
\begin{defi}Let $A_1$ and $A_2$ be random variables in $(\mathcal{A},\phi)$. We denote by $\mathcal{A}_1$ the unital $*$-algebras generated by $A_1$, by $i_1$ the natural inclusion of $*$-algebras $i_1:\mathcal{A}_1\to \mathcal{A}$ and by $\phi_1$ the state $\phi_{|\mathcal{A}_1}$. We define similarly $\mathcal{A}_2$, $i_2$ and $\phi_2$.

We say that $A_1$ and $A_2$ are $*$-\emph{free} if
$\phi \circ (i_1\sqcup i_2)=\phi_1\ast \phi_2 $ as states on $\mathcal{A}_1\sqcup\mathcal{A}_2$.
\end{defi}
\subsection{Dual groups}
The notion of dual groups was introduced by Voiculescu~\cite{Voiculescu1987} in the 80's. We consider here the purely algebraic version. Like Hopf algebras, the idea is to generalize the notion of groups to a noncommutative setting by replacing the product by a coproduct, but we now use the free product instead of the tensor product.

\begin{defi}
A \emph{dual group} $G=(\mathcal{A},\Delta,\delta,\Sigma)$ (in the sense of Voiculescu) is a unital $*$-algebra $\mathcal{A}$, and three unital $*$-homomorphisms $\Delta:\mathcal{A}\rightarrow \mathcal{A}\sqcup \mathcal{A}$, $\delta:\mathcal{A}\rightarrow\mathbb{C}$ and $\Sigma:\mathcal{A}\rightarrow \mathcal{A}$, such that\label{dualVoi}
\begin{itemize}
\item The map $\Delta$ is a \emph{coproduct coassociative}: $(\Id\bigsqcup\Delta)\circ\Delta=(\Delta\bigsqcup \Id)\circ\Delta$
\item The map $\delta$ is a \emph{counit}: $(\delta\bigsqcup \Id)\circ\Delta=\Id=(\Id\bigsqcup\delta)\circ\Delta$
\item The map $\Sigma$ is a \emph{coinverse}: $(\Sigma\sqcup \Id)\circ\Delta=1_\mathcal{A}\circ\delta=(\Id\sqcup\Sigma)\circ\Delta$.
\end{itemize}
\end{defi}
Following the point of view of the theory of quantum groups, we consider the $*$-algebra $\mathcal{A}$ as a set of "functions on the dual group $G$", and not as the dual group. This terminology of \emph{dual group} can be ambiguous and one could prefer the terms \emph{$H$-algebras} used by Zhang in~\cite{Zhang1991}, or the term \emph{co-group} used by Bergman and Hausknecht in~\cite{Bergman1996}. However, in the following remark, whose understanding is not needed for the rest of the paper, we will see that the duality can be seen as the existence of some particular functor.

\begin{rmq}\begin{enumerate}\item Let $\Alg$ be the category of unital $*$-algebras. The dual category $\Alg^{\op}$ is the category $\Alg$ with all arrows reversed. The definition of a dual group has the immediate consequence that \emph{an element $\mathcal{A}$ in the category $\Alg$ which defines a dual group $G=(\mathcal{A},\Delta,\delta,\Sigma)$ has a group structure in the dual category $\Alg^{\op}$}, in the following sense of~\cite[Chapter 4]{Bucur1968}: we have the commutativity of all the diagrams obtained from the diagrams defining a classical group by replacing the product by $\Delta^{\op}$, the unit map by $\delta^{\op}$ and the inverse map by $\Sigma^{\op}$. Remark that $\Alg^{\op}$ is not a concrete category: the morphism $\Delta^{\op}$ in $\Alg^{\op}$ can not be seen as an actual function from $\mathcal{A}\sqcup \mathcal{A}$ to $\mathcal{A}$. Nevertheless, as shown in~\cite[Chapter 4]{Bucur1968}, this group structure is sufficient to endow naturally the set $\Hom_{\Alg^{\op}}(\mathcal{B},\mathcal{A})$ of morphisms of $\Alg^{\op}$ from any unital $*$-algebra $\mathcal{B}$ to $\mathcal{A}$ with a classical structure of group.
\item As a consequence, for any unital $*$-algebra $\mathcal{B}$, the set $\Hom_\Alg(\mathcal{A},\mathcal{B})$ of the unital $*$-homomorphisms from $\mathcal{A}$ to $\mathcal{B}$ is a group. Moreover, one can verify that $\Hom_\Alg(\mathcal{A},\cdot):\mathcal{B}\mapsto \Hom_\Alg(\mathcal{A},\mathcal{B})$ is a functor from $\Alg$ to the category of groups $\Gr$. Conversely, if a unital $*$-algebra $\mathcal{A}$ is such that $\Hom_\Alg(\mathcal{A},\cdot)$ is a functor from $\Alg$ to $\Gr$, then $G=(\mathcal{A},\Delta, \delta, \Sigma)$ is a dual group for some particular $\Delta$, $\delta$ and $\Sigma$ (see~\cite{Zhang1991} for a direct proof, or~\cite[Chapter 4]{Bucur1968} for a proof of the dual statement about $\Alg^{\op}$). We can summarize those considerations saying that \emph{dual groups are in one-to-one correspondence with the representing objects of the functors from $\Alg$ to $\Gr$}. As a comparison, commutative Hopf algebras are the representing objects of the functors from the category of unital commutative algebras to $\Gr$.
\item Now, starting from a group $G$, one can ask the following question: is there a unital $*$-algebra $\mathcal{A}$ such that $\Hom_\Alg(\mathcal{A},\cdot)$ is a functor and $\Hom_\Alg(\mathcal{A},\C)\simeq G$? If yes, there exist $\Delta$, $\delta$ and $\Sigma$ such that $(\mathcal{A},\Delta, \delta, \Sigma)$ is a dual group which can be called \emph{a dual group of $G$} (not unique). One of Voiculescu's motivation of~\cite{Voiculescu1987} was to show that a dual action of a dual group of $G$ on some operator algebra gives rise to an action of $G$ on that operator algebra. For example, the unitary dual group $U\langle n\rangle$, the principal object of our study defined subsequently, is a dual group of the classical unitary group $U(n)=\{M\in \mathcal{M}_n(\mathbb{C}):U^*U=I_n\}$ in the sense that $\Hom_\Alg(U_{n}^{\nc},\C)\simeq U(n)$.
\end{enumerate}
\end{rmq}

As explained in the introduction, the first motivation of this article is a better understanding of Haar states and Lévy processes on dual groups. We know that those objects play a crucial role in the theory of compact quantum groups, and ideas from this theory can be a guide in the study of dual groups. However, let us emphasize, in the following remark, the major differences between dual groups and compact quantum groups.

\begin{rmq}\begin{enumerate}\item Firstly, as Hopf algebras, the definition is purely algebraic: we use only the idea of $*$-algebras and we do not need to consider some $C^*$-algebra. One possible direction of research is to consider a more analytic structure on dual groups which could lead to more powerful results.
\item The second difference is that the tensor product has here been replaced by the free product. The latter is in some way "more noncommutative" because in the case of the tensor product, the two legs of the product are still commuting. If we have gained in noncommutativity, we have lost in interpretation: while a classical (compact) group could always be seen as a (compact) quantum group via the isomorphism $C(G\times G)\simeq C(G)\otimes C(G)$, we do not have such an isomorphism any more and hence classical groups cannot be seen as special cases of dual groups.
\item Finally, let us also remark that we here impose to have $*$-homomorphisms which correspond to the idea of a neutral element and inverses, whereas in the quantum case we only imposed the quantum cancellation property. We know that this cancellation property, which in the classical case yields automatically groups, is in the quantum case somewhat weaker. If we imposed in the quantum case to have a "neutral element" and "inverses" we would have only quantum groups of Kac type.
\end{enumerate}
\end{rmq}
\subsection{Unitary dual group $U\langle n \rangle$}
We introduce now the unitary dual group $U\langle n\rangle$, first considered by Brown in~\cite{Brown1981}, and which possesses naturally a structure of dual group. It has to be considered as the noncommutative analog of the classical unitary group.
\begin{defi}
Let $n\geq 1$. The \emph{unitary dual group} is the dual group $U\langle n\rangle=(U_{n}^{\nc}, \Delta,\delta,\Sigma)$ where:
\begin{itemize}
\item The universal unital $*$-algebra $U_{n}^{\nc}$ is generated by $n^2$ elements $(u_{ij})_{1\leq i,j\leq n}$ with the relations $$\sum_{k=1}^n u_{ki}^*u_{kj}=\delta_{ij}=\sum_{k=1}^n u_{ik}u_{jk}^*.$$
\item The coproduct is given on the generators by $\Delta(u_{ij})=\sum_k u_{ik}^{(1)}u_{kj}^{(2)}$.
\item The counit is given by $\delta(u_{ij})=\delta_{ij}$.
\item The antipode is given by $\Sigma(u_{ij})=u_{ji}^*$.
\end{itemize}
\end{defi}
Let us remark that the relations defining $U_{n}^{\nc}$ can be summed up by saying that $u=(u_{ij})_{1\leq i,j\leq n}$ is a unitary matrix in $\mathcal{M}_n(U_{n}^{\nc})$. We do not suppose that $\bar{u}=(u_{ij}^*)_{1\leq i,j\leq n}$ is unitary. Indeed, unlike the relations $\sum_{k=1}^n u_{ki}^*u_{kj}=\delta_{ij}=\sum_{k=1}^n u_{ik}u_{jk}^*$, the relations $\sum_{k=1}^n u_{ik}^*u_{jk}=\delta_{ij}=\sum_{k=1}^n u_{ki}u_{kj}^*$ do not pass the coproduct $\Delta$, since we cannot simplify expressions like $\Delta(\sum_k u_{ik}^*u_{jk})=\sum_{k,p,q}u_{pk}^{(2)*}u_{ip}^{(1)*}u_{jq}^{(1)}u_{qk}^{(2)}$ to $\delta_{ij}$.


A \emph{quantum random variable} on $U\langle n\rangle$ over the probability space $\mathcal{A}$ is a $*$-homomorphism $j$ from $U_{n}^{\nc}$ to $\mathcal{A}$
(this reverse terminology is the usual one when dealing with dual objects). Of course, a quantum random variable $j:U_{n}^{\nc}\to \mathcal{A}$ yields to a unitary matrix $(j(u_{ij}))_{1\leq i,j\leq n}\in \mathcal{M}_n(\mathcal{A})$, and conversely, for all matrix $(A_{ij})_{1\leq i,j\leq n}\in \mathcal{M}_n(\mathcal{A})$ which is unitary, there exists a unique $*$-homomorphism $j:U_{n}^{\nc}\to \mathcal{A}$ such that $j(u_{ij})=A_{ij}$. In a certain sense, $U\langle n\rangle$ is one possible formalism to deal with unitary elements of $\mathcal{M}_n(\mathcal{A})$.
The coproduct leads to different notions of convolution, that we sum up below. Let us remark that we can define five different convolutions of states, instead of the unique convolution of states on quantum group, given by the tensor convolution.

\begin{defi}\begin{enumerate}
\item For two quantum random variables $j_1:U_{n}^{\nc}\to \mathcal{A}$ and $j_2:U_{n}^{\nc}\to \mathcal{A}$ on $U\langle n\rangle$, we define the \emph{convolution} $j_1\star j_2:U_{n}^{\nc}\to \mathcal{A}$ to be $j_1\star j_2=(j_1\sqcup j_2) \circ \Delta$.
\item Let us consider the decomposition $U_{n}^{\nc}=\C \oplus \ker(\delta)$. For two states $\phi, \psi$ on $U_{n}^{\nc}$, we define five different states  $\phi\star_F \psi$, $\phi\star_T \psi$, $\phi\star_B \psi$, $\phi\star_M \psi$ and $\phi\star_{AM} \psi$ on $U_{n}^{\nc}$, called respectively \emph{free, tensor independent (or just tensor), boolean, monotone} and \emph{anti-monotone convolution}, and defined by respectively
$$
\phi\star_F \psi=(\phi\ast \psi)\circ \Delta,\
\phi\star_T \psi=(\phi\otimes \psi)\circ \Delta,\
\phi\star_B \psi=(\phi\diamond \psi)\circ \Delta,$$
$$\phi\star_M \psi=(\phi\rhd \psi)\circ \Delta,\
\phi\star_{AM} \psi=(\phi\lhd \psi)\circ \Delta.$$
\end{enumerate}
\end{defi}

Let us insist on the following relations in $\mathcal{M}_n(\mathcal{A})$ (where $j_1,j_2:U_{n}^{\nc}\to \mathcal{A}$ are two quantum random variables on $U\langle n \rangle$):
\begin{align*}
(\delta(u_{ij}))_{1\leq i,j\leq n}&=I_n,\\
(j_1\circ \Sigma(u_{ij}))_{1\leq i,j\leq n}&=(j_1(u_{ij}))_{1\leq i,j\leq n}^{-1},\\
(j_1\star j_2(u_{ij}))_{1\leq i,j\leq n}&=(j_1(u_{ij}))_{1\leq i,j\leq n}\cdot (j_2(u_{ij}))_{1\leq i,j\leq n}.
\end{align*}

\subsection{How to build states on $U\langle n\rangle$?}\label{buildstate}

We expose now a general method for defining quantum random variables on $U\langle n\rangle$. Consider the noncommutative probability space $\mathcal{M}_n(\mathbb{C})$ composed of matrices of dimension $n$ equipped with its normalized trace $\tr_n:=\frac{1}{n}\Tr$. Let us denote by $E_{ij}$ the usual matricial units (ie, the matrix whose entries are zero, except for the $(i,j)$-th coefficient which is $1$).

Let $A$ be a random variable in a noncommutative space $(\mathcal{A},\phi)$. One way to consider $A$ as a matrix is to count $A$ as an element of $\mathcal{M}_n(\mathcal{A})\simeq \mathcal{A}\otimes\mathcal{M}_n(\mathbb{C})$. In this way, the {$(i,j)$-th block} of $A$ is just $\delta_{ij} A$. The starting point of our reflexion is the following: there is another way to consider $A$ as a matrix. Let us denote by $ E_{11}(\mathcal{A}\sqcup \mathcal{M}_n(\mathbb{C}))E_{11}$ the $*$-subalgebra $\{E_{11}XE_{11}:X\in \mathcal{A}\sqcup \mathcal{M}_n(\mathbb{C})\}\subset \mathcal{A}\sqcup \mathcal{M}_n(\mathbb{C})$. We have the $*$-isomorphism$$\begin{array}{rcl}\mathcal{A}\sqcup \mathcal{M}_n(\mathbb{C}) & \simeq & \Big(E_{11}(\mathcal{A}\sqcup \mathcal{M}_n(\mathbb{C}))E_{11}\Big) \otimes \mathcal{M}_n(\mathbb{C}) \\X & \mapsto & \displaystyle\sum_{1\leq i,j\leq n}E_{1i}XE_{j1}\otimes E_{ij}\\
\displaystyle\sum_{1\leq i,j\leq n}E_{i1}A_{ij}E_{1j} &  \mapsfrom & \displaystyle\sum_{1\leq i,j\leq n}A_{ij}\otimes E_{ij}.
\end{array}$$
It tells us that the $(i,j)$-th blocks of $A$ viewed as an element of $\mathcal{A}\sqcup \mathcal{M}_n(\mathbb{C})$ can be defined as $E_{1i}AE_{j1}\in E_{11}(\mathcal{A}\sqcup \mathcal{M}_n(\mathbb{C}))E_{11}$. We endow the $*$-algebra $E_{11}(\mathcal{A}\sqcup \mathcal{M}_n(\mathbb{C}))E_{11}$ with the state $n(\phi\ast \tr_n)$, where we recall that $\tr_n$ is the normalized trace on $\mathcal{M}_n(\mathbb{C})$.

\begin{propdef}
For all unitary random variable $U\in \mathcal{A}$, there exists a unique quantum random variable $j_U:U_{n}^{\nc}\rightarrow E_{11}(\mathcal{A}\sqcup \mathcal{M}_n(\mathbb{C}))E_{11}$ determined by $j_U(u_{ij})=E_{1i}UE_{j1}$,\label{haarstate} which induces a state $(n\ \phi\ast \tr_n)\circ j_U$ on $U\langle n \rangle$.
\end{propdef}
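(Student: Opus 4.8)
The statement to prove is Proposition-Definition~\ref{haarstate}: for a unitary $U\in\mathcal{A}$ there is a unique $*$-homomorphism $j_U:U_n^{\nc}\to E_{11}(\mathcal{A}\sqcup\mathcal{M}_n(\mathbb{C}))E_{11}$ with $j_U(u_{ij})=E_{1i}UE_{j1}$, and $(n\,\phi\ast\tr_n)\circ j_U$ is a state on $U\langle n\rangle$.

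\textbf{Existence and uniqueness of $j_U$.} By the universal property of $U_n^{\nc}$ recalled right after the definition of the unitary dual group (a unital $*$-homomorphism out of $U_n^{\nc}$ into $\mathcal{B}$ is the same thing as a unitary matrix in $\mathcal{M}_n(\mathcal{B})$), it suffices to check that the matrix $A=(A_{ij})_{i,j}$ with $A_{ij}=E_{1i}UE_{j1}\in E_{11}(\mathcal{A}\sqcup\mathcal{M}_n(\mathbb{C}))E_{11}=:\mathcal{B}$ is unitary in $\mathcal{M}_n(\mathcal{B})$. Here I use the identifications inside $\mathcal{A}\sqcup\mathcal{M}_n(\mathbb{C})$: $E_{ij}E_{k\ell}=\delta_{jk}E_{i\ell}$, $E_{ij}^*=E_{ji}$, and $U^*U=UU^*=1$. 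Then
\begin{align*}
\sum_{k}A_{ki}^*A_{kj}&=\sum_k (E_{1k}UE_{i1})^*(E_{1k}UE_{j1})=\sum_k E_{1i}U^*E_{k1}E_{1k}UE_{j1}\\
&=E_{1i}U^*\Big(\sum_k E_{kk}\Big)UE_{j1}=E_{1i}U^*UE_{j1}=E_{1i}E_{j1}=\delta_{ij}E_{11},
\end{align*}
which is the unit of $\mathcal{B}$; the computation of $\sum_k A_{ik}A_{jk}^*$ is identical using $UU^*=1$. So $A$ is unitary, $j_U$ exists and is unique, and $j_U(u_{ij})=A_{ij}$ lands in $\mathcal{B}$ as required. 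The only point needing a word of care is that $E_{11}$ really is the unit of the corner algebra $\mathcal{B}$ and that $\mathcal{B}$ is a genuine unital $*$-algebra; this is the standard compression/corner construction and follows from $E_{11}XE_{11}\cdot E_{11}YE_{11}=E_{11}(XE_{11}Y)E_{11}$ and $(E_{11}XE_{11})^*=E_{11}X^*E_{11}$.

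\textbf{The induced functional is a state.} It remains to see that $\psi:=(n\,\phi\ast\tr_n)\circ j_U$ is a state on $U_n^{\nc}$, i.e.\ positive and normalized. Normalization: $j_U(1)=E_{11}=1_{\mathcal{B}}$ and $n(\phi\ast\tr_n)(1_\mathcal{B})=n\cdot\tfrac1n=1$ once we know $\phi\ast\tr_n$, evaluated on the corner, is normalized so that the minimal projection $E_{11}$ has mass $1/n$ — this is exactly why the factor $n$ is inserted, and it follows from $(\phi\ast\tr_n)(E_{11})=\tr_n(E_{11})=1/n$ (the free product state restricts to $\tr_n$ on the $\mathcal{M}_n(\mathbb{C})$ leg). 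Positivity: $\phi\ast\tr_n$ is a state on $\mathcal{A}\sqcup\mathcal{M}_n(\mathbb{C})$ by the GNS construction mentioned after the definition of the five products. For $x\in U_n^{\nc}$, $j_U(x^*x)=j_U(x)^*j_U(x)$ since $j_U$ is a $*$-homomorphism, and $j_U(x)\in\mathcal{B}\subset\mathcal{A}\sqcup\mathcal{M}_n(\mathbb{C})$, so $(\phi\ast\tr_n)\big(j_U(x)^*j_U(x)\big)\geq0$; multiplying by $n>0$ preserves this. Hence $\psi$ is a state, completing the proof.

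\textbf{Main obstacle.} There is no serious obstacle here — the statement is essentially a packaging of the corner (compression) isomorphism already displayed in the text together with the universal property of $U_n^{\nc}$. The only place to be careful is bookkeeping of the matrix-unit relations inside the free product $\mathcal{A}\sqcup\mathcal{M}_n(\mathbb{C})$ (in particular that $\sum_k E_{kk}=1$ there, so that $U^*(\sum_k E_{kk})U=U^*U$), and making sure the normalizing constant $n$ is the right one so that $\psi(1)=1$; both are routine.
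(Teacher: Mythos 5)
Your proof is correct and follows essentially the same route as the paper: the paper's argument is exactly the computation $\sum_k E_{1i}U^*E_{k1}E_{1k}UE_{j1}=E_{1i}U^*I_nUE_{j1}=\delta_{ij}E_{11}$ showing unitarity of $(E_{1i}UE_{j1})_{i,j}$, combined with the universal property of $U_n^{\nc}$. Your additional checks (that $E_{11}$ is the unit of the corner algebra, and that the factor $n$ makes the functional normalized and positive) are correct and merely make explicit what the paper leaves implicit.
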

\begin{proof}
It follows from the unitarity of $(E_{1i}UE_{j1})_{1\leq i,j\leq n}$. Indeed, we have 
$$\sum_{k=1}^n E_{1i}U^*E_{k1}E_{1k}UE_{j1}
= E_{1i}U^*I_nUE_{j1}=\delta_{ij}
$$
and the same for the other relation.
\end{proof}
The elements $j_U(u_{ij})=E_{1i}UE_{j1}$ have to be considered as the $(i,j)$-th blocks of $U$, and, when there is no confusion, we will denote them by $U_{ij}$. The matrix $(U_{ij})_{1\leq i,j\leq n}$ seen as an element of
$E_{11}(\mathcal{A}\sqcup \mathcal{M}_n(\mathbb{C}))E_{11} \otimes \mathcal{M}_n(\mathbb{C})\simeq \mathcal{A}\sqcup \mathcal{M}_n(\mathbb{C})$ is exactly $U\in\mathcal{A}$ seen as an element of $\mathcal{A}\sqcup \mathcal{M}_n(\mathbb{C})$, which justifies this notation.
Remark that we have $(U_{ij})^*=(U^*)_{ji}$, and that the notation $U_{ij}^*$ is ambiguous.

\subsection{Free cumulants}
The compression of random variables by a family of matrix units has been considered in different situations, and it is possible to write explicitly the free cumulants of $U_{ij}$ in terms of those of $U$. Let us first introduce briefly this notion of cumulants (we refer the reader to the book \cite{Nica2006}).

Let $S$ be a totally ordered set. A partition of the set $S$ is said to have a crossing if there exist $i,j,k,l \in S$, with $i < j < k < l$, such that $i$ and $k$ belong to some block of the partition and $j$ and $l$ belong to another block. If a partition has no crossings, it is called non-crossing. The set of all non-crossing partitions of $S$ is denoted by $NC(S)$. When $S = \left\{1, \ldots , n\right\}$, with its natural order, we will use the notation $NC (n)$. It is a lattice with respect to the fineness relation defined as follows: for all $\pi_1$ and $\pi_2\in NC(S)$, $\pi_1 \preceq \pi_2$ if every block of $\pi_1$ is contained in a block of $\pi_2$.

\begin{defi}
The collection of free cumulants $\left(\kappa_q:\mathcal{A}^q\rightarrow\mathbb{C}\right)_{q\geq 1}$ on some probability space $(\mathcal{A},\phi)$ are defined via the following relations: for all $A_1,\ldots,A_n\in \mathcal{A}$,
$$\phi(A_1\ldots A_q)=\sum_{\sigma\in NC(q)}\hspace{-0.8cm}\prod_{\hspace{1cm}\{i_1\leq \ldots\leq i_k\}\in \sigma}\hspace{-0.8cm}\kappa_k(A_{i_1},\ldots,A_{i_k})$$
where $NC(q)$ is the set of non-crossing partitions of $\left\{1,\ldots,q\right\}$.
\end{defi}

The importance of the free cumulants is in large part due to the following characterization of freeness.
\begin{prop}
Let $\left(A_i \right)_{i\in I}$ be random variables of $(\mathcal{A},\phi)$. \label{freenesscar}They are $*$-free if and only if their mixed $*$-cumulants vanish. That is to say: for all $n\geq 0$, $\epsilon_1,\ldots , \epsilon_n$ be either $\emptyset$ or $\ast$, and all $A_{i(1)}, \ldots , A_{i(n)} \in \mathcal{A}$ such that $i(1),\ldots i(n)\in I$, whenever there exists some $j$ and $j' $ with $i(j)\neq i(j')$, we have $\kappa(A_{i(1)}^{\epsilon_1}, \ldots , A_{i(n)}^{\epsilon_n}) = 0$.
\end{prop}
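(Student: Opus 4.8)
The plan is to reduce the statement to Speicher's criterion for freeness and then prove that criterion by an induction on the number of arguments (a complete treatment is in~\cite{Nica2006}). Write $\mathcal{B}_i\subseteq\mathcal{A}$ for the unital $*$-subalgebra generated by $A_i$, so that the $A_i$ being $*$-free means exactly that the family $(\mathcal{B}_i)_{i\in I}$ is free. First I would prove the cleaner equivalence: $(\mathcal{B}_i)_{i\in I}$ is free if and only if $\kappa_n(b_1,\dots,b_n)=0$ for all $b_k\in\mathcal{B}_{i(k)}$ whenever the $i(k)$ are not all equal. The passage from this to the proposition as stated is by multilinearity together with the cumulant-with-products-as-entries formula of Krawczyk--Speicher: each $\mathcal{B}_i$ is linearly spanned by $1$ and by words in $A_i,A_i^*$, and a mixed cumulant of such words expands into a sum of terms $\kappa_\pi$; a term in which every block of $\pi$ meets only one ``colour'' would, after joining $\pi$ with the (monochromatic) partition grouping the factors of each word, remain monochromatic, hence could not equal the full partition of a tuple carrying at least two colours --- so the mixed cumulants of the generators and their adjoints vanish precisely when all mixed cumulants of the $\mathcal{B}_i$ do.

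Two ingredients I would record at the outset. First, $\kappa_m(b_1,\dots,b_m)=0$ whenever $m\geq2$ and some $b_k=1$, so that for $m\geq2$ each $b_k$ may be replaced by its centering $b_k-\phi(b_k)1$ without changing $\kappa_m$. Second, every non-crossing partition of $\{1,\dots,n\}$ has an \emph{interval block} $\{k,k+1,\dots,l\}$, proved by induction on $n$ by restricting to the indices strictly between two consecutive elements of a fixed block. For the ``if'' direction I would take centered $b_k\in\mathcal{B}_{i(k)}$ with $i(1)\neq i(2)\neq\cdots\neq i(n)$ and expand $\phi(b_1\cdots b_n)=\sum_{\pi\in NC(n)}\kappa_\pi(b_1,\dots,b_n)$ via the moment--cumulant relation; a term survives only if every block of $\pi$ is monochromatic (otherwise it carries a mixed cumulant, which vanishes by hypothesis) and of size at least $2$ (a singleton gives $\kappa_1(b_k)=\phi(b_k)=0$), but then the interval block of $\pi$ contains two consecutive indices of equal colour, contradicting the alternation. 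Hence $\phi(b_1\cdots b_n)=0$, which is freeness of $(\mathcal{B}_i)_{i\in I}$.

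For the ``only if'' direction I would fix a free family and show by induction on $n$ that every mixed cumulant $\kappa_n(b_1,\dots,b_n)$ vanishes. If two consecutive colours agree, say $i(k)=i(k+1)$, I apply the Krawczyk--Speicher formula to $\kappa_{n-1}(b_1,\dots,b_kb_{k+1},\dots,b_n)$: its right-hand side is $\kappa_n(b_1,\dots,b_n)$ (from the full partition) plus a sum of terms $\kappa_\pi$ whose blocks all have size $<n$; by the induction hypothesis such a $\kappa_\pi$ can be nonzero only if all blocks of $\pi$ are monochromatic, but then the join of $\pi$ with the monochromatic partition grouping $\{k,k+1\}$ is again monochromatic and cannot be the full partition of a tuple with at least two colours, so that term also vanishes --- leaving $\kappa_{n-1}(b_1,\dots,b_kb_{k+1},\dots,b_n)=\kappa_n(b_1,\dots,b_n)$, and the left-hand side is $0$ by induction as a mixed cumulant of lower order. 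If instead $i(1)\neq i(2)\neq\cdots\neq i(n)$, I center (legitimate for $n\geq2$) and expand $\phi(b_1\cdots b_n)$ by the moment--cumulant relation; by the induction hypothesis and centering only $\pi=1_n$ contributes, so $\phi(b_1\cdots b_n)=\kappa_n(b_1,\dots,b_n)$, while $\phi(b_1\cdots b_n)=0$ by freeness. I expect the main obstacle to be exactly the Krawczyk--Speicher cumulant-with-products formula and the accompanying bookkeeping of which partitions in these sums are killed by the induction hypothesis; everything else reduces to the interval-block lemma, the vanishing of cumulants on constants, and multilinearity.
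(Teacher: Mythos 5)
Your proposal is correct and complete in outline: it is the standard Nica--Speicher argument (vanishing of cumulants with a $1$ entry, centering, the interval-block lemma, the Krawczyk--Speicher formula for cumulants with products as entries, and induction on the order), which is precisely the proof the paper itself defers to, since it states this proposition without proof as a known result from \cite{Nica2006}. No gaps.
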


We are now ready to express the free cumulants of $U_{ij}=j_U(u_{ij})=E_{1i}UE_{j1}$ as defined in Proposition-Definition~\ref{haarstate} in terms of the free cumulants of $U\in \mathcal{A}$.

\begin{prop}[Theorem 14.18 of~\cite{Nica2006}]\label{cyclic}Let $U^{(1)},\ldots, U^{(m)}$ be unitary random variables of $(\mathcal{A},\phi)$. The free cumulants of $(U^{(k)})_{ij}=j_{U^{(k)}}(u_{ij})$ in the noncommutative probability space $\Big(E_{11}(\mathcal{A}\sqcup \mathcal{M}_n(\mathbb{C}))E_{11},n(\phi\ast \tr_n)\Big)$ are given as follows. Let $1\leq i_1,j_1,\ldots,i_q,j_q\leq n$ and $1\leq m_1,\ldots, m_q\leq m$.
If the indices are cyclic, i.e. if $i_l=j_{l-1}$ for $2\leq l\leq q$ and $i_1=j_q$, we have
\begin{eqnarray*}
\kappa_q\Big((U^{(m_1)})_{i_1j_1},\ldots,(U^{(m_q)})_{i_qj_q}\Big)&=&n\ \kappa_q\Big(\frac{1}{n}U^{(m_1)},\ldots,\frac{1}{n}U^{(m_q)}\Big).
\end{eqnarray*} 
If the indices are not cyclic, the left handside is equal to zero.
\end{prop}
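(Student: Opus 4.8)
The plan is to compress $U$ by the matrix units $E_{11}, \ldots$ inside the free product $\mathcal{A}\sqcup \mathcal{M}_n(\mathbb{C})$ equipped with the free product state $\phi\ast\tr_n$, and to invoke the formula for free cumulants of compressions by a family of matrix units. First I would recall the general setup behind Theorem~14.18 of~\cite{Nica2006}: if $(\mathcal{B},\psi)$ is a noncommutative probability space containing a unital copy of $\mathcal{M}_n(\mathbb{C})$ which is free from some subalgebra, then for any element $b$ of that subalgebra the compressed elements $E_{1i} b E_{j1} \in E_{11}\mathcal{B}E_{11}$ have free cumulants (with respect to the compressed state $n\,\psi(E_{11}\cdot)$ on $E_{11}\mathcal{B}E_{11}$) given by a weighted sum over the matrix-unit indices, and the only surviving term is the cyclic one, with weight a power of $n$. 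Here $\mathcal{B} = \mathcal{A}\sqcup\mathcal{M}_n(\mathbb{C})$, $\psi = \phi\ast\tr_n$, and the subalgebra is the left leg $\mathcal{A}$, which is free from $\mathcal{M}_n(\mathbb{C})$ by the very definition of the free product state. So $U^{(1)},\ldots,U^{(m)}\in\mathcal{A}$ are free from the matrix units, and the hypotheses of the cited theorem are met.

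The key steps, in order, would be: (1) identify the compressed state $n(\phi\ast\tr_n)$ on $E_{11}\mathcal{B}E_{11}$ with the compression-by-$E_{11}$ construction appearing in Chapter~14 of~\cite{Nica2006}, so that the notation matches; (2) apply the cited theorem to each product $(U^{(m_1)})_{i_1j_1}\cdots(U^{(m_q)})_{i_qj_q} = E_{1i_1}U^{(m_1)}E_{j_1 1}E_{1i_2}U^{(m_2)}E_{j_2 1}\cdots$; note that $E_{j_l 1}E_{1 i_{l+1}} = \delta_{j_l i_{l+1}} E_{j_l i_{l+1}}$ already forces the "chaining" condition $i_{l+1}=j_l$, and the closing factor $E_{j_q 1}E_{1 i_1}$ forces $i_1 = j_q$; otherwise the product is already zero and so are all its cumulants; (3) in the cyclic case, read off from the theorem that $\kappa_q((U^{(m_1)})_{i_1j_1},\ldots,(U^{(m_q)})_{i_qj_q})$ equals the free cumulant $\kappa_q(U^{(m_1)},\ldots,U^{(m_q)})$ computed in $(\mathcal{A},\phi)$ times the combinatorial weight; (4) track the normalization constants carefully: there is one factor of $1/n$ per argument coming from the $\tr_n$ appearing in the compression, an overall factor of $n$ from the definition of the compressed state, and the identity $\kappa_q(\lambda A_1,\ldots,\lambda A_q)=\lambda^q\kappa_q(A_1,\ldots,A_q)$ lets one write the answer as $n\,\kappa_q(\tfrac1n U^{(m_1)},\ldots,\tfrac1n U^{(m_q)})$, which is the claimed form.

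The main obstacle I anticipate is bookkeeping with the powers of $n$ and making sure the state $n(\phi\ast\tr_n)$ on $E_{11}(\mathcal{A}\sqcup\mathcal{M}_n(\mathbb{C}))E_{11}$ is exactly normalized so that it is a state (i.e.\ $n(\phi\ast\tr_n)(E_{11})=n\tr_n(E_{11})=1$) and so that the cumulant rescaling comes out with the homogeneous weight $n$ rather than some other power. A secondary subtlety is that the cited theorem is usually phrased for a single element $b$; to handle $m$ different unitaries $U^{(1)},\ldots,U^{(m)}$ simultaneously one should apply it to the algebra they jointly generate inside $\mathcal{A}$, which is still free from $\mathcal{M}_n(\mathbb{C})$, so no mixed-cumulant issue with the matrix units arises. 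Once these normalization points are pinned down, the statement is an immediate specialization of Theorem~14.18 of~\cite{Nica2006}, and in fact the Proposition is stated as such a citation, so the "proof" is essentially the verification that our compression construction in Proposition-Definition~\ref{haarstate} is the one to which that theorem applies.
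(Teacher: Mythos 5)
Your overall strategy is exactly the paper's: the proposition is stated as a direct citation of Theorem~14.18 of~\cite{Nica2006}, and the only work is to check that the compression $j_U(u_{ij})=E_{1i}UE_{j1}$ of Proposition-Definition~\ref{haarstate}, together with the state $n(\phi\ast\tr_n)$ on $E_{11}(\mathcal{A}\sqcup\mathcal{M}_n(\mathbb{C}))E_{11}$, is precisely the compression-by-free-matrix-units setup of that theorem (the matrix units being free from $\mathcal{A}$ by definition of the free product state, and the normalization $n(\phi\ast\tr_n)(E_{11})=1$ being the right one). Your steps (1), (3), (4) and the remark about handling several unitaries at once are all correct. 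One point in step (2) is wrong, however: $E_{j_l1}E_{1i_{l+1}}=E_{j_l i_{l+1}}$ for \emph{all} indices (the inner indices of the two matrix units are both $1$, so no Kronecker delta appears), so the product $(U^{(m_1)})_{i_1j_1}\cdots(U^{(m_q)})_{i_qj_q}$ does \emph{not} vanish when the indices fail to chain, and there is no ``closing factor'' inside a cumulant. The vanishing in the non-cyclic case is genuine content of Theorem~14.18: it comes from the freeness of $\{E_{ij}\}$ from $\mathcal{A}$ together with $\phi\ast\tr_n(E_{j_li_{l+1}})=\delta_{j_li_{l+1}}/n$, not from an algebraic triviality. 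Since you invoke the cited theorem anyway, this does not break the proof, but the shortcut justification should be dropped.
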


Let us mention two basic properties about the quantum random variables $j_U:U\langle n \rangle\to E_{11}(\mathcal{A}\sqcup \mathcal{M}_n(\mathbb{C}))E_{11}$ defined in Definition~\ref{haarstate}.

\begin{prop}\label{freeness}Let $U,V\in \mathcal{A}$ be two unitary variables of $(\mathcal{A},\phi)$.
\begin{enumerate}
\item We have $j_{U^{-1}}=j_U\circ \Sigma$ and $j_{UV}=j_U\star j_V$.
\item If $U$ and $V$ are $*$-free, then, the image $*$-algebras of $j_U$ and $j_V$ are $*$-free in the noncommutative space $\left(E_{11}(\mathcal{A}\sqcup \mathcal{M}_n(\mathbb{C}))E_{11},n(\phi\ast \tr_n)\right)$.
\end{enumerate}
\end{prop}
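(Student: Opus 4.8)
The plan is to prove the two assertions of Proposition~\ref{freeness} separately, both by reducing to the defining relations of $U\langle n\rangle$ and the combinatorial description of free cumulants in Proposition~\ref{cyclic}. For part (1), the identity $j_{U^{-1}}=j_U\circ\Sigma$ is a computation on generators: since $j_U\circ\Sigma(u_{ij})=j_U(u_{ji}^*)=(E_{1j}UE_{i1})^*=E_{1i}U^*E_{j1}=E_{1i}U^{-1}E_{j1}=j_{U^{-1}}(u_{ij})$, the two $*$-homomorphisms agree on the generators of $U_n^{\nc}$ and hence coincide. For the identity $j_{UV}=j_U\star j_V$, recall that by definition $j_U\star j_V=(j_U\sqcup j_V)\circ\Delta$, and $\Delta(u_{ij})=\sum_k u_{ik}^{(1)}u_{kj}^{(2)}$; applying $j_U\sqcup j_V$ gives $\sum_k (E_{1i}UE_{k1})(E_{1k}VE_{j1})=E_{1i}U\big(\sum_k E_{k1}E_{1k}\big)VE_{j1}=E_{1i}UVE_{j1}=j_{UV}(u_{ij})$, using $\sum_k E_{k1}E_{1k}=\sum_k E_{kk}=I_n$. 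Again the two $*$-homomorphisms agree on generators, so they are equal. Here one must be a little careful that $j_U$ and $j_V$, a priori defined into $E_{11}(\mathcal{A}\sqcup\mathcal{M}_n(\C))E_{11}$, are being composed after the canonical inclusions into $E_{11}\big((\mathcal{A}\sqcup\mathcal{A})\sqcup\mathcal{M}_n(\C)\big)E_{11}$ coming from $i_1,i_2:\mathcal{A}\to\mathcal{A}\sqcup\mathcal{A}$, and that the matrix units $E_{k1},E_{1k}$ appearing in $j_U$ and in $j_V$ are genuinely the same copy of $\mathcal{M}_n(\C)$; this is exactly the point of using the free product $\mathcal{A}\sqcup\mathcal{M}_n(\C)$ with a single matrix leg, and it lets the telescoping $\sum_k E_{k1}E_{1k}=I_n$ go through.

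For part (2), I would use the cumulant characterization of freeness in Proposition~\ref{freenesscar}: the image $*$-algebras of $j_U$ and $j_V$ are $*$-free in $\big(E_{11}(\mathcal{A}\sqcup\mathcal{M}_n(\C))E_{11},\,n(\phi\ast\tr_n)\big)$ if and only if all mixed $*$-cumulants of the families $\{(U)_{ij}\}_{i,j}$ and $\{(V)_{ij}\}_{i,j}$ vanish. Now apply Proposition~\ref{cyclic} with $m=2$, $U^{(1)}=U$, $U^{(2)}=V$: a cumulant $\kappa_q\big((W^{(m_1)})_{i_1 j_1},\dots,(W^{(m_q)})_{i_q j_q}\big)$ (with $W^{(1)}=U$, $W^{(2)}=V$, each entry possibly starred, which we absorb by replacing $U$ or $V$ with its adjoint) is zero unless the indices are cyclic, in which case it equals $n\,\kappa_q\big(\tfrac1n W^{(m_1)},\dots,\tfrac1n W^{(m_q)}\big)=n^{1-q}\kappa_q\big(W^{(m_1)},\dots,W^{(m_q)}\big)$. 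Since $U$ and $V$ are $*$-free in $(\mathcal{A},\phi)$, by Proposition~\ref{freenesscar} any cumulant $\kappa_q$ of a tuple of $U^{\pm},V^{\pm}$ that mixes a $U$-letter with a $V$-letter vanishes. Hence as soon as $\{m_1,\dots,m_q\}$ is not constant the right-hand side is $0$, so the corresponding mixed cumulant of the compressed variables is $0$ as well; and this is precisely the vanishing of mixed $*$-cumulants required by Proposition~\ref{freenesscar}.

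The only genuinely delicate point is bookkeeping rather than mathematics: in Proposition~\ref{cyclic} the statement is phrased for cumulants of the entries $(U^{(k)})_{ij}$ themselves, and one needs adjoints to appear. Because $\big((U)_{ij}\big)^*=(U^*)_{ji}$ (noted in the text after Proposition-Definition~\ref{haarstate}), a starred entry of $U$ is literally an (unstarred) entry of the unitary $U^*$, so applying Proposition~\ref{cyclic} to the enlarged family $U^{(1)}=U,\ U^{(2)}=U^*,\ U^{(3)}=V,\ U^{(4)}=V^*$ covers all mixed $*$-cumulants at once; $*$-freeness of $U$ and $V$ in $(\mathcal{A},\phi)$ means by definition that the joint $*$-cumulants of $\{U,U^*\}$ and $\{V,V^*\}$ do not mix, which is exactly what is needed to kill the right-hand side. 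I expect this index-and-adjoint juggling to be the main — and essentially the only — obstacle; once it is set up cleanly, both parts are short.
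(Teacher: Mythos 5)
Your proposal is correct and follows essentially the same route as the paper: part (1) is the generator computation via $\sum_k E_{k1}E_{1k}=I_n$ and the unitarity $U^*=U^{-1}$, and part (2) combines Proposition~\ref{cyclic} (applied to the family $U,U^*,V,V^*$, using $\bigl((U)_{ij}\bigr)^*=(U^*)_{ji}$) with the vanishing-of-mixed-cumulants criterion of Proposition~\ref{freenesscar}. The paper states this in two lines; you have simply filled in the same details.
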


\begin{proof}The first property follows from the relation $j_{UV}(u_{ij})=\sum_kj_U(u_{ik})j_V(u_{kj})$. The second one follows from Proposition~\ref{cyclic} and the characterization of freeness of Proposition~\ref{freenesscar}.
\end{proof}

\section{Haar state on the unitary dual group}\label{Haarstate}
In this section, we will investigate the existence of the Haar state on $U\langle n\rangle$ for the five different convolutions. Unfortunately, the definition of a Haar state on $U\langle n\rangle$ is too strong, and we need to define a weaker notion of Haar state, namely the notion of Haar trace, to have some existence results.

\begin{defi}The free (resp. tensor independent, boolean, monotone, anti-monotone) \emph{Haar state} on $U\langle n\rangle$, if it exists, is the unique state $h$ on $U_{n}^{\nc}$ such that, for all other state $\phi$ on $U_{n}^{\nc}$, we have $\phi\star_F h=h=h\star_F\phi$ (resp. the same relation for $\star_T$, $\star_B$, $\star_M$ or $\star_{AM}$).
\end{defi}
\begin{thm}\label{Haarstatetheorem}\begin{enumerate}
\item The Haar measure on $\{z\in \mathbb{C}:|z|=1\}$ is the Haar state for the free, tensor independent, boolean, monotone and anti-monotone convolution on $U\langle1\rangle$.
\item For all $n\geq 2$, there exists no Haar state on $U\langle n\rangle$ for the free, tensor independent, boolean, monotone or anti-monotone convolution.
\end{enumerate}
\end{thm}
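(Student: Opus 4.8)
The plan is to handle $n=1$ directly and, for $n\ge 2$, to reduce the non-existence of a Haar state first to a degree-one statement and then to a single degree-two computation carried out for each of the five product states. For $n=1$ we have $U_1^{\nc}=\C[u,u^{*}]$ with $u$ unitary and $\Delta(u^{k})=(u^{(1)}u^{(2)})^{k}$ for $k\ge 1$. Writing $h$ for the Haar measure on the circle ($h(u^{k})=\delta_{k,0}$), it suffices to check $\phi\star_X h=h=h\star_X\phi$ on each $u^{k}$: $k=0$ is trivial, $k\le 0$ follows from $k\ge 1$ since $\Delta$ and all states are $*$-preserving, and for $k\ge 1$ one expands $(u^{(1)}u^{(2)})^{k}$ after substituting $u^{(1)}=\phi(u)\,1+a$ with $a\in\ker\phi$ and collects consecutive powers of $u^{(2)}$; every surviving monomial is an alternating word that still contains a leg-two letter $(u^{(2)})^{c}$ with $c\ge 1$, and since $h((u^{(2)})^{c})=h(u^{c})=0$ for $c\ge 1$ each of the five product-state formulas returns $0$. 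The equality $h\star_X\phi=h$ is symmetric, centering the leg-one factors by $h$.

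Now let $n\ge 2$ and suppose $h$ is a Haar state for one of the five convolutions $\star_X$. The degree-one step is the identity $M(\phi\star_X\psi)=M(\phi)M(\psi)$, where $M(\omega):=(\omega(u_{ij}))_{i,j}$: expanding $\Delta(u_{ij})=\sum_k u_{ik}^{(1)}u_{kj}^{(2)}$ with respect to $U_n^{\nc}=\C\oplus\ker\delta$, only the scalar term, the two one-letter terms and the single two-letter term contribute to any of the five product states, and they recombine to $(\phi\star_X\psi)(u_{ij})=\sum_k\phi(u_{ik})\psi(u_{kj})$. For $g\in U(n)$ let $\chi_g\colon U_n^{\nc}\to\C$ denote the character $u_{ij}\mapsto g_{ij}$, which is a state since $\Hom_{\Alg}(U_n^{\nc},\C)\simeq U(n)$. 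Applying the Haar property with $\phi=\chi_g$ forces $M(h)=g\,M(h)$ for all $g$, whence $M(h)=0$, i.e.\ $h(u_{ij})=0$ for all $i,j$.

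The degree-two step uses the monomial $u_{11}^{*}u_{11}$, with $\Delta(u_{11}^{*}u_{11})=\sum_{k,l}u_{l1}^{(2)*}\big(u_{1l}^{(1)*}u_{1k}^{(1)}\big)u_{k1}^{(2)}$. For the free, tensor and anti-monotone convolutions one computes, using $h(u_{ij})=0$, that for every state $\phi$
\[
(\phi\star_X h)(u_{11}^{*}u_{11})=\sum_{k,l}\phi(u_{1l}^{*}u_{1k})\,h(u_{l1}^{*}u_{k1}).
\]
Put $H:=(h(u_{l1}^{*}u_{k1}))_{l,k}$; it is positive with $\tr H=h\big(\sum_l u_{l1}^{*}u_{l1}\big)=h(1)=1$ by the relation $\sum_l u_{l1}^{*}u_{l1}=1$. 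Taking $\phi=\chi_g$ turns the Haar identity into $v^{*}Hv=h(u_{11}^{*}u_{11})=H_{11}$, where $v$ is the first row of $g$; as $g$ varies over $U(n)$, $v$ varies over all unit vectors, so $H=\tfrac1n I_n$. Feeding this back with an arbitrary state $\phi$ yields $\phi\big(\sum_l u_{1l}^{*}u_{1l}\big)=1$ for every state, hence $\sum_l u_{1l}^{*}u_{1l}=1$ in $U_n^{\nc}$. This is false: for a non-surjective isometry $V$ in some $*$-algebra, the $n\times n$ unitary $w=\Diag\big(\left(\begin{smallmatrix}V & 1-VV^{*}\\ 0 & V^{*}\end{smallmatrix}\right),\,I_{n-2}\big)$ over it satisfies $\sum_l w_{1l}^{*}w_{1l}=1+(1-VV^{*})\ne 1$, so some state of $U_n^{\nc}$ does not annihilate $\sum_l u_{1l}^{*}u_{1l}-1$, a contradiction.

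For the boolean and monotone convolutions (which depend on the chosen complement, so the computation is done with respect to $\ker\delta$) the same monomial works in the \emph{other} convolution order: one finds
\[
(h\star_X\phi)(u_{11}^{*}u_{11})=\phi(u_{11}^{*}u_{11})+F\big(M(\phi)\big),
\]
the point being that the only contribution involving the degree-two quantity $\phi(u_{11}^{*}u_{11})$ comes, with coefficient $1$, from the single normal-form letter $\big(u_{11}^{*}-1\big)^{(2)}\big(u_{11}-1\big)^{(2)}$, while every other term depends on $\phi$ only through $M(\phi)$ (and on the fixed $h$). The Haar property would then force $\phi\mapsto\phi(u_{11}^{*}u_{11})$ to be a function of $M(\phi)$ alone; but for $n\ge 2$ it is not, since $\phi=\tfrac12(\chi_{I_n}+\chi_{-I_n})$ and $\phi'=\tfrac12(\chi_{g}+\chi_{-g})$ with $g$ the permutation matrix of a transposition moving $1$ both have $M=0$ while $\phi(u_{11}^{*}u_{11})=1\ne 0=\phi'(u_{11}^{*}u_{11})$ — a contradiction. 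The step I expect to be the main obstacle is precisely this degree-two bookkeeping: pushing the normal-form expansion of $\Delta(u_{11}^{*}u_{11})$ through each of the five product-state formulas while correctly merging adjacent generators of the same leg into single letters, and in the boolean and monotone cases isolating the coefficient of $\phi(u_{11}^{*}u_{11})$; once that is in place, the two contradictions above are immediate, and one should also check (to remain consistent with the case $n=1$) that the arguments genuinely use $n\ge 2$, through the non-triviality of $\sum_l u_{1l}^{*}u_{1l}-1$ and of the map $\phi\mapsto\phi(u_{11}^{*}u_{11})$.
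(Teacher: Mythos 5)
There is a genuine gap in your proof of item (1), but only for the boolean, monotone and anti-monotone convolutions. These three product states are defined relative to the \emph{fixed} decomposition $U_{1}^{\nc}=\mathbb{C}1\oplus\ker(\delta)$: the formulas $\diamond(A_1\cdots A_m)=\prod_i\phi_{\epsilon_i}(A_i)$, $\rhd(\cdots)$, $\lhd(\cdots)$ apply to alternating words whose letters lie in the distinguished $*$-subalgebra $\ker(\delta)$. Your leg-two letters $(u^{(2)})^{c}$ satisfy $\delta(u^{c})=1\neq 0$, and $a=u-\phi(u)1$ need not lie in $\ker(\delta)$ either (nor can $\ker\phi$ or $\ker h$ serve as alternative complements, since they are not subalgebras), so the product formulas simply do not apply to your monomials. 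The shortcut is legitimate only for the free and tensor products, which are decomposition-independent. If one recenters correctly with $v=u-1\in\ker(\delta)$, the individual factors become $h(v^{c})=(-1)^{c}\neq 0$, so $\phi\star_B h(u^{k})=0$ is a nontrivial cancellation rather than a term-by-term vanishing. The conclusion is of course true; the efficient repair is the one the paper uses, namely the $K$-transform identities for the multiplicative boolean, monotone and anti-monotone convolutions on the circle together with $K_h=0$.

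Your proof of item (2) is correct and takes a genuinely different route from the paper's. I checked the degree-two bookkeeping you flag: for the word $B^{(2)}A^{(1)}C^{(2)}$ with $A=u_{1l}^{*}u_{1k}$, $B=u_{l1}^{*}$, $C=u_{k1}$, the free, tensor and anti-monotone products all give $\phi(A)h(BC)$, while the boolean and monotone products in the order $h\star_X\phi$ give $\delta(A)\phi(BC)+h\bigl(A-\delta(A)\bigr)\phi(B)\phi(C)$; these are exactly your two identities. Your first contradiction then rests on the fact that $\sum_{l}u_{1l}^{*}u_{1l}\neq 1$ in $U_{n}^{\nc}$ for $n\geq 2$ (your isometry dilation $w$ is indeed unitary and witnesses this), and your second on the fact that $\phi\mapsto\phi(u_{11}^{*}u_{11})$ does not factor through the first-order moments. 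The paper argues differently for free and tensor: it evaluates against permutation-block states $\phi_k$ with $\phi_k(u_{ik}u_{jk}^{*})=0$, obtains $h(u_{ik}u_{ik}^{*})=0$ for every $k$, and contradicts the genuine relation $\sum_{k}u_{ik}u_{ik}^{*}=1$; for the other three it derives the matrix identity $M_h=(M_h-I_{n^2})(\bar{N}_\phi\otimes N_\phi)+M_\phi$ and tests it on two atomic states with the same first-order moments, which is essentially your second mechanism. What your route buys is a single positivity argument ($H=\tfrac{1}{n}I_n$) covering free, tensor and anti-monotone uniformly; what it costs is the appeal to an infinite-dimensional representation in which $\sum_l u_{1l}^{*}u_{1l}\neq 1$, where the paper stays within finite-dimensional states throughout. (Your preliminary reduction $M(\phi\star_X\psi)=M(\phi)M(\psi)$ and $M(h)=0$ is correct but, as your own computations show, not actually needed.)
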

\begin{proof}In Section~\ref{caseone}, we prove the first item. In Section~\ref{casetwo}, we prove the second item for the free and the tensor convolution. In Section~\ref{casebool}, we prove the second item  for the boolean convolution, and finally, in Section~\ref{casemono}, we prove the second item for the monotone and anti-monotone convolution.
\end{proof}
Let us define a weaker notion of Haar state. A state $\phi$ on $U_{n}^{\nc}$ is called a \emph{tracial state}, or a \emph{trace}, if, for all $a,b\in U_{n}^{\nc}$, we have $\phi(ab)=\phi(ba)$.
\begin{defi}The free (resp. tensor independent, boolean, monotone, anti-monotone) \emph{Haar trace} on $U\langle n\rangle$, if it exists, is the unique tracial state $h$ on $U_{n}^{\nc}$ such that, for all other tracial state $\phi$ on $U_{n}^{\nc}$, we have $\phi\star_F h=h=h\star_F\phi$ (resp. the same relation for $\star_T$, $\star_B$, $\star_M$ or $\star_{AM}$).
\end{defi}
Remark that a Haar state which is tracial is automatically a Haar trace.


\begin{thm}\label{Haartracetheorem}\begin{enumerate}\item For all $n\geq 2$, there exist no Haar trace on $U\langle n\rangle$ for the boolean, monotone or anti-monotone convolution.
\item For all $n\geq 1$,  there exist a Haar trace on $U\langle n\rangle$ for the free convolution, and a Haar trace on $U\langle n\rangle$ for the tensor convolution.
\end{enumerate}
\end{thm}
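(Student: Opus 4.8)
The plan is to treat the two items separately: for item~(2) I would exhibit explicit candidates and verify the absorbing property, and for item~(1) I would argue by contradiction.

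\emph{Free Haar trace.} I would take a Haar unitary $W$ in some tracial noncommutative probability space $(\mathcal{A},\phi)$ and set $h_F:=(n\,\phi\ast\tr_n)\circ j_W$, the law of the blocks of $W$ in the sense of Proposition-Definition~\ref{haarstate}. That $h_F$ is a tracial state is immediate: $\phi\ast\tr_n$ is a trace on $\mathcal{A}\sqcup\mathcal{M}_n(\mathbb{C})$, its normalized compression by the projection $E_{11}$ (the normalization being exactly the factor $n$) is again a trace, and precomposing with the $\ast$-homomorphism $j_W$ preserves traciality. For absorption I would argue combinatorially. By Proposition~\ref{cyclic}, applied to the pair $(W,W^\ast)$, the free cumulants of $(u_{ij},u_{ij}^\ast)$ under $h_F$ vanish unless the indices are cyclic, and otherwise are rescalings of cumulants of a Haar unitary, hence supported on even orders with alternating adjoints. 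Given a trace $\psi$, I would realize the joint law $\psi\ast h_F$ on $U_{n}^{\nc}\sqcup U_{n}^{\nc}$ with first leg the GNS family $(V_{ij})$ of $\psi$ and second leg $(W_{ij})$, the two families being $\ast$-free; then $\psi\star_F h_F$ is the law of the matrix product $\bigl(\sum_k V_{ik}W_{kj}\bigr)_{ij}$. Expanding its free cumulants with the formula for cumulants of products of $\ast$-free variables together with the vanishing of mixed cumulants (Proposition~\ref{freenesscar}), the cyclic vanishing of the $W$-cumulants forces every occurrence of a $\psi$-cumulant to drop out, so one recovers exactly the cumulants of $h_F$; the case $h_F\star_F\psi$ is symmetric. (When $\psi$ is itself the law of the blocks of a unitary $V$ this is transparent: $\psi\star_F h_F=(n\,(\cdot)\ast\tr_n)\circ j_{VW}$ by Proposition~\ref{freeness}, and $VW$ is again a Haar unitary free from $\mathcal{M}_n(\mathbb{C})$ since $W$ is; the role of the cumulant computation is that $\star_F h_F$ only sees the ``cyclic part'' of the cumulants, so the general case reduces to this one.)

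\emph{Tensor Haar trace.} Here I would take $h_T$ to be the pullback, under the abelianization $U_{n}^{\nc}\to U_{n}$, of the Haar probability measure on the classical group $U(n)$; it is a tracial state because it factors through the commutative algebra $U_{n}$. For $g\in U(n)$ let $R_g$, resp. $L_g$, denote the $\ast$-automorphism of $U_{n}^{\nc}$ determined by $u_{ij}\mapsto(ug)_{ij}$, resp. $u_{ij}\mapsto(gu)_{ij}$. A direct computation with $\Delta$ gives, for every state $\psi$,
$$\psi\star_T h_T=\int_{U(n)}\psi\circ R_g\,\diff g,\qquad h_T\star_T\psi=\int_{U(n)}\psi\circ L_g\,\diff g .$$
When $\psi$ is a trace the right-hand sides are tracial states, invariant under all $R_g$, resp. all $L_g$; so the whole matter reduces to showing that $h_T$ is the \emph{unique} $R$-invariant (resp. $L$-invariant) tracial state on $U_{n}^{\nc}$. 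Invariance under all $R_g$ forces the value on a word of bidegree $(a,b)$ in the generators to vanish when $a\neq b$ and, when $a=b$, to be a combination of the $\delta$-pairing tensors provided by the first fundamental theorem of invariant theory for $U(n)$; the coefficients of those tensors are then pinned down, uniquely, by the relations $\sum_k u_{ik}u_{jk}^\ast=\delta_{ij}=\sum_k u_{ki}^\ast u_{kj}$ together with traciality (traciality is genuinely needed here, as it must be, since $\star_T$ admits no Haar \emph{state} for $n\geq2$). As $h_T$ satisfies all these constraints, it is the unique such state and absorption follows.

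\emph{Non-existence for the other three convolutions.} Suppose, for $n\geq 2$, that a boolean (resp. monotone, anti-monotone) Haar trace $h$ existed. The counit $\delta$ is a trace, and so is every character $U_{n}^{\nc}\to\mathbb{C}$, i.e. evaluation at a matrix of $U(n)$; feeding such states into $\phi\star_\bullet h=h=h\star_\bullet\phi$ and computing with the boolean (resp. monotone) product produces constraints on $h$ that are already incompatible for $n\geq 2$. This is the obstruction used to rule out Haar states in Theorem~\ref{Haarstatetheorem}; the only extra observation needed is that the obstructing states there may be taken tracial, so the same reasoning excludes a Haar trace.

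I expect the main obstacle to be twofold. On the free side it is the cumulant-of-products bookkeeping: ensuring that the cyclic vanishing of the $h_F$-cumulants genuinely annihilates every appearance of a $\psi$-cumulant, for every word and every trace $\psi$. On the tensor side it is the invariant-theoretic uniqueness — checking that the defining relations of $U_{n}^{\nc}$ together with traciality leave no freedom in the pairing-tensor coefficients, including in the delicate regime where $n$ is smaller than the length of the word.
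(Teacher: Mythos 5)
Your candidate for the tensor Haar trace is wrong, and this is the decisive gap. The pullback $h_{cl}$ of the classical Haar measure under the abelianization $U_{n}^{\nc}\to U_{n}$ is not absorbing among tracial states for $\star_T$ when $n\geq 2$. Your formula $\psi\star_T h_{cl}=\int_{U(n)}\psi\circ R_g\,\diff g$ is correct, but the uniqueness you then need --- that $h_{cl}$ is the only $R$-invariant tracial state --- is false: the free Haar trace and the genuine tensor Haar trace are also $R$-invariant traces (each absorbs the tracial characters $\delta_g$, and $\psi\star\delta_g=\psi\circ R_g$ for every product state), yet they differ from $h_{cl}$. Concretely, set $Q=\sum_k u_{1k}^*u_{1k}$, which is \emph{not} $1$ in $U_{n}^{\nc}$ (only $\sum_k u_{ki}^*u_{kj}=\delta_{ij}$ and $\sum_k u_{ik}u_{jk}^*=\delta_{ij}$ hold). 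The Weingarten expansion of the second leg gives, for every trace $\psi$,
$$\psi\star_T h_{cl}(u_{11}u_{11}^*u_{11}u_{11}^*)=\frac{1+\psi(Q^2)}{n(n+1)},\qquad h_{cl}(u_{11}u_{11}^*u_{11}u_{11}^*)=\frac{2}{n(n+1)},$$
where the term $\psi(Q^2)$ arises from the pairing $k_1=k_4$, $k_2=k_3$ after a cyclic rotation inside $\psi$. Absorption would force $\psi(Q^2)=1$ for every trace, which fails: the paper's tensor Haar trace (a genuinely noncommutative state, realized by shift operators on $\ell^2(\Z)\otimes\bigotimes_{k\in\Z}\mathcal{M}_n(\C)$, with $h(u_{11}u_{11}^*u_{11}u_{11}^*)=1/n\neq 2/(n(n+1))$) has $\psi(Q^2)=n$. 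The moral is that $\star_T$ on the dual group is not classical convolution: the first leg retains the full noncommutative word, and traces that do not factor through the abelianization are not governed by invariant theory for $U(n)$. So the existence statement in the tensor case cannot be proved with your candidate; one must produce a noncommutative state, as the paper does.

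The other two parts are essentially on track but one of them is incomplete at the crucial step. The non-existence for the boolean, monotone and anti-monotone convolutions is exactly the paper's argument: the obstructing states used against the Haar state ($\tfrac12(\delta_{I_n}+\delta_{-I_n})$ and $\tfrac12(\delta_{A}+\delta_{\bar A})$) are traces, so the same contradiction rules out a Haar trace. For the free case your candidate $(n\,\phi\ast\tr_n)\circ j_W$ is the correct one and traciality of $h_F$ is as you say, but the absorption mechanism you propose is not the one that works, and the step you defer is where the whole proof lives. The $\psi$-contribution does not ``drop out'' by the mixed-cumulant criterion: after summing over the admissible partitions of the second leg one is left with $\psi$-\emph{moments} over the blocks of the Kreweras complement $K(\sigma)$ of the cyclic alternating partitions $\sigma$ carrying the $h_F$-cumulants, and these moments, summed over the internal indices introduced by $\Delta$, collapse to Kronecker deltas via the unitarity relations --- but for the blocks of $K(\sigma)$ whose first letter is unstarred this requires first rotating the word cyclically inside $\psi$, which is exactly where traciality enters and why there is a Haar trace but no Haar state. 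As written, your argument never uses traciality of $\psi$ in the free case, so it would equally ``prove'' that $h_F$ absorbs all states, contradicting Theorem~\ref{Haarstatetheorem}; this shows the proposed mechanism cannot be the right one.
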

\begin{rmq}As nicely communicated by Moritz Weber, a careful examination of the proof of Theorem~\ref{Haartracetheorem} allows us to conclude a more general result: the free Haar trace $h$ on $U\langle n\rangle$ is such that $\phi\star_F h=h=h\star_F\phi$ for all state $\phi$ on $U_{n}^{\nc}$ such that $\phi(\sum_{k=1}^nu_{ki}u_{kj}^*)=\phi(\sum_{k=1}^nu_{ik}^*u_{jk})=\delta_{ij}$ ($1\leq i,j\leq n$), a case which includes the tracial states but not only. For example, a state which factorizes on the unitary quantum group, where $\sum_{k=1}^nu_{ki}u_{kj}^*=\sum_{k=1}^nu_{ik}^*u_{jk}=\delta_{ij}$, fulfills this condition and so is absorbed by the free Haar trace.
\end{rmq}
\begin{proof} In Section~\ref{casebool}, we prove the first item for the boolean convolution. In Section~\ref{casemono}, we prove the first item for the monotone and anti monotone convolution. In Section~\ref{freecase}, we prove the second item for the free convolution, and give a more explicit description of the free Haar trace. In Section~\ref{freecase}, we prove the second item for the tensor convolution, and give a more explicit description of the tensor Haar trace.
\end{proof}

Let us remark that one could also choose a side and ask about a \emph{right (resp. left) Haar state} for each of these independences. It would be a state $h$ such that for each state $\phi$, it holds that $h\star\phi=h$ (resp. $\phi\star h=h$). We define similarly a \emph{right (resp. left) Haar trace}. Nevertheless, the following result shows that this notion does not introduce any more generality.
\begin{prop}\label{sidestate}
Let us consider one of the five notions of independence. If $h$ is a right (resp. left) Haar state on $U\langle n\rangle$ then it is also a left (resp. right) Haar state. As well, if $h$ is a right (resp. left) Haar trace on $U\langle n\rangle$ then it is also a left (resp. right) Haar trace.\label{rlhaar}
\end{prop}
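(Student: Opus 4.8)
The plan is to exploit the coinverse $\Sigma$. First note that $\Sigma$ is a unital $*$-homomorphism with $\Sigma^2=\id$, since $\Sigma^2(u_{ij})=\Sigma(u_{ji}^*)=\Sigma(u_{ji})^*=u_{ij}$; hence $\Sigma$ is an involutive $*$-automorphism of $U_n^{\nc}$, the map $\phi\mapsto\widetilde\phi:=\phi\circ\Sigma$ is a bijection of the set of states (and of the set of tracial states, as $\Sigma$ respects traces), and $\delta\circ\Sigma=\delta$, so $\Sigma$ preserves the decomposition $U_n^{\nc}=\C\oplus\ker\delta$. The key algebraic input is the ``anti-coalgebra'' identity $\Delta\circ\Sigma=\tau\circ(\Sigma\bigsqcup\Sigma)\circ\Delta$, where $\tau$ is the $*$-automorphism of $U_n^{\nc}\sqcup U_n^{\nc}$ exchanging the two legs; both sides are $*$-homomorphisms, so it is enough to check it on the generators, where each side equals $\sum_k (u_{ki}^*)^{(2)}(u_{jk}^*)^{(1)}$.

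Combining this with the naturality $(\phi\star\psi)\circ(f\bigsqcup g)=(\phi\circ f)\star(\psi\circ g)$ of each of the five product states (for $f,g$ respecting the chosen decomposition, which $\Sigma$ does) and with the behaviour of the product states under $\tau$ — the free, tensor and boolean products are invariant under $\tau$ up to exchanging the two states, while $(\phi\rhd\psi)\circ\tau=\psi\lhd\phi$ and $(\phi\lhd\psi)\circ\tau=\psi\rhd\phi$ — one obtains $\widetilde{\phi\star\psi}=\widetilde\psi\star\widetilde\phi$ for $\star\in\{\star_F,\star_T,\star_B\}$, whereas $\widetilde{\phi\star_M\psi}=\widetilde\psi\star_{AM}\widetilde\phi$ and $\widetilde{\phi\star_{AM}\psi}=\widetilde\psi\star_M\widetilde\phi$.

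For the free, tensor and boolean convolutions this settles the matter. If $h$ is a right Haar state, applying $\widetilde{\;\cdot\;}$ to the identities $h\star\phi=h$ and using that $\phi\mapsto\widetilde\phi$ is onto shows $\psi\star\widetilde h=\widetilde h$ for every state $\psi$, that is, $\widetilde h$ is a left Haar state; evaluating $h\star\widetilde h$ first by right-invariance of $h$ and then by left-invariance of $\widetilde h$ gives $h=\widetilde h$, so $h$ is two-sided. The left-to-right implication is symmetric, and since $\phi\mapsto\widetilde\phi$ preserves traciality the same argument, carried out inside the set of tracial states, proves the statement for Haar traces. The case $n=1$ is immediate because $U_1^{\nc}$ is commutative and the five convolutions coincide there (alternatively it follows from Theorem~\ref{Haarstatetheorem}(1) by the same ``evaluate $h\star h_{\mathrm{Haar}}$ twice'' trick).

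The monotone and anti-monotone convolutions are the delicate case, and I expect this to be the main obstacle. Here the twist only exchanges $\star_M$ and $\star_{AM}$, so the proposition for these two convolutions reduces to showing directly that a right Haar state for $\star_M$ is also a left Haar state. By coassociativity of $\Delta$ and associativity of the monotone product, $\star_M$ is associative, and one checks that if $h$ is right Haar then $\phi\star_M h$ is again right Haar for every state $\phi$; hence it is enough to prove that a right Haar state for $\star_M$, if it exists, is unique (equivalently, is two-sided). Unlike in the symmetric cases the twist does not close this loop, so one must argue directly: either analyse the relations imposed by $h\star_M\phi=h$ on the moments of $h$ — already at first order they give $\sum_k h(u_{ik})\,\phi(u_{kj}-\delta_{kj})=0$ for all states $\phi$, forcing $h(u_{ij})=0$, and one proceeds similarly in higher degree — or combine the (separately established) non-existence of a two-sided monotone or anti-monotone Haar state (resp.\ Haar trace) for $n\ge 2$ with the trivial case $n=1$ to conclude that there is no one-sided such state (resp.\ trace) at all when $n\ge 2$ and nothing to prove when $n=1$.
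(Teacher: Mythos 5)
For the free, tensor and boolean convolutions your argument is exactly the paper's: the identity $\Delta\circ\Sigma=\tau\circ(\Sigma\bigsqcup\Sigma)\circ\Delta$, the resulting fact that $h\circ\Sigma$ is a Haar state on the other side, and the evaluation of $h\star(h\circ\Sigma)$ in two ways to get $h=h\circ\Sigma$. Your preliminary checks ($\Sigma^2=\id$, $\delta\circ\Sigma=\delta$ so that the decomposition $U_n^{\nc}=\C\oplus\ker\delta$ is preserved, traciality is preserved) are exactly what is needed to make the naturality step legitimate for the decomposition-dependent products.

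Where you genuinely depart from the paper is in the monotone/anti-monotone case, and your observation there is correct: since $(\phi\rhd\psi)\circ\tau=\psi\lhd\phi$, the flip argument turns a right Haar state for $\star_M$ into a left Haar state for $\star_{AM}$, not for $\star_M$, so the loop does not close. The paper's proof writes $[(\phi\circ\Sigma)\odot(h\circ\Sigma)]\circ\Delta$ with the \emph{same} symbol $\odot$ on both sides of the computation, which is only valid for the three flip-symmetric products; you have caught a real gap in the uniform treatment of all five independences. Your proposed repairs, however, are only sketched. The reduction "right Haar implies two-sided provided right Haar states are unique" is fine, but the uniqueness is exactly what needs proving, and the first-order computation $h(u_{ij})=0$ does not by itself deliver it. The fallback via non-existence does work for $n\geq 2$, but one must be careful that the non-existence arguments of Sections~\ref{casebool} and~\ref{casemono} each use only \emph{one} side of the invariance ($h\star_M\phi=h$ for monotone, $\phi\star_{AM}h=h$ for anti-monotone); ruling out a \emph{left} monotone (equivalently, by your flip identity, a \emph{right} anti-monotone) Haar state requires an additional moment computation of the same flavour that neither you nor the paper writes down. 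Since the proposition is only ever invoked later for the free and tensor convolutions, this does not affect the rest of the paper, but your criticism of the monotone/anti-monotone case stands and would deserve either the extra computation or a restriction of the statement to the three symmetric products.
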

\begin{proof}
Let $h$ be a right Haar state. We define the flip $\tau$ on $U_{n}^{\nc}\sqcup U_{n}^{\nc}$ as the $*$-homomorphism such that $\tau(u_{ij}^{(1)})=u_{ij}^{(2)}$ and $\tau(u_{ij}^{(2)})=u_{ij}^{(1)}$, where the exponent $(1)$ and $(2)$ indicate if the element is in the first leg of $U_{n}^{\nc}\sqcup U_{n}^{\nc}$ or in the second leg. A simple computation on the generators $u_{ij}$ shows that $\tau\circ(\Sigma\sqcup\Sigma)\circ\Delta=\Delta\circ\Sigma$. 
Therefore, by denoting the notion of independence at hand by $\odot$, we have for all states $\phi$:
$$ h\circ \Sigma=(h\odot\phi)\circ\Delta\circ \Sigma=(h\odot\phi)\circ\tau\circ (\Sigma\sqcup \Sigma)\circ\Delta=[(\phi\circ \Sigma)\odot (h\circ \Sigma)]\circ \Delta.
$$
Because $\Sigma$ is invertible, this says exactly that $h\circ \Sigma$ is a left Haar state. But then we have:
$$h=h\star(h\circ \Sigma)=h\circ \Sigma$$
by using the right (resp. left) Haar state property of $h$ (resp. $h\circ \Sigma$). Therefore, $h=h\circ \Sigma$ is a right and left Haar state. The argument is valid when replacing $h$ and $\phi$ by tracial states since it implies that $h\circ \Sigma$ and $\phi\circ \Sigma$ are also tracial.
\end{proof}

\subsection{The Haar state in the one-dimensional case}\label{caseone}

Let us emphasize first that we identify the states on $U\langle1\rangle$ with the probability measure on $\{z\in \mathbb{C}:|z|=1\}$ via $\mu(u^k)=\int_\mathbb{U} z^kd\mu(z)$ and $\mu({u^*}^k)=\int_\mathbb{U} \bar{z}^kd\mu(z)$ for $k\in \mathbb{N}$. The Haar measure is the uniform measure on the unit circle and is given by
$h(u^k)=h({u^*}^k)=\delta_{k0}$ for $k\in \mathbb{N}$.

The free, tensor independent, boolean, monotone and anti-monotone convolutions on $U\langle1\rangle$ correspond to five different multiplicative convolutions on probability measures on $\mathbb{U}$ which have been already studied in the literature. In each of those cases, it is straightforward to prove that $h$ is absorbing.

For the free multiplicative convolution, we refer to~\cite{Voiculescu1992}, or to Section~\ref{freecase}. For the tensor independent convolution, one has just to observe that $\phi\star_T h(u^k)=\phi(u^k)h(u^k)=\delta_{k0}$.

For the Boolean, the monotone, and the anti-monotone convolutions, our references are~\cite{Bercovici2005,Franz2005,Franz2008}. Let $\mu$ be a probability measure on $\mathbb{U}$. We define the $K$-transform of $\mu$ for $|z|<1$ by
$$K_\mu(z)=\left(\int_\mathbb{U} \frac{zx}{1-zx}d\mu(x)\right)\Big/\left(\int_\mathbb{U} \frac{1}{1-zx}d\mu(x)\right).$$
Let us remark that $K_h(z)=0$. The $K$-transform of the multiplicative Boolean convolution of $\mu$ and $\nu$ is given by $\frac{1}{z}K_\mu(z)\cdot K_\nu(z)$, and consequently, $h$ is absorbing for the Boolean convolution. The $K$-transform of the multiplicative monotone (resp. anti-monotone) convolution of $\mu$ and $\nu$ is given by $K_\mu \circ K_\nu$ (resp. $K_\nu \circ K_\mu$), and consequently, $h$ is absorbing for the monotone and anti-monotone convolutions.
\subsection{The non existence of Haar state in the free and tensor cases}\label{casetwo}
In this section, we prove that there exists no free Haar state, nor tensor Haar state, for $n\geq2$.

Let us take $n\geq2$ and assume that $h$ is a free Haar state. We take $1\leq k\leq n-1$ and we consider the unitary matrix of size $2n\times 2n$ (which is a version of~\cite[Non-example 4.1]{wang1995}, attributed to Woronowicz):
$$M_k=\left(\begin{array}{ccccccccc}I_{2k-2}&|&0&0&|&0&0&|&0\\----&|&--&--&|&--&--&|&----\\0&|&0&1&|&0&0&|&0\\0&|&0&0&|&1&0&|&0\\----&|&--&--&|&--&--&|&----\\0&|&1&0&|&0&0&|&0\\0&|&0&0&|&0&1&|&0\\----&|&--&--&|&--&--&|&----\\0&|&0&0&|&0&0&|&I_{2n-2k-2}\end{array}\right).$$
For all $1\leq i,j\leq n$, we set $j_k(u_{ij})$ the $(i,j)$-th block of $M_k$ of size $2\times 2$. Because $M_k$ is unitary, $j_k$ extends to a quantum random variable $j:U_{n}^{\nc} \rightarrow \mathcal{M}_2(\mathbb{C})$. We define the state $\phi_k$ for all $a\in U_{n}^{\nc}$ as $\phi_k(a)=\langle e_2,j_k(a)e_2\rangle$, or equivalently, as the $(2,2)$-th coefficient of $j_k(a)$. Then, for every $1\leq i,j\leq n$, we have $\phi_k(u_{ik}u_{jk}^*)=0$. Let us remark that $h$ being a free Haar state, we also have
$$h(u_{ik}u_{ik}^*)=\sum_{p,q=1}^n(h\star_F\phi_k)(u_{ip}^{(1)}u_{pk}^{(2)}u_{qk}^{(2)*}u_{iq}^{(1)*})=\sum_{p,q=1}^nh(u_{ip}u_{iq}^*)\phi_k(u_{pk}u_{qk}^*)=0$$
This reasoning can be done for any $1\leq k\leq n-1$. For $k=n$ we take the matrix $M_{k}$ in the which we have exchanged the last two columns of blocks. We therefore also have $\phi_k(u_{in}u_{jn}^*)=0$ and thus $h(u_{in}u_{in}^*)=0$.
Therefore we should have:
$$\sum_{k=1}^n h(u_{ik}u_{ik}^*)=\sum_{k=1}^n0=0$$ 
which contradicts the unitarity relation $\sum_{k=1}^n u_{ik}u_{ik}^*=1$.

The same proof can be done for the tensor case as well. Indeed, the tensor independance also verifies that, for any $1\leq i,j,p,q,k\leq n$,
$$(h\star_T\phi)(u_{ip}^{(1)}u_{pk}^{(2)}u_{qk}^{(2)*}u_{iq}^{(1)*})=h(u_{ip}u_{iq}^*)\phi(u_{pk}u_{qk}^*).$$


\subsection{The boolean case}\label{casebool}
In this section, we prove that for $n\geq2$, there exist no boolean Haar state and no boolean Haar trace on $U\langle n\rangle$.


First of all, we remark the following general result: if $\phi$ and $\psi$ are two states on $U_{n}^{\nc}$ and if $a,c$ come from the left leg and $b$ from the right leg of $U_{n}^{\nc}\sqcup U_{n}^{\nc}$, then we have
$$(\phi\star_B\psi)(abc)=(\phi\star_B\psi)\Big(a(b-\delta(b))c\Big)+\delta(b)\phi(ac)=\phi(a)\psi(b-\delta(b))\phi(c)+\delta(b)\phi(ac).$$
For all state $\phi$, let us introduce the following matrices:
\begin{eqnarray*}
  N_\phi&=&(\phi(u_{ij}))_{ij}\in\mathcal{M}_n(\mathbb{C}),\\
  \bar{N}_\phi&=&(\phi(u_{ij}^*))_{ij}\in\mathcal{M}_n(\mathbb{C}),\\
  M_\phi&=&(\phi(u_{ij}^*u_{kl}))_{(i,k)(j,l)}\in\mathcal{M}_{n^2}(\mathbb{C}).
\end{eqnarray*}
Suppose that there exists a boolean Haar state $h$. Then, for any state $\phi$,
\begin{eqnarray*}
  h(u_{ij}^*u_{kl})&=&\sum_{\alpha,\beta=1}^n(h\star_B\phi)(u_{\alpha j}^{(2)*}u_{i\alpha}^{(1)*}u_{k\beta}^{(1)}u_{\beta l}^{(2)})\\
  &=&\sum_{\alpha,\beta=1}^n[\phi(u_{\alpha j}^*)h(u_{i\alpha}^*u_{k\beta}-\delta(u_{i\alpha}^*u_{k\beta}))\phi(u_{\beta l})+\delta(u_{i\alpha}^*u_{k\beta})\phi(u_{\alpha j}^*u_{\beta l})]\\
  &=&\sum_{\alpha,\beta=1}^n\phi(u_{\alpha j}^*)h(u_{i\alpha}^*u_{k\beta})\phi(u_{\beta l})-\phi(u_{ij}^*)\phi(u_{kl})+\phi(u_{ij}^*u_{kl}),
\end{eqnarray*}
which can be written
\begin{eqnarray}
M_h&=&M_h(\bar{N}_\phi\otimes N_\phi)-(\bar{N}_\phi\otimes N_\phi)+M_\phi\nonumber\\
&=&(M_h-I_{n^2})(\bar{N}_\phi\otimes N_\phi)+M_\phi\label{eqnfondamentale}
\end{eqnarray}
where $\otimes$ denotes here the tensor product (or Kronecker product) of matrices.

A measure $\mu$ on the unitary group $U(n)=\{M\in \mathcal{M}_n(\mathbb{C}):U^*U=I_N\}$ can be seen as a unique state on $U_{n}^{\nc}$ via the integration map
$$\mu(u_{i_1j_1}^{\epsilon_1}\ldots u_{i_qj_q}^{\epsilon_q})=\int_{U(n)}U_{i_1j_1}^{\epsilon_1}\ldots U_{i_qj_q}^{\epsilon_q}\diff \mu(U).$$
Let us set $\phi_1=(1/2)(\delta_{I_n}+\delta_{-I_n})$. For all $1\leq i,j,k,l\leq n$, we have $\phi_1(u_{ij})=\phi_1(u_{ij}^*)=0$ and $\phi_1(u_{ij}^*u_{kl})=\frac{1}{2}(\delta_{ij}\delta_{kl}+\delta_{ij}\delta_{kl})=\delta_{ij}\delta_{kl}$, or equivalently $N_{\phi_1}=0$ and $M_{\phi_1}=I_{n^2}$. By replacing it into \eqref{eqnfondamentale}, we get $M_h=I_{n^2}$.

Consider now another state $\phi_2$ defined by $(1/2)(\delta_{A}+\delta_{\bar{A}})$ where $A=\Diag(i,1,\ldots ,1)$. We see that $M_{\phi_2}\neq I_{n^2}$ because $\phi_2(u_{22}^*u_{11})=0$. Replacing $M_{h}$ by $I_{n^2}$ and $M_\phi$ by   $M_{\phi_2}\neq I_{n^2}$ in~\eqref{eqnfondamentale} yields to a contradiction.

Now, let us remark that $\phi_1$ and $\phi_2$ are both tracial, and consequently the proof allows also to conclude that there exists no Haar trace for the boolean convolution.

\subsection{The monotone and the antimonotone case}\label{casemono}
In the proof of the nonexistence of a boolean Haar state, the only property of the boolean independence that we needed was
$$(h\star_B\phi)(abc)=\phi(a)h(b-\delta(b))\phi(c)+\delta(b)\phi(ac)$$
for $a,c$ in the right leg and $b$ in the left leg of $U_{n}^{\nc}\sqcup U_{n}^{\nc}$. The monotone independence verifies this same property and we can thus deduce that there exists no monotone Haar state. On the contrary, the antimonotone case verifies $(h\star_{AM}\phi)(abc)=h(b)\phi(ac).$
Nevertheless, for $x,z$ in the left leg and $y$ in the right leg of $U_{n}^{\nc}\sqcup U_{n}^{\nc}$, we have $$(\phi\star_{AM} h)(xyz)=\phi(x)h(y-\delta(y))\phi(z)+\delta(y)\phi(xz).$$ We can then do the computation of the relation $h(u_{ij}u_{kl}^*)=(\phi\star_{AM}h)\Delta(u_{ij}u_{kl}^*)$ in the exact same way as before and we find that $M_h=(N_\phi\otimes\bar{N}_\phi)(M_h-I_{n^2})+M_\phi$.
We again find a contradiction by looking on the particular states $\phi_1$ and $\phi_2$ . To sum it up, for $n\geq2$, there exists no monotone (resp. antimonotone) Haar state on $U\langle n\rangle$.

The same remark, about the traciality of the states used, allows us to conclude about the non-existence of a Haar trace.

\subsection{The free Haar trace}\label{freecase}In this section, we define the free Haar trace and prove that is is indeed an absorbing state for the free convolution on $U_{n}^{\nc}$ with other tracial states.

Let us first interpret the existence result of the free Haar trace on $U\langle n\rangle$ in a very concrete way as follows. Let us denote by $h$ the Haar trace of $U\langle n\rangle$ for the free convolution, and by $u=(u_{i,j})_{1\leq i,j\leq n}$ the collection of generators of $U_{n}^{\nc}$. Let $A=(a_{ij})_{1\leq i,j\leq n}\in \mathcal{M}_n(\mathcal{A})$ be a collection of random variables in $(\mathcal{A},\phi)$ ($\phi$ tracial) such that $(a_{ij})_{1\leq i,j\leq n}$ is unitary. Setting
$(b_{ij})_{1\leq i,j\leq n}=uA\in  \mathcal{M}_n(\mathcal{A}\sqcup U_{n}^{\nc})$ and $(c_{ij})_{1\leq i,j\leq n}=Au\in  \mathcal{M}_n(\mathcal{A}\sqcup U_{n}^{\nc})$, the collection $\{b_{ij}\}_{1\leq i,j\leq n}$ and $\{c_{ij}\}_{1\leq i,j\leq n}$ have both the same distribution as $\{u_{ij}\}_{1\leq i,j\leq n}$ in the noncommutative space $(\mathcal{A}\sqcup U_{n}^{\nc},\phi\ast h)$.

In order to define the state which will play the role of the Haar trace, we have to define a Haar unitary variable. A noncommutative variable $U$ of a noncommutative probability space $(\mathcal A,\phi)$ is called \emph{Haar unitary} if it is a unitary variable, and $\phi(U^k)=0$ for all $k\geq 0$. Here is a description of its free cumulants.

\begin{prop}[Remark 3.4.3. of~\cite{Speicher1998}]\label{unitaire}
Let $U$ be a Haar unitary element on some noncommutative probability space. Then, for all $r\geq 1$ and $\epsilon_1,\ldots,\epsilon_r\in \{1,\ast\}$, we have:
\begin{eqnarray*}
\kappa_r(U^{\epsilon_1},\ldots,U^{\epsilon_r})&=&\left\{\begin{array}{cc}
(-1)^{r/2-1}C_{r/2-1}&\text{if }r\text{ is even and the }\epsilon_i\text{ are alternating }(\epsilon_i\neq \epsilon_{i+1})\\
0&\text{else},
\end{array}\right.
\end{eqnarray*}
where $C_i=(2i)!/(i+1)!i!$ designate the Catalan numbers.
\end{prop}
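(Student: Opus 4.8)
The plan is to compute the $*$-cumulants of $U$ directly from the combinatorial definition of free cumulants, exploiting the fact that the $*$-moments of a Haar unitary are known explicitly: $\phi(U^{\epsilon_1}\cdots U^{\epsilon_r})$ is $1$ when the word $U^{\epsilon_1}\cdots U^{\epsilon_r}$ reduces to the identity after cancelling adjacent $UU^*$ pairs — equivalently, when $(\epsilon_1,\ldots,\epsilon_r)$ is a ``balanced'' sequence whose partial sums (counting $1$ as $+1$ and $\ast$ as $-1$) never change sign and end at $0$ — and is $0$ otherwise. The first step is to record this moment formula and to reformulate it: the number of such alternating-sign balanced words of length $r=2m$ is the Catalan number $C_m$, and in fact $\phi(U^{\epsilon_1}\cdots U^{\epsilon_{2m}})$ equals the number of non-crossing pair partitions of $\{1,\ldots,2m\}$ that connect each $U$ to a later $U^*$ respecting the sign constraint. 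This already suggests the answer will be expressible through Catalan numbers with alternating signs.

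Next I would set up the moment-cumulant inversion. Write $M_r = \phi(U^{\epsilon_1}\cdots U^{\epsilon_r})$ and use the defining relation $\phi(A_1\cdots A_r) = \sum_{\sigma\in NC(r)} \prod_{\text{blocks}} \kappa(\ldots)$ together with Möbius inversion on the non-crossing partition lattice, $\kappa_r(A_1,\ldots,A_r) = \sum_{\sigma\in NC(r)} \phi_\sigma(A_1,\ldots,A_r)\,\mu(\sigma,1_r)$, where $\mu$ is the Möbius function of $NC(r)$ and $\phi_\sigma$ the product of moments along the blocks of $\sigma$. The key observations to push through are: (i) if $r$ is odd, every term vanishes for parity reasons, so $\kappa_r=0$; (ii) if the $\epsilon_i$ are not alternating, one finds two adjacent equal letters, and a standard argument (or traciality plus the vanishing of $\kappa$ when a variable can be ``pulled out'') forces the cumulant to vanish — alternatively this follows from the fact that $U$ and $U^*$ together with their freeness structure behave like a single pair and mixed patterns cancel. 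The remaining case, $r=2m$ with alternating signs, is where the real computation lies: one must evaluate $\sum_{\sigma\in NC(2m)} \phi_\sigma\, \mu(\sigma,1_{2m})$, and here only those $\sigma$ all of whose blocks are ``balanced alternating'' sub-words contribute a nonzero $\phi_\sigma$ (each contributing $1$).

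The main obstacle is precisely this last sum: showing $\sum_{\sigma} \mu(\sigma,1_{2m}) = (-1)^{m-1}C_{m-1}$, where $\sigma$ ranges over non-crossing partitions of the alternating word whose blocks are each balanced. I would handle this either (a) by an explicit generating-function / recursion argument — relate the cumulant generating series to the moment generating series via the $R$-transform or the formula $1 + \sum M_m z^m \cdot(\ldots)$, using that $\sum_m C_m z^m = \frac{1-\sqrt{1-4z}}{2z}$, so that the reciprocal-type relation between moments and cumulants produces the claimed alternating-Catalan coefficients — or (b) by citing the known computation in \cite{Speicher1998} and \cite{Nica2006} that the free cumulant series of a Haar unitary, packaged appropriately, is $z \mapsto$ a shifted Catalan series with alternating signs. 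I would most likely present approach (a): compute the moment series $F(z) = \sum_{m\geq 0} C_m z^m$ for the alternating $*$-moments, observe that the cumulant series $C(z)$ must satisfy the non-crossing-partition functional equation $F(z) = 1 + C(z F(z)^2)$ or its $*$-analogue, solve for $C$, and read off $\kappa_{2m} = (-1)^{m-1} C_{m-1}$; then separately dispose of the odd and non-alternating cases by the parity and pull-out arguments sketched above. A sanity check at small $r$ ($\kappa_2(U,U^*) = \phi(UU^*) - \phi(U)\phi(U^*) = 1 = (-1)^0 C_0$, $\kappa_4(U,U^*,U,U^*) = -1 = (-1)^1 C_1$) confirms the normalization.
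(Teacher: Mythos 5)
The paper does not actually prove this proposition: it is imported verbatim as Remark 3.4.3 of~\cite{Speicher1998} (see also Lecture 15 of~\cite{Nica2006}), so your argument has to stand on its own, and its overall strategy --- moment--cumulant inversion, kill the odd and non-balanced cases, then solve the alternating case by a recursion or functional equation --- is indeed the standard route. But two of the ingredients you build on are wrong as stated. First, the moment formula: since $UU^*=U^*U=1$, any word $U^{\epsilon_1}\cdots U^{\epsilon_r}$ equals $U^k$ with $k=\#\{i:\epsilon_i=1\}-\#\{i:\epsilon_i=\ast\}$, so $\phi(U^{\epsilon_1}\cdots U^{\epsilon_r})=1$ exactly when the word is balanced; your extra ``partial sums never change sign'' condition is false ($\phi(U^*U)=1$), and the moment is certainly not the number of sign-respecting non-crossing pairings ($\phi(UU^*UU^*)=1$ while there are two such pairings). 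Second, and consequently, your functional-equation step is fed the wrong series: the alternating moments $\phi\bigl((UU^*)^m\bigr)$ are all equal to $1$, so the moment series is $F(z)=\sum_{m\geq 0}z^m=1/(1-z)$, not the Catalan series. The shape $F(z)=1+C(zF(z)^2)$ is in fact the right relation here, but if you plug in $F=\sum_m C_mz^m$ (which satisfies $F=1+zF^2$) you get $C(F-1)=F-1$, i.e.\ $C(w)=w$ --- the cumulants of a circular element, all higher alternating cumulants zero --- contradicting your own check $\kappa_4(U,U^*,U,U^*)=-1$. With the correct $F=1/(1-z)$ the equation gives $C(z)=(\sqrt{1+4z}-1)/2=\sum_{k\geq1}(-1)^{k-1}C_{k-1}z^k$, as desired.

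Beyond fixing those, two steps still need actual proofs. The non-balanced case is fine via scaling ($\omega U$ is again Haar unitary for $|\omega|=1$, and multilinearity rescales $\kappa_r$ by $\omega^{\#1-\#\ast}$), and this also covers odd $r$. But the balanced-yet-non-alternating case, which you dismiss with ``mixed patterns cancel,'' is the technically delicate part: one needs an induction on $r$ showing that the partitions $\sigma\neq 1_r$ with balanced blocks already exhaust the moment whenever two adjacent letters agree, or alternatively the product formula for cumulants applied to $\kappa_m(uu^*,\ldots,uu^*)=\delta_{m1}$; this is where the bulk of the work in~\cite{Speicher1998} and~\cite{Nica2006} lies. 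Finally, the functional equation itself must be derived (e.g.\ by decomposing a non-crossing partition of the alternating word according to the block containing $1$ and its gaps, which yields $\sum_{k=1}^{m}\binom{m+k-1}{2k-1}\alpha_k=1$ for $\alpha_k=\kappa_{2k}(U,U^*,\ldots,U,U^*)$), and that derivation already presupposes the vanishing results, so the logical order matters.
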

Let us consider a Haar unitary random variable $U$ in $(\mathcal{A},\phi)$ and construct from there a quantum variable $j_U:U_{n}^{\nc}\rightarrow E_{11}(\mathcal{A}\sqcup \mathcal{M}_n(\mathbb{C}))E_{11}$ determined by $j_U(u_{ij})=E_{1i}UE_{j1}$ for all $1\leq i,j\leq n$ as indicated in Proposition-Definition~\ref{haarstate}. We will study the state $h=[n(\phi\ast \tr_n)]\circ j_U$ on $U_{n}^{\nc}$. We compute first the free cumulants of our variables $u_{ij}$ and $u^*_{ij}$. In fact, for all $1\leq i,j\leq n$, we denote by $(u^*)_{ij}$ the generator $u^*_{ji}$. The free cumulants of $u_{ij}$ and $(u^*)_{ij}$ turn out to be more convenient than the free cumulants of $u_{ij}$ and $u^*_{ij}$.
\begin{cor}The free cumulants of $(u_{ij})_{1\leq i,j\leq n}$  and $((u^*)_{ij})_{1\leq i,j\leq n}=(u_{ji}^*)_{1\leq i,j\leq n}$ in the noncommutative probability space $(U_{n}^{\nc},h)$ are given as follows.

Let $1\leq i_1,j_1,\ldots,i_r,j_r\leq n$ and $ \epsilon_1,\ldots, \epsilon_r$ be either $\emptyset$ or $\ast$. If the indices are cyclic (i.e. if $j_{l-1}=i_l$ for $2\leq l\leq q$ and $i_1=j_r$), $r$ is even and the $\epsilon_i$ are alternating, we have\label{cyclicu}
\begin{eqnarray*}
\kappa_r\Big((u^{\epsilon_1})_{i_1j_1},\ldots,(u^{\epsilon_r})_{i_rj_r}\Big)&=&n^{1-r}(-1)^{r/2-1}C_{r/2-1}.
\end{eqnarray*} 
If not, the left handside is equal to zero.
\end{cor}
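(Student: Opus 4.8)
The plan is to combine two facts established earlier in the excerpt: Proposition~\ref{cyclic} (Theorem 14.18 of~\cite{Nica2006}), which gives the free cumulants of the blocks $j_U(u_{ij})=E_{1i}UE_{j1}$ in terms of those of $U$, and Proposition~\ref{unitaire} (Remark 3.4.3 of~\cite{Speicher1998}), which gives the free cumulants of a Haar unitary. Since $h=[n(\phi\ast\tr_n)]\circ j_U$ with $U$ a Haar unitary, the free cumulants of $(u_{ij})_{1\le i,j\le n}$ and $((u^*)_{ij})_{1\le i,j\le n}$ in $(U_n^{\nc},h)$ are exactly the free cumulants of the blocks $j_U(u_{ij})$ and $j_U((u^*)_{ij})$ in $\bigl(E_{11}(\mathcal{A}\sqcup\mathcal{M}_n(\mathbb{C}))E_{11},\,n(\phi\ast\tr_n)\bigr)$, so it suffices to specialize Proposition~\ref{cyclic} to $m=1$, $U^{(1)}=U$ Haar unitary.

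First I would record the identification $(u^*)_{ij}=u_{ji}^*=j_U(u_{ij})^*=(U^*)_{ij}=E_{1i}U^*E_{j1}$, so that a mixed cumulant $\kappa_r\bigl((u^{\epsilon_1})_{i_1j_1},\dots,(u^{\epsilon_r})_{i_rj_r}\bigr)$ in $(U_n^{\nc},h)$ is precisely $\kappa_r\bigl((U^{\epsilon_1})_{i_1j_1},\dots,(U^{\epsilon_r})_{i_rj_r}\bigr)$ in the compressed space, where now I allow the exponents $\epsilon_l\in\{\emptyset,\ast\}$ and interpret $(U^{\epsilon})_{ij}$ as $E_{1i}U^{\epsilon}E_{j1}$. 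Proposition~\ref{cyclic} (whose statement, applied with the variables $U^{(m_l)}$ replaced by $U^{\epsilon_l}$, is the natural generalization allowing adjoints) then says this cumulant vanishes unless the indices are cyclic — i.e. $i_l=j_{l-1}$ for $2\le l\le r$ and $i_1=j_r$ — and in the cyclic case equals $n\,\kappa_r\bigl(\tfrac1n U^{\epsilon_1},\dots,\tfrac1n U^{\epsilon_r}\bigr)=n^{1-r}\,\kappa_r(U^{\epsilon_1},\dots,U^{\epsilon_r})$, using multilinearity of free cumulants to pull out the scalar factors $\tfrac1n$.

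It then remains to insert Proposition~\ref{unitaire}: $\kappa_r(U^{\epsilon_1},\dots,U^{\epsilon_r})$ equals $(-1)^{r/2-1}C_{r/2-1}$ when $r$ is even and the $\epsilon_i$ alternate ($\epsilon_i\neq\epsilon_{i+1}$), and $0$ otherwise. Multiplying by $n^{1-r}$ gives the claimed value $n^{1-r}(-1)^{r/2-1}C_{r/2-1}$ in the cyclic, even, alternating case, and $0$ in every other case — either because the indices fail to be cyclic (Proposition~\ref{cyclic}), or because $r$ is odd or the $\epsilon_i$ are not alternating (Proposition~\ref{unitaire}). This is the full argument; essentially no real obstacle remains once the two cited results are in hand.

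The one point that deserves a word of care — and which I expect to be the only mildly delicate step — is checking that Proposition~\ref{cyclic} as stated, which is phrased for a family $U^{(1)},\dots,U^{(m)}$ of unitaries and cumulants $\kappa_q\bigl((U^{(m_1)})_{i_1j_1},\dots\bigr)$ in the $u_{ij}$-variables only, indeed applies verbatim when some of the arguments are the adjoint blocks $(U^*)_{ij}=E_{1i}U^*E_{j1}$. This is fine: one may simply regard $U$ and $U^*$ as two of the unitaries in the family (so $m=2$, $U^{(1)}=U$, $U^{(2)}=U^*$), and the statement of Theorem 14.18 of~\cite{Nica2006} covers arbitrary choices of $m_l\in\{1,2\}$; then $\kappa_r(U^{\epsilon_1},\dots,U^{\epsilon_r})$ on the right-hand side is a genuine $*$-cumulant of the single Haar unitary $U$, which is exactly what Proposition~\ref{unitaire} computes. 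With that observation the corollary follows immediately by substitution.
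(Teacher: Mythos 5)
Your proposal is correct and follows exactly the paper's own route: the paper likewise proves the corollary by applying Proposition~\ref{cyclic} with $U^{(1)}=U$ and $U^{(2)}=U^*$ (so that the adjoint blocks are covered by treating $U^*$ as a second unitary in the family) and then substituting the Haar-unitary cumulants from Proposition~\ref{unitaire}. Your extra care about multilinearity and the identification $(u^*)_{ij}=E_{1i}U^*E_{j1}$ only makes explicit what the paper leaves implicit.
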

\begin{proof}It suffices to apply Proposition~\ref{cyclic} to $U^{(1)}=U$ and $U^{(2)}=U^*$ in order to get the free cumulants of $j_U(u_{ij})=U_{ij}$ and $j_U((u^*)_{ij})=(U^*)_{ij}$.
\end{proof}

We will need another property of free cumulants. Let us first introduce new notation. For all $r\in \mathbb{N}$, $S\subset\left\{1, \ldots , r\right\}$, $\sigma \in NC(S)$, and $A_1,\ldots,A_r\in \mathcal{A}$, set
\begin{align}\phi_\sigma\left(A_1,\ldots,A_r\right)&=\hspace{-0.4cm}\prod_{\{i_1\leq \ldots\leq i_k\}\in \sigma}\hspace{-0.4cm}\phi(A_{i_1}\cdots A_{i_k}),\nonumber\\
\kappa_\sigma\left(A_1,\ldots,A_r\right)&=\hspace{-0.4cm}\prod_{\{i_1\leq \ldots\leq i_k\}\in \sigma}\hspace{-0.4cm}\kappa_k(A_{i_1},\ldots,A_{i_k}).\label{notationcum}\end{align}
Remark that, even if we write $r$ variables on the left side, the right side only involves the variables which correspond to indices which are in $S\subset \left\{1, \ldots , r\right\}$.
\begin{prop}
Let $\left\{1,\ldots,r\right\}=E\cup F$ be a disjoint union of two subsets. We suppose that $\sigma$ is a non-crossing partition on $E$. Then, for all $A_1,\ldots, A_n\in \mathcal{A}$, we have
$$\sum_{\substack{\mu\in NC(F)\text{s.t.}\\\mu\cup\sigma\in NC(r)}}\kappa_\mu(A_1,\ldots,A_{r})=\phi_{K(\sigma)}(A_1,\ldots,A_{r})$$
where $K(\sigma)$ is the biggest partition on $F$ such that $\sigma\cup K(\sigma)$ is non-crossing.\label{separer}
\end{prop}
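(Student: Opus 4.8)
The plan is to read the right–hand side through the moment–cumulant formula and then recognise the index set on the left as the order ideal generated by the relative Kreweras complement $K(\sigma)$. First I would record the ``partitioned'' version of the moment–cumulant formula: for any finite ordered set $S$ and any $\tau\in NC(S)$,
$$\phi_\tau(A_1,\ldots,A_r)=\sum_{\substack{\nu\in NC(S)\\ \nu\preceq\tau}}\kappa_\nu(A_1,\ldots,A_r),$$
which follows by applying the defining relation of the free cumulants to each block of $\tau$ separately and using that $\{\nu\in NC(S):\nu\preceq\tau\}$ is, as a poset, the product $\prod_{V\in\tau}NC(V)$. Applying this with $S=F$ and $\tau=K(\sigma)$ turns the claimed identity into
$$\phi_{K(\sigma)}(A_1,\ldots,A_r)=\sum_{\substack{\nu\in NC(F)\\ \nu\preceq K(\sigma)}}\kappa_\nu(A_1,\ldots,A_r),$$
so the proposition is equivalent to the purely combinatorial statement $\{\nu\in NC(F):\nu\preceq K(\sigma)\}=\{\mu\in NC(F):\sigma\cup\mu\in NC(r)\}$.

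For the inclusion ``$\supseteq$'' there is nothing to do: if $\mu\in NC(F)$ and $\sigma\cup\mu\in NC(r)$ then $\mu$ is one of the partitions of $F$ whose union with $\sigma$ is non-crossing, hence $\mu\preceq K(\sigma)$ by the very definition of $K(\sigma)$ as the largest such partition. For ``$\subseteq$'', take $\nu\in NC(F)$ with $\nu\preceq K(\sigma)$ and observe that $\sigma\cup\nu\preceq\sigma\cup K(\sigma)\in NC(r)$. The partition $\sigma\cup\nu$ restricts to the one-block partition on each block of $\sigma$, and restricts to $\nu|_W$ on each block $W$ of $K(\sigma)$; since $\nu\preceq K(\sigma)$ inside $NC(F)$, the interval decomposition of $NC(F)$ gives $\nu|_W\in NC(W)$ for every such $W$. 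Thus $\sigma\cup\nu$ is a refinement of $\sigma\cup K(\sigma)$ whose restriction to every block of $\sigma\cup K(\sigma)$ is non-crossing, and the interval decomposition of $NC(r)$ applied to $\sigma\cup K(\sigma)$ then forces $\sigma\cup\nu\in NC(r)$. This proves the combinatorial identity and hence the proposition.

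The only input that is not pure bookkeeping is the well-definedness of $K(\sigma)$, i.e. the fact that among the partitions $\mu$ of $F$ with $\sigma\cup\mu\in NC(r)$ there is a largest one; this is the relative version of the Kreweras complement and can be quoted from \cite{Nica2006} (or deduced from the usual ``chords in a disk'' picture used for the ordinary Kreweras complement). The step that requires a little care is the ``$\subseteq$'' argument above, since refining a non-crossing partition does not in general keep it non-crossing — the point is precisely that we refine only within the blocks of $K(\sigma)$ and using a partition that is itself non-crossing there, so that the factorisation $[\,\cdot\,,\tau]\cong\prod_{V\in\tau}NC(V)$ of the lattice $NC$ can be invoked twice. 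Everything else is a direct application of the moment–cumulant formula.
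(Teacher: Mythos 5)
Your proof is correct and follows essentially the same route as the paper: apply the moment--cumulant formula blockwise to $K(\sigma)$ and then identify $\{\mu\in NC(F):\mu\preceq K(\sigma)\}$ with $\{\mu\in NC(F):\mu\cup\sigma\in NC(r)\}$, a bijection the paper simply asserts ``by definition of $K(\sigma)$'' and which you verify in detail via the interval decomposition of $NC$.
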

\begin{proof}Let us compute
$$
\phi_{K(\sigma)}(A_1,\ldots,A_{r})=\sum_{\substack{\mu\in NC(F)\\\mu\preceq K(\sigma)}}\kappa_\mu(A_1,\ldots,A_{r})
=\sum_{\substack{\mu\in NC(F)\\\mu\cup\sigma\in NC(r)}}\kappa_\mu(A_1,\ldots,A_{r})
$$
because, by definition of $K(\sigma)$, the set $\{\mu\in NC(F):\mu\preceq K(\sigma)\}$ is in one-to-one correspondence with the set $\{\mu\in NC(F):\mu\cup\sigma\in NC(r)\}$.\end{proof}

We are now ready to prove that $h=[n(\phi\ast \tr_n)]\circ j_U$ is indeed a Haar trace for the free convolution.

\begin{proof}[Proof of Theorem~\ref{Haartracetheorem} in the free case]
Let $\phi$ be a tracial state on $U_{n}^{\nc}$. Let $1\leq i_1,j_1,\ldots,i_r,j_r\leq n$, let $ \epsilon_1,\ldots, \epsilon_r$ be either $\emptyset$ or $\ast$ and set$$m=(u^{\epsilon_1})_{i_1j_1}\ldots (u^{\epsilon_r})_{i_rj_r}$$where we recall that $((u)_{ij})_{1\leq i,j\leq n}=(u_{ij})_{1\leq i,j\leq n}$  and $((u^*)_{ij})_{1\leq i,j\leq n}=(u_{ji}^*)_{1\leq i,j\leq n}$ by convention. Remark that we prefer to work with the word $m$ instead of the word $u^{\epsilon_1}_{i_1j_1}\ldots u^{\epsilon_r}_{i_rj_r}$, since the computations are easier.

Let us compute $(h\star_F \phi)(m).$
We have $(h\star_F \phi)(m)=(h\ast\phi)\circ\Delta(m)$ and
$$\Delta(u_{ij})=\sum_{k=1}^n u_{ik}^{(1)}u_{kj}^{(2)},\ \text{and}\ \Delta((u^*)_{ij})=\Delta(u^*_{ji})=\sum_{k=1}^n (u^*)_{ik}^{(2)}(u^*)_{kj}^{(1)},$$
where the exponent $(1)$ and $(2)$ indicate if the element is in the first leg of $U_{n}^{\nc}\sqcup U_{n}^{\nc}$ or in the second leg. So, when computing $\Delta(m)$, we obtain something of the form $\sum_{k_1,\ldots,k_r} m_{k_1,\ldots,k_r}$ where $m_{k_1,\ldots,k_r}$ are words of length $2r$ of the form $(u^{\epsilon_1})_{i_1k_1}(u^{\epsilon_1})_{k_1j_1}\cdots (u^{\epsilon_r})_{i_rk_r}(u^{\epsilon_r})_{k_rj_r}$ with the generators coming from both legs of $U_{n}^{\nc}\sqcup U_{n}^{\nc}$. More precisely, let us decompose $\{1,\ldots,2r\}=S\cup T$ where $S$ contains the positions of the generators which are in the first leg and $T$ contains the positions of the generators which are in the second leg, according to
\begin{align*}S&=\{2i-1:1\leq i \leq n,\epsilon(i)=\emptyset \}\cup\{2i:1\leq i \leq n,\epsilon(i)=\ast \},\\
T&=\{1,\ldots,2r\}\setminus S.
\end{align*}
We can develop the computation using the freeness of the legs:
\begin{align*}
&\hspace{-1cm}(h\ast\phi)\circ\Delta(m)=(h\ast\phi)\left(\sum_{k_1,\ldots,k_r} m_{k_1,\ldots,k_r}\right)\\
=&\sum_{k_1,\ldots,k_r\ }\sum_{\substack{\ \sigma\in NC(S),\ \mu\in NC(T)\\ \text{s.t.}\ \sigma\cup\mu\in NC(2r)}}\kappa^h_\sigma\Big((u^{\epsilon_1})_{i_1k_1},(u^{\epsilon_1})_{k_1j_1},\ldots ,(u^{\epsilon_r})_{i_rk_r},(u^{\epsilon_r})_{k_rj_r}\Big)\\
&\hspace{5cm}\cdot \kappa_\mu^\phi\Big((u^{\epsilon_1})_{i_1k_1},(u^{\epsilon_1})_{k_1j_1},\ldots ,(u^{\epsilon_r})_{i_rk_r},(u^{\epsilon_r})_{k_rj_r}\Big),\end{align*}
where we recall that, according to~\eqref{notationcum}, the free cumulant $\kappa^h_\sigma(\cdots)$ only involves the variables which correspond to indices in $S$ and $\kappa_\mu^\phi(\cdots)$ only involves the variables which correspond to indices in $T$.

Using Corollary~\ref{cyclicu}, we know that, whenever the $\epsilon_i$ are alternating and the indices are cyclic within the blocks of $\sigma\in NC(S)$, the quantity
$\kappa^h_\sigma((u^{\epsilon_1})_{i_1k_1},(u^{\epsilon_1})_{k_1j_1},\ldots ,(u^{\epsilon_r})_{i_rk_r},(u^{\epsilon_r})_{k_rj_r}$
does not depend on the indices $k_1,\ldots, k_r$. We denote it by $\kappa_\sigma^h$, and compute
\begin{align*}
&\hspace{-0.5cm}(h\ast\phi)\circ\Delta(m)\\
=&\sum_{\substack{\sigma\in NC(S),\ \mu\in NC(T)\\\text{s.t. } \sigma\text{ alternates the }\epsilon_i\ \\ \text{and }\sigma\cup\mu\in NC(2r)}}\sum_{\substack{k_1,\ldots,k_r\\\ \text{s.t. the indices are cyclic}\\\text{within eack block of }\sigma}}\kappa_\sigma^h\cdot  \kappa^\phi_\mu\Big((u^{\epsilon_1})_{i_1k_1},(u^{\epsilon_1})_{k_1j_1},\ldots ,(u^{\epsilon_r})_{i_rk_r},(u^{\epsilon_r})_{k_rj_r}\Big)
\end{align*}
Thanks to Proposition~\ref{separer}, we can sum over $\mu$ and we obtain
\begin{align}
&\hspace{-0.5cm}(h\ast\phi)\circ\Delta(m)\label{importanteqn}\\
=&\sum_{\substack{\sigma\in NC(S)\\\text{s.t. } \sigma\text{ alternates the }\epsilon_i}}\kappa_\sigma^h\cdot \sum_{\substack{k_1,\ldots,k_r\\\text{s.t. the indices are cyclic}\\\text{within eack block of }\sigma}} \phi_{K(\sigma)}\Big((u^{\epsilon_1})_{i_1k_1},(u^{\epsilon_1})_{k_1j_1},\ldots ,(u^{\epsilon_r})_{i_rk_r},(u^{\epsilon_r})_{k_rj_r}\Big).\nonumber
\end{align}
So let us now examine equation \eqref{importanteqn} in greater details. Because the blocks of $\sigma$ alternate the $\epsilon_i$, the blocks of $K(\sigma)$ must also alternate the $\epsilon_i$. One can convince himself on a few examples, but also find a full proof in Proposition~7.7. of~\cite{Nica1997}.
Now one has to understand how the cyclicity of the indices $i_1k_1,k_1j_1,\ldots ,i_rk_r,k_rj_r$ in the blocks of $\sigma$ is translated in terms of the blocks of $K(\sigma)$. For every block $B=\{r(1)\leq \ldots \leq r(q)\}$, we say that $r(1)$ and $r(q)$ are opposites in $B$.

A condition $k_q=k_{q'}$ for some $1\leq q<q'\leq r$ appears twice. Once in the case where $2q$ and $2q'-1$ are opposites in the same block of $\sigma$, which is equivalent to the fact that $2q-1$ and $2q'$ are consecutive in the same block of $K(\sigma)$ (it corresponds to the case $\epsilon_q=\ast$ and $\epsilon_{q'}=\emptyset$, see the Figure~\ref{astastunun}).\begin{figure}[h]
\begin{pspicture}(0,-1.004111)(5.094111,1.03)
\definecolor{color96}{rgb}{0.047058823529411764,0.054901960784313725,0.11372549019607843}
\definecolor{color198b}{rgb}{0.050980392156862744,0.06274509803921569,0.12156862745098039}
\psdots[dotsize=0.18,linecolor=color96](0.84,0.25)
\psdots[dotsize=0.18,linecolor=color96](1.44,0.25)
\psdots[dotsize=0.18,linecolor=color96](3.64,0.25)
\psdots[dotsize=0.18,linecolor=color96](4.24,0.25)
\psdots[dotsize=0.08,linecolor=color96](2.24,0.25)
\psdots[dotsize=0.08,linecolor=color96](2.44,0.25)
\psdots[dotsize=0.08,linecolor=color96](2.64,0.25)
\psdots[dotsize=0.08,linecolor=color96](2.84,0.25)
\psdots[dotsize=0.08,linecolor=color96](4.64,0.25)
\psdots[dotsize=0.08,linecolor=color96](4.84,0.25)
\psdots[dotsize=0.08,linecolor=color96](5.04,0.25)
\psdots[dotsize=0.08,linecolor=color96](0.44,0.25)
\psdots[dotsize=0.08,linecolor=color96](0.24,0.25)
\psdots[dotsize=0.08,linecolor=color96](0.04,0.25)
\usefont{T1}{ptm}{m}{n}
\rput(0.8203125,0.555){(2)}
\usefont{T1}{ptm}{m}{n}
\rput(1.4203124,0.555){(1)}
\usefont{T1}{ptm}{m}{n}
\rput(3.6203125,0.555){(1)}
\usefont{T1}{ptm}{m}{n}
\rput(4.220312,0.555){(2)}
\psline[linewidth=0.04cm,linecolor=color96,fillcolor=color198b](1.44,0.25)(1.44,-0.35)
\psline[linewidth=0.04cm,linecolor=color96,fillcolor=color198b](1.44,-0.35)(2.04,-0.35)
\psline[linewidth=0.04cm,linecolor=color96,fillcolor=color198b](3.64,0.25)(3.64,-0.35)
\psline[linewidth=0.04cm,linecolor=color96,fillcolor=color198b](3.64,-0.35)(3.04,-0.35)
\psdots[dotsize=0.08,linecolor=color96](2.64,-0.35)
\psdots[dotsize=0.08,linecolor=color96](2.44,-0.35)
\psdots[dotsize=0.08,linecolor=color96](2.24,-0.35)
\psdots[dotsize=0.08,linecolor=color96](2.84,-0.35)
\psdots[dotsize=0.08,linecolor=color96](0.84,0.25)
\psline[linewidth=0.04cm,linecolor=color96,fillcolor=color198b,linestyle=dashed,dash=0.16cm 0.16cm](0.84,0.25)(0.84,-0.95)
\psline[linewidth=0.04cm,linecolor=color96,fillcolor=color198b,linestyle=dashed,dash=0.16cm 0.16cm](0.84,-0.95)(2.04,-0.95)
\psline[linewidth=0.04cm,linecolor=color96,fillcolor=color198b,linestyle=dashed,dash=0.16cm 0.16cm](3.04,-0.95)(4.24,-0.95)
\psline[linewidth=0.04cm,linecolor=color96,fillcolor=color198b,linestyle=dashed,dash=0.16cm 0.16cm](4.24,0.25)(4.24,-0.95)
\psdots[dotsize=0.08,linecolor=color96](2.24,-0.95)
\psdots[dotsize=0.08,linecolor=color96](2.44,-0.95)
\psdots[dotsize=0.08,linecolor=color96](2.64,-0.95)
\psdots[dotsize=0.08,linecolor=color96](2.84,-0.95)
\psdots[dotsize=0.08,linecolor=color96](0.84,-0.95)
\psline[linewidth=0.04cm,linecolor=color96,fillcolor=color198b,linestyle=dashed,dash=0.16cm 0.16cm](0.84,-0.95)(0.24,-0.95)
\psline[linewidth=0.04cm,linecolor=color96,fillcolor=color198b,linestyle=dashed,dash=0.16cm 0.16cm](4.24,-0.95)(4.84,-0.95)
\usefont{T1}{ptm}{m}{n}
\rput(0.8504201,0.845){*}
\usefont{T1}{ptm}{m}{n}
\rput(1.4504201,0.845){*}
\end{pspicture} 
\caption{\label{astastunun}The case $k_q=k_{q'}$ in $\cdots (u^*)_{i_{q}k_{q}}^{(2)}(u^*)_{k_{q}j_{q}}^{(1)}\cdots u_{i_{q'}k_{q'}}^{(1)}u_{k_{q'}j_{q'}}^{(2)}\cdots $}
(the continuous line represents $\sigma$ while the dashed line represents $K(\sigma)$)
\end{figure}
The other case is when $2q-1$ and $2q'$ are consecutive in the same block of $\sigma$, which is equivalent to the fact that $2q$ and $2q'-1$ are opposites in the same block of $K(\sigma)$ (it corresponds to the case $\epsilon_q=\emptyset$ and $\epsilon_{q'}=\ast$, see Figure~\ref{astastdeuxdeux}).
\begin{figure}[h]
\begin{pspicture}(0,-1.2691112)(6.934111,1.295)
\psdots[dotsize=0.12](0.06,0.605)
\psdots[dotsize=0.12](0.46,0.605)
\psdots[dotsize=0.12](0.86,0.605)
\psdots[dotsize=0.2](1.46,0.605)
\psdots[dotsize=0.2](2.06,0.605)
\psdots[dotsize=0.12](2.86,0.605)
\psdots[dotsize=0.12](3.26,0.605)
\psdots[dotsize=0.12](3.66,0.605)
\psdots[dotsize=0.12](4.06,0.605)
\psdots[dotsize=0.2](4.86,0.605)
\psdots[dotsize=0.2](5.46,0.605)
\psdots[dotsize=0.12](6.06,0.605)
\psdots[dotsize=0.12](6.46,0.605)
\psdots[dotsize=0.12](6.86,0.605)
\usefont{T1}{ptm}{m}{n}
\rput(1.4403125,0.91){(1)}
\usefont{T1}{ptm}{m}{n}
\rput(2.0403125,0.91){(2)}
\usefont{T1}{ptm}{m}{n}
\rput(4.8403125,0.91){(2)}
\usefont{T1}{ptm}{m}{n}
\rput(5.4403124,0.91){(1)}
\usefont{T1}{ptm}{m}{n}
\rput(4.6990623,1.11){*}
\usefont{T1}{ptm}{m}{n}
\rput(5.2990623,1.11){*}
\psline[linewidth=0.04cm](1.46,0.605)(1.46,-1.195)
\psline[linewidth=0.04cm](1.46,-1.195)(5.46,-1.195)
\psline[linewidth=0.04cm](5.46,0.605)(5.46,-1.195)
\psline[linewidth=0.04cm](1.46,-1.195)(0.86,-1.195)
\psline[linewidth=0.04cm](5.46,-1.195)(6.06,-1.195)
\psline[linewidth=0.04cm,linestyle=dashed,dash=0.16cm 0.16cm](2.06,0.605)(2.06,-0.595)
\psline[linewidth=0.04cm,linestyle=dashed,dash=0.16cm 0.16cm](2.06,-0.595)(2.86,-0.595)
\psline[linewidth=0.04cm,linestyle=dashed,dash=0.16cm 0.16cm](4.0,-0.595)(4.86,-0.595)
\psline[linewidth=0.04cm,linestyle=dashed,dash=0.16cm 0.16cm](4.86,0.605)(4.86,-0.595)
\psdots[dotsize=0.12](0.46,-1.195)
\psdots[dotsize=0.12](0.06,-1.195)
\psdots[dotsize=0.12](6.46,-1.195)
\psdots[dotsize=0.12](6.86,-1.195)
\psdots[dotsize=0.12](3.265889,-0.595)
\psdots[dotsize=0.12](3.685889,-0.595)
\end{pspicture} 
\caption{\label{astastdeuxdeux}The case $k_q=k_{q'}$ in $\cdots u_{i_{q}k_{q}}^{(1)}u_{k_{q}j_{q}}^{(2)}\cdots (u^*)_{i_{q'}k_{q'}}^{(2)}(u^*)_{k_{q'}j_{q'}}^{(1)}\cdots $
}(the continuous line represents $\sigma$ while the dashed line represents $K(\sigma)$)
\end{figure}

Now, let us consider one block $B=\{r(1)\leq \ldots \leq r(q)\}\in K(\sigma)$. If $\epsilon(r(1))=\ast$, a case illustrated in Figure~\ref{caseast}, we have
\begin{align*}&\phi_B\Big((u^{\epsilon_1})_{i_1k_1},(u^{\epsilon_1})_{k_1j_1},\ldots ,(u^{\epsilon_r})_{i_rk_r},(u^{\epsilon_r})_{k_rj_r}\Big)\\
&=\phi\Big((u^*)_{i_{r(1)}k_{r(1)}}u_{k_{r(2)}j_{r(2)}}\cdots(u^*)_{i_{r(q-1)}k_{r(q-1)}} u_{k_{r(q)}j_{r(q)}}\Big)\\
&=\phi\Big(u^*_{k_{r(1)}i_{r(1)}}u_{k_{r(2)}j_{r(2)}}\cdots u^*_{k_{r(q-1)}i_{r(q-1)}} u_{k_{r(q)}j_{r(q)}}\Big)
\end{align*}
and summing over the indices $k_{r(1)}=k_{r(2)},k_{r(3)}=k_{r(4)},\cdots, k_{r(q-1)}=k_{r(q)}$ yields to
$$\delta_{i_{r(1)}j_{r(2)}}\ldots  \delta_{i_{r(q-1)}j_{r(q)}}.$$
\begin{figure}[h]
\begin{pspicture}(0,-1.09925)(8.430312,1.09925)
\definecolor{color96}{rgb}{0.047058823529411764,0.054901960784313725,0.11372549019607843}
\definecolor{color228b}{rgb}{0.050980392156862744,0.06274509803921569,0.12156862745098039}
\psdots[dotsize=0.12,linecolor=color96](0.1853125,0.36075002)
\psdots[dotsize=0.12,linecolor=color96](0.7853125,0.36075002)
\psdots[dotsize=0.12,linecolor=color96](2.3853126,0.36075002)
\psdots[dotsize=0.12,linecolor=color96](2.9853125,0.36075002)
\psdots[dotsize=0.12,linecolor=color96](7.5853124,0.36075002)
\psdots[dotsize=0.12,linecolor=color96](8.185312,0.36075002)
\psline[linewidth=0.04cm,linecolor=color96,fillcolor=color228b,linestyle=dashed,dash=0.16cm 0.16cm](0.1853125,0.36075002)(0.1853125,-1.03925)
\psline[linewidth=0.04cm,linecolor=color96,fillcolor=color228b,linestyle=dashed,dash=0.16cm 0.16cm](0.1853125,-1.03925)(2.9853125,-1.03925)
\psline[linewidth=0.04cm,linecolor=color96,fillcolor=color228b,linestyle=dashed,dash=0.16cm 0.16cm](2.9853125,0.36075002)(2.9853125,-1.03925)
\psline[linewidth=0.04cm,linecolor=color96,fillcolor=color228b,linestyle=dashed,dash=0.16cm 0.16cm](2.9853125,-1.03925)(4.1853123,-1.03925)
\psline[linewidth=0.04cm,linecolor=color96,fillcolor=color228b,linestyle=dashed,dash=0.16cm 0.16cm](6.7853127,-1.03925)(8.185312,-1.03925)
\psline[linewidth=0.04cm,linecolor=color96,fillcolor=color228b,linestyle=dashed,dash=0.16cm 0.16cm](8.185312,0.36075002)(8.185312,-1.03925)
\psline[linewidth=0.04cm,linecolor=color96,fillcolor=color228b](0.7853125,0.36075002)(0.7853125,-0.43925)
\psline[linewidth=0.04cm,linecolor=color96,fillcolor=color228b](0.7853125,-0.43925)(2.3853126,-0.43925)
\psline[linewidth=0.04cm,linecolor=color96,fillcolor=color228b](2.3853126,0.36075002)(2.3853126,-0.43925)
\psline[linewidth=0.04cm,linecolor=color96,fillcolor=color228b](7.5853124,0.36075002)(7.5853124,-0.43925)
\psline[linewidth=0.04cm,linecolor=color96,fillcolor=color228b](7.5853124,-0.43925)(6.9853125,-0.43925)
\psdots[dotsize=0.08,linecolor=color96](5.9853125,-1.03925)
\psdots[dotsize=0.08,linecolor=color96](5.7853127,-1.03925)
\psdots[dotsize=0.08,linecolor=color96](5.5853124,-1.03925)
\psdots[dotsize=0.08,linecolor=color96](5.3853126,-1.03925)
\psdots[dotsize=0.08,linecolor=color96](5.1853123,-1.03925)
\psdots[dotsize=0.08,linecolor=color96](4.9853125,-1.03925)
\psdots[dotsize=0.08,linecolor=color96](4.9853125,-0.43925)
\psdots[dotsize=0.08,linecolor=color96](5.1853123,-0.43925)
\psdots[dotsize=0.08,linecolor=color96](5.3853126,-0.43925)
\psdots[dotsize=0.08,linecolor=color96](5.5853124,-0.43925)
\psdots[dotsize=0.08,linecolor=color96](5.7853127,-0.43925)
\psdots[dotsize=0.08,linecolor=color96](5.9853125,-0.43925)
\psdots[dotsize=0.08,linecolor=color96](6.1853123,-0.43925)
\psdots[dotsize=0.08,linecolor=color96](6.3853126,-0.43925)
\usefont{T1}{ptm}{m}{n}
\rput(0.18046875,0.66575){(2)}
\usefont{T1}{ptm}{m}{n}
\rput(0.78046876,0.66575){(1)}
\usefont{T1}{ptm}{m}{n}
\rput(2.3804688,0.66575){(1)}
\usefont{T1}{ptm}{m}{n}
\rput(2.9804688,0.66575){(2)}
\usefont{T1}{ptm}{m}{n}
\rput(7.5804687,0.66575){(1)}
\usefont{T1}{ptm}{m}{n}
\rput(8.180469,0.66575){(2)}
\rput(0.1853125,0.96075){*}
\rput(0.7853125,0.96075){*}
\psdots[dotsize=0.12,linecolor=color96](4.3853126,0.36075002)
\psdots[dotsize=0.12,linecolor=color96](3.7853124,0.36075002)
\psline[linewidth=0.04cm,linecolor=color96,fillcolor=color228b](4.3853126,0.36075002)(4.3853126,-0.43925)
\psline[linewidth=0.04cm,linecolor=color96,fillcolor=color228b](4.3853126,-0.43925)(4.7853127,-0.43925)
\psline[linewidth=0.04cm,linecolor=color96,fillcolor=color228b](7.5853124,-0.43925)(6.5853124,-0.43925)
\psline[linewidth=0.04cm,linecolor=color96,fillcolor=color228b,linestyle=dashed,dash=0.16cm 0.16cm](3.7853124,0.36075002)(3.7853124,-1.03925)
\usefont{T1}{ptm}{m}{n}
\rput(3.7804687,0.66575){(2)}
\usefont{T1}{ptm}{m}{n}
\rput(4.380469,0.66575){(1)}
\rput(3.7853124,0.96075){*}
\rput(4.3853126,0.96075){*}
\end{pspicture} 
\caption{\label{caseast}The case $k_{r(1)}=k_{r(2)},k_{r(3)}=k_{r(4)},\cdots, k_{r(q-1)}=k_{r(q)}$ in $B$}(the continuous line represents $\sigma$ while the dashed line represents $K(\sigma)$)
\end{figure}

As well, if $\epsilon(r(1))=\emptyset$, a case illustrated in Figure~\ref{caseempty}, we have by using the fact that $\phi$ is tracial,
\begin{align*}&\phi_B\Big((u^{\epsilon_1})_{i_1k_1},(u^{\epsilon_1})_{k_1j_1},\ldots ,(u^{\epsilon_r})_{i_rk_r},(u^{\epsilon_r})_{k_rj_r}\Big)\\
&=\phi\Big(u_{k_{r(1)}j_{r(1)}}(u^*)_{i_{r(2)}k_{r(2)}}\cdots u_{k_{r(q-1)}j_{r(q-1)}}(u^*)_{i_{r(q)}k_{r(q)}}\Big)\\
&=\phi\Big(u^*_{k_{r(q)}i_{r(q)}}u_{k_{r(1)}j_{r(1)}}u^*_{k_{r(2)}i_{r(2)}}\cdots u_{k_{r(q-1)}j_{r(q-1)}}\Big)
\end{align*}
and summing over the indices $k_{r(q)}=k_{r(1)},k_{r(2)}=k_{r(3)},\cdots, k_{r(q-2)}=k_{r(q-1)}$ yields to
$$\delta_{i_{r(q)}j_{r(1)}}\ldots  \delta_{i_{r(q-2)}j_{r(q-1)}}.$$
\begin{figure}[h]
\begin{pspicture}(0,-1.4992499)(9.630313,1.4992499)
\definecolor{color379}{rgb}{0.047058823529411764,0.054901960784313725,0.11372549019607843}
\definecolor{color428b}{rgb}{0.050980392156862744,0.06274509803921569,0.12156862745098039}
\psdots[dotsize=0.12,linecolor=color379](0.1853125,0.76075)
\psdots[dotsize=0.12,linecolor=color379](0.7853125,0.76075)
\psdots[dotsize=0.12,linecolor=color379](2.3853126,0.76075)
\psdots[dotsize=0.12,linecolor=color379](2.9853125,0.76075)
\psdots[dotsize=0.12,linecolor=color379](4.7853127,0.76075)
\psdots[dotsize=0.12,linecolor=color379](5.3853126,0.76075)
\psdots[dotsize=0.12,linecolor=color379](8.785313,0.76075)
\psdots[dotsize=0.12,linecolor=color379](9.385312,0.76075)
\usefont{T1}{ptm}{m}{n}
\rput(0.18046875,1.06575){(1)}
\usefont{T1}{ptm}{m}{n}
\rput(0.78046876,1.06575){(2)}
\usefont{T1}{ptm}{m}{n}
\rput(2.3804688,1.06575){(2)}
\usefont{T1}{ptm}{m}{n}
\rput(2.9804688,1.06575){(1)}
\usefont{T1}{ptm}{m}{n}
\rput(4.780469,1.06575){(1)}
\usefont{T1}{ptm}{m}{n}
\rput(5.380469,1.06575){(2)}
\usefont{T1}{ptm}{m}{n}
\rput(8.780469,1.06575){(2)}
\usefont{T1}{ptm}{m}{n}
\rput(9.380468,1.06575){(1)}
\psline[linewidth=0.04cm,linecolor=color379,fillcolor=color428b,linestyle=dashed,dash=0.16cm 0.16cm](0.7853125,0.76075)(0.7853125,-0.63925)
\psline[linewidth=0.04cm,linecolor=color379,fillcolor=color428b,linestyle=dashed,dash=0.16cm 0.16cm](2.3853126,0.76075)(2.3853126,-0.63925)
\psline[linewidth=0.04cm,linecolor=color379,fillcolor=color428b,linestyle=dashed,dash=0.16cm 0.16cm](0.7853125,-0.63925)(2.3853126,-0.63925)
\psline[linewidth=0.04cm,linecolor=color379,fillcolor=color428b,linestyle=dashed,dash=0.16cm 0.16cm](2.3853126,-0.63925)(5.3853126,-0.63925)
\psline[linewidth=0.04cm,linecolor=color379,fillcolor=color428b,linestyle=dashed,dash=0.16cm 0.16cm](5.3853126,0.76075)(5.3853126,-0.63925)
\psline[linewidth=0.04cm,linecolor=color379,fillcolor=color428b,linestyle=dashed,dash=0.16cm 0.16cm](5.3853126,-0.63925)(5.9853125,-0.63925)
\psline[linewidth=0.04cm,linecolor=color379,fillcolor=color428b,linestyle=dashed,dash=0.16cm 0.16cm](7.9853125,-0.63925)(8.785313,-0.63925)
\psline[linewidth=0.04cm,linecolor=color379,fillcolor=color428b,linestyle=dashed,dash=0.16cm 0.16cm](8.785313,0.76075)(8.785313,-0.63925)
\psline[linewidth=0.04cm,linecolor=color379,fillcolor=color428b](0.1853125,0.76075)(0.1853125,-1.43925)
\psline[linewidth=0.04cm,linecolor=color379,fillcolor=color428b](0.1853125,-1.43925)(5.9853125,-1.43925)
\psline[linewidth=0.04cm,linecolor=color379,fillcolor=color428b](7.9853125,-1.43925)(9.385312,-1.43925)
\psline[linewidth=0.04cm,linecolor=color379,fillcolor=color428b](9.385312,0.76075)(9.385312,-1.43925)
\psline[linewidth=0.04cm,linecolor=color379,fillcolor=color428b](2.9853125,0.76075)(2.9853125,-0.039249994)
\psline[linewidth=0.04cm,linecolor=color379,fillcolor=color428b](2.9853125,-0.039249994)(4.7853127,-0.039249994)
\psline[linewidth=0.04cm,linecolor=color379,fillcolor=color428b](4.7853127,0.76075)(4.7853127,-0.039249994)
\psdots[dotsize=0.08,linecolor=color379](6.3853126,-1.43925)
\psdots[dotsize=0.08,linecolor=color379](6.5853124,-1.43925)
\psdots[dotsize=0.08,linecolor=color379](6.7853127,-1.43925)
\psdots[dotsize=0.08,linecolor=color379](6.9853125,-1.43925)
\psdots[dotsize=0.08,linecolor=color379](7.1853123,-1.43925)
\psdots[dotsize=0.08,linecolor=color379](7.3853126,-1.43925)
\psdots[dotsize=0.08,linecolor=color379](7.5853124,-1.43925)
\psdots[dotsize=0.08,linecolor=color379](7.7853127,-1.43925)
\psdots[dotsize=0.08,linecolor=color379](6.1853123,-0.63925)
\psdots[dotsize=0.08,linecolor=color379](6.3853126,-0.63925)
\psdots[dotsize=0.08,linecolor=color379](6.5853124,-0.63925)
\psdots[dotsize=0.08,linecolor=color379](6.7853127,-0.63925)
\psdots[dotsize=0.08,linecolor=color379](6.9853125,-0.63925)
\psdots[dotsize=0.08,linecolor=color379](7.1853123,-0.63925)
\psdots[dotsize=0.08,linecolor=color379](7.3853126,-0.63925)
\psdots[dotsize=0.08,linecolor=color379](7.5853124,-0.63925)
\psdots[dotsize=0.08,linecolor=color379](7.7853127,-0.63925)
\rput(2.3853126,1.36075){*}
\rput(2.9753125,1.36075){*}
\rput(8.715313,1.36075){*}
\rput(9.385312,1.36075){*}
\end{pspicture}
\caption{\label{caseempty}The case $k_{r(q)}=k_{r(1)},k_{r(2)}=k_{r(3)},\cdots, k_{r(q-2)}=k_{r(q-1)}$ in $B$}(the continuous line represents $\sigma$ while the dashed line represents $K(\sigma)$)
\end{figure}

Those computations shows that the quantity $(h\ast\phi)\circ\Delta(m)$ expressed as~\eqref{importanteqn} does not depend on the choice of $\phi$, and in particular, we can replace $\phi$ by $\delta$ and obtain $(h\ast\phi)\circ\Delta(m)=(h\ast\delta)\circ\Delta(m)$. Since $m$ is arbitrary, we have $(h\ast\phi)\circ\Delta=(h\ast\delta)\circ\Delta$. Now, let us remark that $h\ast\delta$ and $h\circ (\Id \sqcup \delta)$ are two unital linear functionals which vanish on products in $U_{n}^{\nc}\sqcup U_{n}^{\nc}$ which alternates elements from $\ker(h)$ in the first leg and elements from $\ker(\delta)$ in the second leg. As a consequence, we have $h\ast\delta=h\circ (\Id \sqcup \delta)$, and we can write $(h\ast\phi)\circ\Delta=(h\ast\delta)\circ\Delta=h\circ (\Id \sqcup \delta)\circ\Delta=h$. This prove that $h$ is a Haar trace, thanks to Proposition~\ref{rlhaar}.
\end{proof}

 The free Haar state can be computed with the help of the following proposition, which is just a reformulation of Corollary~\ref{cyclicu}.

\begin{prop}When $U_{n}^{\nc}$ is endowed with its Haar trace for the free convolution, the free cumulants of $\{u_{ij}\}_{1\leq i,j\leq n}$ are given as follows.

Let $1\leq i_1,j_1,\ldots,i_r,j_r\leq n$. We have\label{cyclicuu}
\begin{eqnarray*}
\kappa_r\Big(u_{i_1j_1},u^*_{i_2j_1},u_{i_2j_2},u^*_{i_3j_2},\ldots,u_{i_r j_r},u_{i_1j_r}^*\Big)&=&n^{1-2r}(-1)^{r-1}C_{r-1}\\
\text{and }\ \ \kappa_r\Big(u_{i_1j_1}^*,u_{i_1j_2},u_{i_2j_2}^*,u_{i_2j_3},\ldots,u_{i_r j_r}^*,u_{i_rj_1}\Big)&=&n^{1-2r}(-1)^{r-1}C_{r-1}
\end{eqnarray*}
where $C_r=(2r)!/(r+1)!r!$ designate the Catalan numbers. Moreover, the free cumulants which are not given in such a way are equal to $0$.
\end{prop}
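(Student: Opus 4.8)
The plan is to read Proposition~\ref{cyclicuu} off Corollary~\ref{cyclicu} by the mere change of notation $(u)_{ij}=u_{ij}$, $(u^*)_{ij}=u_{ji}^{*}$, i.e. $u_{kl}^{*}=(u^*)_{lk}$. Concretely, I would start from an arbitrary mixed $*$-cumulant $\kappa_m(v_1,\dots,v_m)$ of the generators, each $v_\ell$ being $u_{p_\ell q_\ell}$ or $u_{p_\ell q_\ell}^{*}$, and replace every starred generator $u_{p_\ell q_\ell}^{*}$ by $(u^*)_{q_\ell p_\ell}$, so that the cumulant takes the shape $\kappa_m\big((u^{\epsilon_1})_{a_1 b_1},\dots,(u^{\epsilon_m})_{a_m b_m}\big)$ to which Corollary~\ref{cyclicu} applies: it vanishes unless $m$ is even, the $\epsilon_\ell$ are alternating, and the index string is cyclic in the sense $b_{\ell-1}=a_\ell$ for $2\le\ell\le m$ and $a_1=b_m$; in that case it equals $n^{1-m}(-1)^{m/2-1}C_{m/2-1}$.

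The next step is to translate these three surviving conditions back into a statement about the word $v_1\cdots v_m$. Alternation of the $\epsilon_\ell$ says precisely that $v_1\cdots v_m$ alternates unstarred and starred generators, so $m=2r$ and either $v_1$ is unstarred or $v_1$ is starred. In the first case I would write $v_{2k-1}=u_{p_{2k-1} q_{2k-1}}$ and $v_{2k}=u_{p_{2k} q_{2k}}^{*}=(u^*)_{q_{2k} p_{2k}}$, and check that the equations $b_{\ell-1}=a_\ell$, $a_1=b_m$ chain the indices exactly so that $v_{2k-1}=u_{i_k j_k}$ and $v_{2k}=u_{i_{k+1} j_k}^{*}$ for suitable $i_1,\dots,i_r,j_1,\dots,j_r$ (the index $i$ being read cyclically), giving the first family $v_1\cdots v_{2r}=u_{i_1 j_1}u_{i_2 j_1}^{*}u_{i_2 j_2}u_{i_3 j_2}^{*}\cdots u_{i_r j_r}u_{i_1 j_r}^{*}$. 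The case $v_1$ starred is the identical computation and yields $v_{2k-1}=u_{i_k j_k}^{*}$, $v_{2k}=u_{i_k j_{k+1}}$, i.e. the second family $u_{i_1 j_1}^{*}u_{i_1 j_2}u_{i_2 j_2}^{*}u_{i_2 j_3}\cdots u_{i_r j_r}^{*}u_{i_r j_1}$. Conversely each of these two words manifestly satisfies the three conditions, so Corollary~\ref{cyclicu} returns in both cases the value $n^{1-m}(-1)^{m/2-1}C_{m/2-1}$ with $m=2r$, namely $n^{1-2r}(-1)^{r-1}C_{r-1}$, while every mixed $*$-cumulant of the generators not of one of these two shapes is zero.

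There is no real obstacle here beyond keeping the two chained index patterns $(i_1 j_1),(i_2 j_1),(i_2 j_2),\dots$ and $(i_1 j_1),(i_1 j_2),(i_2 j_2),\dots$ straight; the only point I would flag is the coherence of the clause ``the free cumulants which are not given in such a way are equal to $0$''. Since $h$ is tracial, its free cumulants are invariant under cyclic permutation of their arguments, so a cyclic rotation of a word of the first type should still carry a nonzero cumulant — and indeed one checks that a one-step cyclic rotation sends a first-type word to a second-type word with relabelled indices and vice versa, so the union of the two families is stable under cyclic rotation and no contradiction arises. I would present this observation, together with the elementary verification that both families meet $b_{\ell-1}=a_\ell$ and $a_1=b_m$, as the only non-mechanical portion of the proof.
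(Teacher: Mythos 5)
Your proposal is correct and matches the paper's intent exactly: the paper gives no separate argument and simply declares Proposition~\ref{cyclicuu} to be ``a reformulation of Corollary~\ref{cyclicu}'', which is precisely the notational translation $(u^*)_{ij}=u_{ji}^*$ you carry out, and your index-chasing for the two cyclic families and the vanishing of all other cumulants is sound. The remark on stability of the two families under cyclic rotation is a nice consistency check but not needed.
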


In~\cite{McClanahan1992}, Mc Clanahan defines a state on $U_{n}^{\nc}$ which is in fact equal to our free Haar trace. More precisely, let us denote by $C(\mathbb{U})$ the algebra of continuous functions on the unit complex circle $\mathbb{U}$ and by $\mathcal{M}_n(\mathbb{C})'$ the relative commutant of $\mathcal{M}_n(\mathbb{C})$ in $C(\mathbb{U})\sqcup\mathcal{M}_n(\mathbb{C})$. It is straightforward to verify that there exists a unique $*$-homomorphism $\varphi:U_{n}^{\nc}\to \mathcal{M}_n(\mathbb{C})'$ such that
$$\varphi(u_{ij})=\sum_{1\leq k \leq n}E_{ki}\Id_{\mathbb{U}}E_{jk} .$$
Endowing $C(\mathbb{U})$ with the uniform measure $h$ on the unit circle gives us a state $(\tr_n\ast h)_{|\mathcal{M}_n(\mathbb{C})'}\circ \varphi$ on $U_{n}^{\nc}$.

\begin{prop}The state $(\tr_n\ast h)_{|\mathcal{M}_n(\mathbb{C})'}\circ \varphi$ of Mc Clanahan is the Haar trace for the free convolution on $U_{n}^{\nc}$.\label{McC}
\end{prop}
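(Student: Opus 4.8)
The plan is to recognize McClanahan's state as an instance of the construction $[n(\phi\ast\tr_n)]\circ j_U$ from Proposition-Definition~\ref{haarstate} with $U$ a Haar unitary, and then to invoke the proof of Theorem~\ref{Haartracetheorem} in the free case, which establishes that any state of this form is the free Haar trace (and in particular does not depend on the choice of Haar unitary). The natural candidate is $\mathcal{A}=C(\mathbb{U})$ with $\phi=h$ the uniform measure and $U=\Id_\mathbb{U}$, which is indeed a Haar unitary since it is a unitary and $h(\Id_\mathbb{U}^{k})=\int_\mathbb{U}z^{k}\,dh(z)=\delta_{k0}$ for $k\geq 0$.

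The technical heart of the argument is the identification of the corner $E_{11}\mathcal{B}E_{11}$ with the relative commutant $\mathcal{M}_n(\mathbb{C})'$ inside $\mathcal{B}:=C(\mathbb{U})\sqcup\mathcal{M}_n(\mathbb{C})$. I would check that
$$\Psi:E_{11}\mathcal{B}E_{11}\longrightarrow\mathcal{M}_n(\mathbb{C})',\qquad x\longmapsto\sum_{k=1}^{n}E_{k1}xE_{1k},$$
is a $*$-isomorphism, with inverse $y\mapsto E_{11}yE_{11}$: using $\sum_kE_{kk}=I_n$ and the relations $E_{ab}E_{cd}=\delta_{bc}E_{ad}$ one verifies that $\Psi(x)$ commutes with $\mathcal{M}_n(\mathbb{C})$, that $E_{1a}\Psi(x)E_{b1}=\delta_{ab}x$, that $\Psi(E_{11}yE_{11})=y$ for $y\in\mathcal{M}_n(\mathbb{C})'$, and that $\Psi$ is multiplicative and $*$-preserving. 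Crucially, since $h$ and $\tr_n$ are traces, the free product state $h\ast\tr_n$ on $\mathcal{B}$ is a trace, so that
$$(h\ast\tr_n)\bigl(\Psi(x)\bigr)=\sum_{k=1}^{n}(h\ast\tr_n)(E_{k1}xE_{1k})=\sum_{k=1}^{n}(h\ast\tr_n)(xE_{11})=n\,(h\ast\tr_n)(x),$$
i.e.\ $\Psi$ carries the state $n(h\ast\tr_n)|_{E_{11}\mathcal{B}E_{11}}$ to $(h\ast\tr_n)|_{\mathcal{M}_n(\mathbb{C})'}$. This compatibility of $\Psi$ with the two states, which rests entirely on traciality, is the step I expect to be the real content; everything around it is bookkeeping with matrix units.

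To conclude, I would compute $\Psi\bigl(j_{\Id_\mathbb{U}}(u_{ij})\bigr)=\sum_kE_{k1}(E_{1i}\Id_\mathbb{U}E_{j1})E_{1k}=\sum_kE_{ki}\Id_\mathbb{U}E_{jk}=\varphi(u_{ij})$, whence $\varphi=\Psi\circ j_{\Id_\mathbb{U}}$ by the universal property of $U_n^{\nc}$. Combining this with the state-compatibility of $\Psi$ gives $(\tr_n\ast h)|_{\mathcal{M}_n(\mathbb{C})'}\circ\varphi=\bigl[n(h\ast\tr_n)\bigr]\circ j_{\Id_\mathbb{U}}$ (the free product state of $\tr_n$ and $h$ being symmetric under the canonical flip $\mathcal{M}_n(\mathbb{C})\sqcup C(\mathbb{U})\simeq C(\mathbb{U})\sqcup\mathcal{M}_n(\mathbb{C})$), which is precisely the state $[n(\phi\ast\tr_n)]\circ j_U$ studied in Section~\ref{freecase} for $\mathcal{A}=C(\mathbb{U})$, $\phi=h$ and the Haar unitary $U=\Id_\mathbb{U}$. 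By the proof of Theorem~\ref{Haartracetheorem} this state is the free Haar trace on $U\langle n\rangle$, as desired.
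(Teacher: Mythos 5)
Your proof is correct and follows essentially the same route as the paper's: the paper likewise introduces the map $A\mapsto\sum_{k}E_{k1}AE_{1k}$ as a $*$-homomorphism of noncommutative probability spaces from $\bigl(E_{11}(C(\mathbb{U})\sqcup\mathcal{M}_n(\mathbb{C}))E_{11},\,n(h\ast\tr_n)\bigr)$ to $\bigl(\mathcal{M}_n(\mathbb{C})',(\tr_n\ast h)_{|\mathcal{M}_n(\mathbb{C})'}\bigr)$, observes that $\varphi$ is this map composed with $j_{\Id_{\mathbb{U}}}$ for the Haar unitary $\Id_{\mathbb{U}}$, and concludes by the construction of the free Haar trace from Section~\ref{freecase}. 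You only supply details the paper leaves implicit, namely the verification that the map is a $*$-isomorphism, the traciality argument behind the state identity, and the flip between the two orderings of the free product.
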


\begin{proof}Let us first observe the $*$-homomorphism of noncommutative probability spaces (where $\mathcal{A}=C(\mathbb{U})$ equipped with Haar measure):
$$\begin{array}{rcl}\tilde{\varphi}:\Big(E_{11}(\mathcal{A}\sqcup \mathcal{M}_n(\mathbb{C}))E_{11} ,n(\phi\ast \tr_n)\Big)& \to& \Big(\mathcal{M}_n(\mathbb{C})',(\tr_n\ast h)_{|\mathcal{M}_n(\mathbb{C})'}\Big) \\A &\mapsto & \displaystyle\sum_{1\leq k\leq n}E_{k1}AE_{1k} 
\end{array}$$
which follows from the equality $ \phi\ast \tr_n(\sum_kE_{k1}AE_{1k})=n\ \phi\ast \tr_n(A)$ for all element $A$ of $E_{11}(\mathcal{A}\sqcup \mathcal{M}_n(\mathbb{C}))E_{11}$. Observe also that $\Id_{\mathbb{U}}$ is a Haar unitary element $U$ of $(C(\mathbb{U}),h)$. The result follows from the equality $\varphi=\tilde{\varphi}\circ j_U$ which shows that the state of Mc Clanahan $(\tr_n\ast h)_{|\mathcal{M}_n(\mathbb{C})'}\circ \varphi$ is exactly the Haar trace $[n(\phi\ast \tr_n)] \circ j_U=(\tr_n\ast h)_{|\mathcal{M}_n(\mathbb{C})'}\circ \tilde{\varphi}\circ j_U$.
\end{proof}

\subsection{The tensor Haar trace}
In this section, we prove that there exists a tensor Haar trace.

Let us define the state which will be the tensor Haar trace. It is constructed via a very different method than the free Haar trace. We consider the Hilbert space $H=\ell^2(\mathbb{Z})\otimes\bigotimes_{k\in\mathbb{Z}}\mathcal{M}_n(\mathbb{C})$, where $\ell^2(\mathbb{Z})$ is Hilbert space of square-summable families of complex numbers indexed by $\mathbb{Z}$ and $\bigotimes_{k\in\mathbb{Z}}\mathcal{M}_n(\mathbb{C})$ is the infinite tensor product of copies of the Hilbert space $\mathcal{M}_n(\mathbb{C})$, where the number of matrices different from $I_n$ is finite and the scalar product on $\mathcal{M}_n(\mathbb{C})$ is given by $\tr_n(A^*B)=\Tr(A^*B)/n$.

For all $1\leq i,j\leq n$, we define the following bounded operator on $H$ by setting, for all $\delta_k\otimes\bigotimes_{l\in\mathbb{Z}}M_l\in H$,
$$U_{ij}(\delta_k\otimes(\ldots\otimes M_{k-1}\otimes M_k\otimes M_{k+1}\otimes\ldots))=\delta_{k+1}\otimes(\ldots \otimes M_{k-1}\otimes E_{ji}M_k\otimes M_{k+1}\otimes\ldots)$$
and therefore its adjoint, given by
$$U_{ij}^*(\delta_k\otimes(\ldots\otimes M_{k-1}\otimes M_k\otimes M_{k+1}\otimes\ldots))=\delta_{k-1}\otimes(\ldots\otimes E_{ij}M_{k-1}\otimes M_{k}\otimes M_{k+1}\otimes\ldots).$$
We introduce $\Omega=\delta_0\otimes\bigotimes_{k\in \Z} I_n$ and the state on the algebra $B(H)$ of bounded operators on $H$ given by
$ A\mapsto \langle \Omega,A\Omega\rangle.$
The operators $U_{ij}$ verify that $\sum_{k=1}^n U_{ki}^*U_{kj}=\delta_{ij}=\sum_{k=1}^n U_{ik}U_{jk}^*$ and so the quantum random variable over  $j:U_{n}^{\nc}\ni u_{ij}\mapsto U_{ij}\in B(H)$ is well-defined. It induces a state $h$ on $U\langle n\rangle$, given for all $a\in U_{n}^{\nc}$ by
$$h(a)= \langle \Omega,j(a)\Omega\rangle.$$
Let us compute first the value of $h$, thanks to the following lemmas.
\begin{lm}For all $1\leq i_1,j_1,\ldots ,i_r,j_r\leq n$, we have\label{alternating}
\begin{eqnarray*}
  h(u_{i_1j_1}u_{i_2j_2}^*\ldots u_{i_{r-1}j_{r-1}}u_{i_rj_r}^*)&=&\frac{1}{n}\delta_{i_1i_2}\delta_{j_2j_3}\delta_{i_3i_4}\ldots\delta_{i_{r-1}i_r}\delta_{j_rj_1},\\
  h(u_{i_1j_1}^*u_{i_2j_2}\ldots u_{i_{r-1}j_{r-1}}^*u_{i_rj_r})&=&\frac{1}{n}\delta_{j_1j_2}\delta_{i_2i_3}\delta_{j_3j_4}\ldots\delta_{j_{r-1}j_r}\delta_{i_ri_1}.
\end{eqnarray*}
\end{lm}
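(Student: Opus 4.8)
The plan is to compute $h$ directly from its definition $h(a)=\langle\Omega,j(a)\Omega\rangle$ by letting the product of operators $U_{i_kj_k}^{\epsilon_k}$ act on $\Omega$ from the right; the computation will telescope. Two structural facts drive it. First, $U_{ij}$ acts on the $k$-th tensor slot and sends the $\ell^2(\mathbb{Z})$-index $k$ to $k+1$, whereas $U_{ij}^*$ acts on the $(k-1)$-th slot and sends $k$ to $k-1$; since in each of the two words the stars alternate, the slot being modified never moves (it is the slot $-1$ for the first word and the slot $0$ for the second), while the index merely oscillates and returns to $0$ after the $r$ operators — here $r$ is necessarily even — so that the resulting vector pairs nontrivially with $\Omega$. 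Second, each operator left-multiplies that single active matrix slot by a matrix unit $E_{ab}$, and the rule $E_{ab}E_{cd}=\delta_{bc}E_{ad}$ yields one Kronecker delta and replaces the matrix unit by a new one.

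Carrying this out for the first word: $U_{i_rj_r}^*$ turns the active slot (initially $I_n$) into $E_{i_rj_r}$, then $U_{i_{r-1}j_{r-1}}$ into $E_{j_{r-1}i_{r-1}}E_{i_rj_r}=\delta_{i_{r-1}i_r}E_{j_{r-1}j_r}$, then $U_{i_{r-2}j_{r-2}}^*$ into $\delta_{i_{r-1}i_r}\delta_{j_{r-2}j_{r-1}}E_{i_{r-2}j_r}$, and so on; after all $r$ operators the active slot equals $\delta_{i_1i_2}\delta_{j_2j_3}\delta_{i_3i_4}\cdots\delta_{i_{r-1}i_r}\cdot E_{j_1j_r}$, every other slot is $I_n$ and the index is $0$. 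Pairing with $\Omega$ contributes the extra factor $\langle I_n,E_{j_1j_r}\rangle=\tr_n(E_{j_1j_r})=\tfrac1n\delta_{j_1j_r}$, which gives exactly $\tfrac1n\delta_{i_1i_2}\delta_{j_2j_3}\cdots\delta_{i_{r-1}i_r}\delta_{j_rj_1}$. For the second word the computation is symmetric: now $U_{i_rj_r}$ acts first on the slot $0$, the active slot stays equal to $0$, the same telescoping produces $\delta_{j_1j_2}\delta_{i_2i_3}\cdots\delta_{j_{r-1}j_r}\cdot E_{i_1i_r}$ there, and pairing with $\Omega$ yields the claimed $\tfrac1n\delta_{j_1j_2}\delta_{i_2i_3}\cdots\delta_{j_{r-1}j_r}\delta_{i_ri_1}$.

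I do not expect a genuine obstacle; the only delicate point is the bookkeeping of indices along the telescoping — in particular verifying that the active tensor slot genuinely stays fixed (this is where the alternation of stars is used) and tracking which index gets contracted at each step. One could instead argue by induction on $r$, stripping off the two outermost letters of the word and combining the explicit action on $\Omega$ with the relations $\sum_k U_{ik}^*U_{kj}=\delta_{ij}=\sum_k U_{ik}U_{jk}^*$, but the direct computation above seems the most transparent.
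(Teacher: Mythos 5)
Your proof is correct and follows essentially the same route as the paper: apply the word of operators to $\Omega$, observe that only one tensor slot (level $-1$ for the first word, level $0$ for the second) accumulates a product of matrix units while the $\ell^2(\mathbb{Z})$-index returns to $0$, and pair with $\Omega$ to extract $\tr_n$ of that product, yielding the factor $\tfrac1n$ and the chain of Kronecker deltas. The paper simply records the final vector in one line rather than telescoping step by step, but the computation is identical.
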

\begin{proof}
We have
$$ U_{i_1j_1}U_{i_2j_2}^*\ldots U_{i_{r-1}j_{r-1}}U_{i_rj_r}^*(\Omega)=\delta_0\otimes (\cdots \otimes I_n\otimes \underbrace{E_{j_1i_1}E_{i_2j_2}\ldots E_{j_{r-1}i_{r-1}}E_{i_rj_r}}_{\text{at the level }-1}\otimes I_n\otimes \cdots),$$
$$ U_{i_1j_1}^*U_{i_2j_2}\ldots U_{i_{r-1}j_{r-1}}^*U_{i_rj_r}(\Omega)=\delta_0\otimes (\cdots \otimes I_n\otimes \underbrace{E_{i_1j_1}E_{j_2i_2}\ldots E_{i_{r-1}j_{r-1}}E_{j_ri_r}}_{\text{at the level }0}\otimes I_n\otimes \cdots),$$
which yields the first and the second result.
\end{proof}
For more general words, it is possible to reduce them and fit into the previous case. Fix $1\leq i_1,j_1,\ldots,i_r,j_r\leq n$, $ \epsilon_1,\ldots, \epsilon_r\in \{\emptyset,\ast\}$, and consider the word $u_{i_1j_1}^{\epsilon_1}\ldots u_{i_rj_r}^{\epsilon_r}$. We can decompose $\{1,\ldots,r\}$ into $\bigcup_{k=-r}^rS_k$, where\begin{equation}S_k=\Big\{l\in\{1,\ldots,r\}:k=\sharp\{m>l:\epsilon_m=\emptyset\}-\sharp\{m\geq l:\epsilon_m=\ast\}\Big\}.\end{equation}If we assume that $\emptyset$ corresponds to a North step, $\ast$ to a South step, and consider the path given by $\epsilon_r,\ldots, \epsilon_1$, the set $S_k$ contains the positions where the path goes from the level $k$ to the level $k+1$, or from the level $k+1$ to the level $k$. Consequently, the $S_k$ form a partition of $\{1,\ldots,r\}$, and the $\epsilon_m$ are alternating inside each $S_k$.

\begin{lm}\label{lmrefindeptensorielle}
Let $1\leq i_1,j_1,\ldots,i_r,j_r\leq n$ and $ \epsilon_1,\ldots, \epsilon_r$ be either $\emptyset$ or $\ast$.

If $\sharp\{m:\epsilon_m=*\}\neq\sharp\{m:\epsilon_m=1\}$, then $h(u_{i_1j_1}^{\epsilon_1}\ldots u_{i_rj_r}^{\epsilon_r})=0.$

If $\sharp\{m:\epsilon_m=*\}=\sharp\{m:\epsilon_m=1\}$, then
$$h(u_{i_1j_1}^{\epsilon_1}\ldots u_{i_rj_r}^{\epsilon_r})=\prod_{k=- r}^rh\left(\prod^{\rightarrow}_{l\in S_k}u_{i_lj_l}^{\epsilon_l}\right).$$
\end{lm}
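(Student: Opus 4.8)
The plan is to compute $h(u_{i_1j_1}^{\epsilon_1}\cdots u_{i_rj_r}^{\epsilon_r})=\langle\Omega,U_{i_1j_1}^{\epsilon_1}\cdots U_{i_rj_r}^{\epsilon_r}\Omega\rangle$ directly from the definition of the operators $U_{ij}$, by applying them to $\Omega$ one letter at a time from right to left and tracking two data: the position in $\ell^2(\Z)$, which I will call the \emph{height}, and the matrix occupying each tensor factor of $\bigotimes_{k\in\Z}\mathcal{M}_n(\C)$. First I would record the elementary move: on a vector $\delta_k\otimes\bigotimes_l M_l$ the operator $U_{ij}$ raises the height to $k+1$ and replaces $M_k$ by $E_{ji}M_k$, while $U_{ij}^*$ lowers the height to $k-1$ and replaces $M_{k-1}$ by $E_{ij}M_{k-1}$; in both cases exactly one tensor factor is altered — the one sitting at the lower of the two heights straddled by the step — and the alteration is a left multiplication by the matrix $A_l:=E_{j_li_l}$ if the $l$-th letter is $u_{i_lj_l}$, respectively $A_l:=E_{i_lj_l}$ if it is $u_{i_lj_l}^*$.

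Reading the letters in the order $r,r-1,\dots,1$ starting from $\Omega=\delta_0\otimes\bigotimes_kI_n$, the height performs a lattice path issued from $0$ with a North step for each $\emptyset$ and a South step for each $\ast$. Hence $U_{i_1j_1}^{\epsilon_1}\cdots U_{i_rj_r}^{\epsilon_r}\Omega=\delta_d\otimes\bigotimes_kN_k$ with $d=\sharp\{m:\epsilon_m=\emptyset\}-\sharp\{m:\epsilon_m=\ast\}$; since $\langle\delta_0,\delta_d\rangle=0$ whenever $d\neq0$, this already yields the first assertion.

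Assuming now $d=0$, the key observation — and this is where the tensor structure of $\bigotimes_{k\in\Z}\mathcal{M}_n(\C)$ is used — is that the factor $N_k$ is touched only at the steps crossing the edge between heights $k$ and $k+1$, that is, exactly at the positions $l\in S_k$, and that these touches occur for $l$ decreasing through $S_k$ (right-to-left processing), each a left multiplication, so that $N_k=\prod^{\rightarrow}_{l\in S_k}A_l$. Because the path returns to $0$, every $S_k$ is crossed alternately upwards and downwards, so the $\epsilon_l$ alternate along $S_k$; the same bookkeeping applied to the isolated subword $\prod^{\rightarrow}_{l\in S_k}u_{i_lj_l}^{\epsilon_l}$ (whose own height path merely oscillates between $0$ and $\pm1$ and returns to $0$) gives, by the computation underlying Lemma~\ref{alternating}, that $h\!\left(\prod^{\rightarrow}_{l\in S_k}u_{i_lj_l}^{\epsilon_l}\right)=\tr_n\!\left(\prod^{\rightarrow}_{l\in S_k}A_l\right)=\tr_n(N_k)$. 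Combining the pieces,
\[
h(u_{i_1j_1}^{\epsilon_1}\cdots u_{i_rj_r}^{\epsilon_r})=\Big\langle\Omega,\delta_0\otimes\bigotimes_kN_k\Big\rangle=\prod_k\tr_n(N_k)=\prod_{k=-r}^{r}h\!\left(\prod^{\rightarrow}_{l\in S_k}u_{i_lj_l}^{\epsilon_l}\right),
\]
only finitely many $k$ being relevant since the height stays in $[-r,r]$, and the indices with $S_k=\emptyset$ contributing the harmless factor $\tr_n(I_n)=1=h(1)$.

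I expect no conceptual difficulty here: the whole argument is careful bookkeeping, and the delicate points will be to match the combinatorial set $S_k$ defined in the statement with the set of steps of the height path that cross the edge $\{k,k+1\}$ (a short index chase, already sketched in the surrounding text), and to check that $N_k$ accumulates the matrices $A_l$ in exactly the same order in the full word computation and in the isolated subword computation — this is what makes the right-hand side a genuine product of values of $h$.
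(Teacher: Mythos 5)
Your proof is correct and takes essentially the same route as the paper's: the paper formalizes your right-to-left bookkeeping as a decreasing induction on the position $l$, showing that $U_{i_lj_l}^{\epsilon_l}\cdots U_{i_rj_r}^{\epsilon_r}(\Omega)$ equals $\delta_{k}$ or $\delta_{k+1}$ tensored with $\bigotimes_{p\in\Z}\prod^{\rightarrow}_{q\in S_p\cap\{l,\ldots,r\}}E_{j_qi_q}^{\epsilon_q}$, and then reads off the inner product with $\Omega$ exactly as you do. Your identification of $S_k$ with the steps crossing the edge between heights $k$ and $k+1$, and of $\tr_n(N_k)$ with $h$ of the isolated alternating subword via Lemma~\ref{alternating}, matches the paper's argument point for point.
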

This lemma combined with Lemma~\ref{alternating} describes entirely the state $h$.

\begin{proof}
Let us prove by decreasing induction on $l$ that, for all $1\leq l\leq r$, and $l\in S_k$, we have
\begin{eqnarray*}
U_{i_lj_l}^{\epsilon_l}\ldots U_{i_rj_r}^{\epsilon_r}(\Omega)=&\displaystyle\delta_{k+1}\otimes\bigotimes_{p\in \Z}\left(\prod^\rightarrow_{q\in S_p\cap\{l,\ldots,r\}} E_{j_qi_q}^{\epsilon_q}\right)&\text{ if }\epsilon_l=1,\\
 \text{and }\ U_{i_lj_l}^{\epsilon_l}\ldots U_{i_rj_r}^{\epsilon_r}(\Omega)=&\displaystyle\delta_{k}\otimes\bigotimes_{p\in \Z}\left(\prod^\rightarrow_{q\in S_p\cap\{l,\ldots,r\}} E_{j_qi_q}^{\epsilon_q}\right)&\text{ if }\epsilon_l=*.
\end{eqnarray*}
First of all, we have $U_{i_rj_r}(\Omega)=\delta_{1}\otimes(\ldots\otimes E_{j_ri_r}\otimes\ldots)$ with the non-identity matrix at level $0$ and $U_{i_rj_r}^*(\Omega)=\delta_{-1}\otimes(\ldots\otimes E_{i_rj_r}\otimes\ldots)$ where the non-identity matrix is at level $-1$. Thus the property is true for $l=r$.

Fix now $1\leq l< r$ and assume that the property is true for $l+1$. Suppose first that $\epsilon_l=\epsilon_{l+1}=*$, and denote by $k$ the integer such that $l+1\in S_k$, then $l\in S_{k-1}$ and:
\begin{eqnarray*}
  U_{i_lj_l}^{\epsilon_l}\ldots U_{i_rj_r}^{\epsilon_r}(\Omega)&=&U_{i_lj_l}^{\epsilon_l}\left[\displaystyle\delta_{k}\otimes\bigotimes_{p\in \Z}\left(\prod^\rightarrow_{q\in S_p\cap\{l,\ldots,r\}} E_{j_qi_q}^{\epsilon_q}\right)\right]\\
  &=&\displaystyle\delta_{k-1}\otimes\bigotimes_{p\in \Z}\left(\prod^\rightarrow_{q\in S_p\cap\{l+1,\ldots,r\}} E_{j_qi_q}^{\epsilon_q}\right).
\end{eqnarray*}
The other cases (i.e., $\epsilon_l=\epsilon_{l+1}=\emptyset$, $\epsilon_l=*,\epsilon_{l+1}=\emptyset$ and $\epsilon_{l+1}=*,\epsilon_l=\emptyset$) are treated in the exact same way. Therefore, the property is true for every $l\in\{1,\ldots,r\}$.

Finally, $h(u_{i_1j_1}^{\epsilon_1}\ldots u_{i_rj_r}^{\epsilon_r})=\langle \Omega,U_{i_1j_1}^{\epsilon_1}\ldots U_{i_rj_r}^{\epsilon_r}(\Omega)\rangle$ is exactly as expected.
\end{proof}

We are now ready to prove that $h$ is indeed the Haar trace for the tensor convolution. Thanks to Proposition \ref{sidestate}, it is a consequence of the following proposition. 
\begin{prop}The state $h$ is tracial, and for all other tracial state $\phi$, we have $h\star_T \phi=h$.
\end{prop}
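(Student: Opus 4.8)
The plan is to prove the two statements — traciality of $h$ and the right absorption $h\star_T\phi=h$ for tracial $\phi$ — essentially independently, the second being the substantial one.

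For traciality I would reduce to showing that $h$ is invariant under a cyclic permutation of the letters of a word in the $u_{ij},u_{ij}^{*}$. Words whose numbers of starred and unstarred letters differ are killed on both sides by Lemma~\ref{lmrefindeptensorielle}, and this imbalance is rotation invariant; for balanced words I would use the level-set factorisation of Lemma~\ref{lmrefindeptensorielle} together with the closed formulas of Lemma~\ref{alternating}, observing that its two formulas produce the same product of Kronecker symbols up to a cyclic reindexing, and that a one-step rotation of the word induces a bijection of the families $\{S_k\}$ which preserves $\prod_k h(\cdot)$. (Alternatively one may note that the operators $U_{ij}$ all commute with the translation $T$ that simultaneously shifts $\ell^2(\Z)$ and the tensor legs, realise the $*$-algebra they generate inside a crossed product of $\bigotimes_{k\in\Z}\mathcal{M}_n(\C)$ by the shift $\Z$, and identify $\langle\Omega,\cdot\,\Omega\rangle$ with the restriction of the associated trace.)

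For the absorption, since $\delta$ is itself a tracial state and $(h\otimes\delta)\circ\Delta=h\circ(\id\sqcup\delta)\circ\Delta=h$ by the counit axiom, it suffices to prove that $(h\otimes\phi)\circ\Delta(m)$ does not depend on the tracial state $\phi$, for every word $m=(u^{\epsilon_1})_{i_1j_1}\cdots(u^{\epsilon_r})_{i_rj_r}$. I would expand $\Delta(m)=\sum_{\vec k}m_{\vec k}$ exactly as in Section~\ref{freecase}, and use that the tensor-product state factorises: $(h\otimes\phi)(m_{\vec k})=h\bigl((m_{\vec k})^{(1)}\bigr)\,\phi\bigl((m_{\vec k})^{(2)}\bigr)$, where $(m_{\vec k})^{(1)}$ and $(m_{\vec k})^{(2)}$ are the ordered products of the leg-$1$ and leg-$2$ letters. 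Both have $\epsilon$-pattern $(\epsilon_1,\dots,\epsilon_r)$; the leg-$1$ letter coming from the $l$-th block carries a fixed index ($i_l$ or $j_l$) as its row index and the free index $k_l$ as its column index, while the leg-$2$ letter carries $k_l$ as its row index and the other fixed index as its column index. Applying Lemma~\ref{lmrefindeptensorielle} and Lemma~\ref{alternating} to $h\bigl((m_{\vec k})^{(1)}\bigr)$ produces a product of Kronecker symbols that splits into constraints of two types: some relate two of the fixed indices (if these are violated, $h(m)$ vanishes as well, by the same computation with $\phi$ replaced by $\delta$), and the remaining ones have the form $k_a=k_b$ and define a non-crossing pair partition $P$ of $\{1,\dots,r\}$ whose pairs join blocks carrying opposite $\epsilon$. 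Thus $(h\otimes\phi)\circ\Delta(m)$ equals a constant — a power of $n^{-1}$ times Kronecker symbols in the fixed indices — times $\sum_{\vec k\text{ respecting }P}\phi\bigl((m_{\vec k})^{(2)}\bigr)$, and in $(m_{\vec k})^{(2)}$ the two letters indexed by a pair $\{a,b\}\in P$ share a single summation row index and have opposite $\epsilon$.

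The heart of the argument is then to evaluate this last sum without reference to $\phi$. I would argue by induction on $r$, using only traciality of $\phi$ and the two relations $\sum_k u_{ki}^{*}u_{kj}=\delta_{ij}=\sum_k u_{ik}u_{jk}^{*}$ of $U_{n}^{\nc}$: at each stage the non-crossing structure of $P$ should guarantee a pair $\{a,b\}$ whose two letters are cyclically adjacent in $(m_{\vec k})^{(2)}$ and occur, in cyclic order, as $u^{*}_{k\alpha}\,u_{k\beta}$; traciality then lets one bring this pair into contiguous position and the relation $\sum_k u^{*}_{k\alpha}u_{k\beta}=\delta_{\alpha\beta}$ contracts it, removing two letters and leaving a word of the same shape on $r-2$ blocks. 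After $r/2$ such steps only Kronecker symbols remain, so the sum is $\phi$-independent, whence $(h\otimes\phi)\circ\Delta(m)=(h\otimes\delta)\circ\Delta(m)=h(m)$. I expect the main obstacle to be precisely this last combinatorial point — verifying that a cyclically adjacent pair of the correct orientation $u^{*}_{k\alpha}u_{k\beta}$ is always available (equivalently, that the pair partition produced by $h$ never consists solely of "badly oriented" adjacent pairs) and keeping track of how the level sets $S_k$, the pair partition $P$ and the leg-$2$ word interact. It is exactly here that traciality is used, consistently with the failure of absorption for the boolean and monotone convolutions established in Subsections~\ref{casebool} and~\ref{casemono}.
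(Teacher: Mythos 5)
Your strategy coincides with the paper's: reduce the right absorption to the $\phi$-independence of $(h\otimes\phi)\circ\Delta(m)$ (with $(h\otimes\delta)\circ\Delta=h$ and Proposition~\ref{sidestate} handling the other side), factor the $h$-leg over the level sets of Lemma~\ref{lmrefindeptensorielle}, read off from Lemma~\ref{alternating} a pairing $P$ of the summation indices $k_l$ joining a starred letter to an unstarred one, and contract these pairs in the $\phi$-leg using the unitarity relations of $U_{n}^{\nc}$ together with traciality of $\phi$, by induction on $r$. Your traciality argument (a rotation of the word shifts the level sets, reducing to the alternating case of Lemma~\ref{alternating}) is also the paper's.

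The step you leave open --- that at every stage some pair of $P$ is cyclically adjacent in the $\phi$-leg with the orientation $u^{*}_{k\alpha}u_{k\beta}$ --- is indeed the crux, and your expectation that it holds is correct; but be warned that it is \emph{not} true that every pair of $P$, nor even every adjacent pair, is so oriented, so the order of contraction matters. For $m=u_{i_1j_1}u_{i_2j_2}u^*_{i_3j_3}u^*_{i_4j_4}$ one finds $P=\{\{2,3\},\{1,4\}\}$; the adjacent pair $\{2,3\}$ reads $\sum_k u_{kj_2}u^*_{kj_3}$ in the $\phi$-leg, which is \emph{not} a relation of $U_{n}^{\nc}$ and sits in the interior of the word, so traciality cannot reorient it; one must first contract the wrap-around pair $\{4,1\}$, which traciality turns into $\sum_k u^*_{kj_4}u_{kj_1}=\delta_{j_4j_1}$, and only then does $\{2,3\}$ become the whole word and hence rotatable. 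The correct general statement, which fills your gap, is that \emph{all} the pairs coming from the extremal nonempty level set $S_{k_0}$ are adjacent pairs of the form ($\ast$ at position $q$, $\emptyset$ at position $q+1$), contractible by $\sum_k u^*_{ki}u_{kj}=\delta_{ij}$ with no rearrangement, plus at most one wrap-around pair ($\ast$ at $r$, $\emptyset$ at $1$) requiring one application of traciality; this is exactly what the paper proves about $S_{k_0}$ (note that with the sign convention in the definition of the $S_k$, the extremal level that makes the paper's parenthetical justifications work is the \emph{maximal} nonempty one, e.g. $S_{-1}=\{1,4\}$ rather than $S_{-2}=\{2,3\}$ in the example above, even though the text says ``$\min$''). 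Contracting that whole level set at once, rather than one pair at a time, is also what makes the induction close cleanly, since the word $\prod^{\rightarrow}_{l\notin S_{k_0}}u^{\epsilon_l}_{i_lj_l}$ has precisely the remaining $S_p$ as its level sets and the induction hypothesis applies to it directly. With that combinatorial lemma supplied, your argument is complete and is the paper's proof.
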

\begin{proof}
Firstly, $h$ is tracial. Indeed, let us fix $1\leq i_1,j_1,\ldots,i_q,j_q\leq n$, $ \epsilon_1,\ldots, \epsilon_q\in \{\emptyset,\ast\}$ and compare $h(u_{i_1j_1}^{\epsilon_1}\ldots u_{i_rj_r}^{\epsilon_r})$ with $h(u_{i_rj_r}^{\epsilon_r}u_{i_1j_1}^{\epsilon_1}\ldots u_{i_{r-1}j_{r-1}}^{\epsilon_{r-1}})$. Thanks to Lemma~\ref{alternating}, if the $\epsilon_i$ are alternating, we are done. If not, remark that acting by a cyclic permutation just shifts the $S_k$'s. Thus, up to a cyclic permutation, the decomposition in the $S_k$'s is the same for $u_{i_1j_1}^{\epsilon_1}\ldots u_{i_rj_r}^{\epsilon_r}$ and $u_{i_rj_r}^{\epsilon_r}u_{i_1j_1}^{\epsilon_1}\ldots u_{i_{r-1}j_{r-1}}^{\epsilon_{r-1}}$. Consequently, by Lemma \ref{lmrefindeptensorielle}, the full traciality is a consequence of the traciality for words alternating the $\epsilon_i$'s.

Now, let us prove that $h\star_T \phi=h$. Equivalently, we will prove that, for all $1\leq i_1,j_1,\ldots,i_r,j_r\leq n$ and $ \epsilon_1,\ldots, \epsilon_r\in \{\emptyset,\ast\}$,
\begin{equation}h\star_T\phi(u_{i_1j_1}^{\epsilon_1}\ldots u_{i_rj_r}^{\epsilon_r})=\sum_{k_1,\ldots,k_r=1}^nh(u_{i_1k_1}^{\epsilon_1}\ldots u_{i_rk_r}^{\epsilon_r})\phi(u_{k_1j_1}^{\epsilon_1}\ldots u_{k_rj_r}^{\epsilon_r})\end{equation}
is equal to $h(u_{i_1j_1}^{\epsilon_1}\ldots u_{i_rj_r}^{\epsilon_r}).$
If $\sharp\{m:\epsilon_m=*\}\neq\sharp\{m:\epsilon_m=1\}$, this is a direct consequence of Lemma~\ref{lmrefindeptensorielle}. If not, let us prove the result by induction on the even length $r=2q$ of the word. 

Remark that $h\star_T \phi(1)=h(1)=1$. Fix $q> 0$ and  suppose that the result is true for words of length less than $2q$. Let us prove that the result is true for words  $u_{i_1j_1}^{\epsilon_1}\ldots u_{i_{r}j_{r}}^{\epsilon_{r}}$ of length $r=2q$ such that $\sharp\{m:\epsilon_m=*\}=\sharp\{m:\epsilon_m=1\}$.

Fix $1\leq i_1,j_1,\ldots,i_r,j_r\leq n$ and $ \epsilon_1,\ldots, \epsilon_r\in \{\emptyset,\ast\}$. Consider $k_0=\min \{k:S_k\neq \emptyset\}$. For all $q\in S_{k_0}\setminus \{1\}$ such that $\epsilon_q=\emptyset$, we must have $\epsilon_{q-1}=\ast$ and consequently $q-1\in S_{k_0}$ (indeed, if $\epsilon_{q-1}=\emptyset$, then $q-1\in S_{k_0+1}$ and $k_0$ is not minimal). Moreover, if $1\in S_{k_0}$ and $\epsilon_1=\emptyset$, we must have $\epsilon_{r}=\ast$ and consequently $r\in S_{k_0}$ (indeed, in this case, $\epsilon_1=\emptyset$ implies that $1\in S_{-1}$ and $k_0=-1$, and if $\epsilon_{r}=\emptyset$, then $r\in S_{0}$ and $-1$ is not minimal). To sum up, $S_{k_0}$ can be written in the form $$\{r(1)\leq r(1)+1\leq r(2)\leq r(2)+1\leq\cdots \leq r(q)\leq r(q)+1\}$$ if the first element is labelled by $\ast$, and in the form $$\{1\leq r(1)\leq r(1)+1\leq\cdots \leq r(q)\leq r(q)+1,r\}$$ if the first element is labelled by $\emptyset$.

In the case where the first element is labelled by $\ast$, let us decompose $S_{k_0}=\{r(1)\leq r(1)+1\leq \cdots \leq r(q)\leq r(q)+1\}$. Set $r(q+1)=r(1)$, and compute, thanks to Lemmas~\ref{alternating} and~\ref{lmrefindeptensorielle},
\begin{align*}&h\star_T \phi(u_{i_1j_1}^{\epsilon_1}\ldots u_{i_{r}j_{r}}^{\epsilon_{r}})=\sum_{k_1,\ldots,k_{r}=1}^nh(u_{i_1k_1}^{\epsilon_1}\ldots u_{i_{r}k_{r}}^{\epsilon_{r}} )\phi(u_{k_1j_1}^{\epsilon_1}\ldots u_{k_{r}j_{r}}^{\epsilon_{r}})\\
&=\sum_{k_1,\ldots,k_{r}=1}^nh\left(\prod^{\rightarrow}_{l\notin S_{k_0}}u_{i_lk_l}^{\epsilon_l}\right)\frac{1}{n}\prod_{l=1}^q(\delta_{k_{r(l)}k_{r(l)+1}}\delta_{i_{r(l)+1}i_{r(l+1)}})\cdot \phi(u_{k_1j_1}^{\epsilon_1}\ldots u_{k_{r}j_{r}}^{\epsilon_{r}}).
\end{align*}
Summing over the indices $k_{r(1)}=k_{r(1)+1},k_{r(2)}=k_{r(2)+1},\cdots, k_{r(q)}=k_{r(q)+1}$ in $\phi$ and use the induction hypothesis yields to
\begin{align*}
&\sum_{\substack{l\notin S_{k_0}\\1\leq k_l\leq n}} h\left(\prod^{\rightarrow}_{l\notin S_{k_0}}u_{i_lk_l}^{\epsilon_l}\right)\phi\left(\prod^{\rightarrow}_{l\notin S_{k_0}}u_{k_lj_l}^{\epsilon_l}\right)\frac{1}{n}\prod_{l=1}^q(\delta_{j_{r(l)}j_{r(l)+1}}\delta_{i_{r(l)+1}i_{r(l+1)}})\\
&=h\star_T \phi\left(\prod^{\rightarrow}_{l\notin S_{k_0}}u_{i_lj_l}^{\epsilon_l}\right)\frac{1}{n}\prod_{l=1}^q(\delta_{j_{r(l)}j_{r(l)+1}}\delta_{i_{r(l)+1}i_{r(l+1)}})\\
&=h\left(\prod^{\rightarrow}_{l\notin S_{k_0}}u_{i_lj_l}^{\epsilon_l}\right)\frac{1}{n}\prod_{l=1}^q(\delta_{j_{r(l)}j_{r(l)+1}}\delta_{i_{r(l)+1}i_{r(l+1)}})\\
&=h(u_{i_1j_1}^{\epsilon_1}\ldots u_{i_{r}j_{r}}^{\epsilon_{r}}).
\end{align*}
In the case where the first element is labelled by $\emptyset$, we decompose $S_{k_0}=\{1\leq r(1)\leq r(1)+1\leq\cdots \leq r(q)\leq r(q)+1\leq r\}$. The previous computation can be written as well, with a needed shift which has to be done in order to sum over the index $k_{1}=k_{r}$:
\begin{multline*}\sum_{k_1=1}^n \phi(u_{k_1j_1}u_{k_2j_2}^{\epsilon_2}\ldots u_{k_{r-1}j_{r-1}}^{\epsilon_{r-1}} u_{k_{r}j_{r}}^\ast)\\=\sum_{k_1=1}^n \phi(u_{k_{r}j_{r}}^\ast u_{k_1j_1}u_{k_2j_2}^{\epsilon_2}\ldots u_{k_{r-1}j_{r-1}}^{\epsilon_{r-1}})=\delta_{j_1,j_r}\phi(u_{k_2j_2}^{\epsilon_2}\ldots u_{k_{r-1}j_{r-1}}^{\epsilon_{r-1}}).\end{multline*}
Finally, we always have $h \star_T \phi(u_{i_1j_1}^{\epsilon_1}\ldots u_{i_{r}j_{r}}^{\epsilon_{r}})=h(u_{i_1j_1}^{\epsilon_1}\ldots u_{i_{r}j_{r}}^{\epsilon_{r}})$ and the proof is done.
\end{proof}
\section{Random matrix models}\label{matrixmodel}
In this section, we define a model of random matrices which converges to the free Haar trace defined in Section~\ref{Haarstate}.

Let us fix an arbitrary set $I$ of indices. Let $(M_i)_{i\in I}$ be a family of random variables in some non-commutative space $(\mathcal{A},\phi)$. For each $N\in \mathbb{N}$, let $(M_i^{(N)})_{i\in I}$ be a family of random $N\times N$ matrices. We will say that $(M_i^{(N)})_{i\in I}$ \emph{converges almost surely in $*$-distribution} to $(M_i)_{i\in I}$ as $N$ tends to $\infty$ if for all noncommutative polynomial $P\in \mathbb{C}\langle X_i,X_i^*:i\in I\rangle$ we have almost surely the following convergence:
$$\lim_{N\to \infty}\tr_N \left(P(M_i^{(N)})\right)=  \phi \left(P(M_i)\right),$$
where we recall that $\tr_N$ is the normalized trace.

The following theorem, whose first version is due to Voiculescu~\cite{Voiculescu1991}, is a well-known phenomenon which makes freeness appear from independence and invariance by unitary conjugation. (see also~\cite{Collins2003,LEVY2011,Nica2006,Voiculescu1992}).

\begin{thm}[Theorem 23.14 of~\cite{Nica2006}]\label{Voi}Let $I$ and $J$ be two arbitrary set of indices. Let $(A_i)_{i\in I}$ be a family of random variables in $(\mathcal{A},\phi)$ and $(B_j)_{i\in J}$ be a family of random variables in $(\mathcal{B},\tau)$. We suppose that
\begin{enumerate}
\item For each $N\in \mathbb{N}$, $\{A_i^{(N)}\}_{i\in I}$ is a family of random $N\times N$ matrices which converges almost surely in $*$-distribution to $\{A_i\}_{i\in I}$ as $N$ tends to $\infty$.
\item For each $N\in \mathbb{N}$, $\{B_j^{(N)}\}_{i\in J}$ is a family of constant $N\times N$ matrices which converges almost surely in $*$-distribution to $\{B_j\}_{j\in J}$ as $N$ tends to $\infty$.
\item The law of $\{A_i^{(N)}\}_{i\in I}$ is invariant by unitary conjugation, i.e. it is equal to the law of $\{UM_i^{(N)}U^*\}_{i\in I}$ for all $U\in \mathcal{M}_N(\mathbb{C})$ which is unitary.
\end{enumerate}
 Then the matrices $\{A_i^{(N)}\}_{i\in I}\cup\{B_j^{(N)}\}_{i\in  J}$ converge almost surely in $*$-distribution together to $\{A_i\}_{i\in I}\cup\{B_j\}_{i\in  J}$ seen as elements of $(\mathcal{A}\sqcup \mathcal{B},\phi\ast \tau)$ as $N$ tends to $\infty$
\end{thm}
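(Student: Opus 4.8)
The plan is to reduce the statement to the convergence of mixed $*$-moments and then exploit the unitary invariance hypothesis~(3) to insert a Haar-distributed random unitary, thereby reducing everything to the classical fact that a Haar unitary becomes asymptotically $*$-free from deterministic matrices. First I would enlarge the probability space so that, for each $N$, a Haar-distributed unitary $V_N\in\mathcal{M}_N(\C)$ is defined and independent of the family $\{A_i^{(N)}\}_{i\in I}$; since the $B_j^{(N)}$ are constant, independence is automatic there. By hypothesis~(3), for each fixed $N$ the family $\{V_NA_i^{(N)}V_N^*\}_{i\in I}\cup\{B_j^{(N)}\}_{j\in J}$ has the same joint law as $\{A_i^{(N)}\}_{i\in I}\cup\{B_j^{(N)}\}_{j\in J}$. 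Hence it suffices to show that $\{V_NA_i^{(N)}V_N^*\}_i\cup\{B_j^{(N)}\}_j$ converges almost surely in $*$-distribution to $\{A_i\}_i\cup\{B_j\}_j$ viewed inside $(\mathcal{A}\sqcup\mathcal{B},\phi\ast\tau)$. Writing out the defining relations of the free product state, this amounts to proving that for every alternating word $W_N=C_1^{(N)}D_1^{(N)}C_2^{(N)}D_2^{(N)}\cdots C_k^{(N)}D_k^{(N)}$, where each $C_\ell^{(N)}=V_NP_\ell(A^{(N)})V_N^*$ for a fixed noncommutative $*$-polynomial $P_\ell$ with $\phi(p_\ell)=0$ in the limit, and each $D_\ell^{(N)}=Q_\ell(B^{(N)})$ with $\tau(q_\ell)=0$ in the limit, one has $\tr_N(W_N)\to 0$ almost surely.

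The heart of the argument is the computation of the expectation of $\tr_N(W_N)$ over the Haar measure on $U(N)$, conditionally on $\{A_i^{(N)}\}$. Expanding in matrix coordinates and integrating the resulting monomials in the entries of $V_N$ and their complex conjugates against the Weingarten function $\mathrm{Wg}_N$ of the unitary group, the leading contribution as $N\to\infty$ comes exactly from matching each occurrence of $V_N$ with the occurrence of $V_N^*$ sitting in the same telescoped block $V_NP_\ell V_N^*$; every other permutation in the Weingarten sum is suppressed by a negative power of $N$. Carrying this out and using $\tr_N(P_\ell(A^{(N)}))\to\phi(p_\ell)=0$ and $\tr_N(Q_\ell(B^{(N)}))\to\tau(q_\ell)=0$, one obtains $\E[\tr_N(W_N)\mid A^{(N)}]\to 0$ almost surely, which is precisely Voiculescu's asymptotic freeness of a Haar unitary from deterministic matrices (the paper attributes the first version to~\cite{Voiculescu1991}); alternatively one may invoke that deterministic-matrix statement as a black box and feed in a realization $\{A_i^{(N)}(\omega)\}$ along a full-measure set.

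To upgrade from convergence in conditional expectation to almost sure convergence I would bound the conditional variance $\mathrm{Var}\big(\tr_N(W_N)\mid A^{(N)}\big)=O(N^{-2})$, either again from Weingarten estimates or, more cleanly, from concentration of measure on $U(N)$ applied to the Lipschitz function $V\mapsto\tr_N(W_N)$ (via the logarithmic Sobolev / Gromov--Milman inequality). A Borel--Cantelli argument over a countable family of $*$-polynomials with rational coefficients then yields, for a fixed realization of the $A^{(N)}$, almost sure convergence of all relevant traces, and a Fubini argument combines this with the almost sure convergence hypotheses~(1) and~(2) for the $A_i^{(N)}$ and $B_j^{(N)}$ separately; the reduction to countably many test polynomials is what makes this bookkeeping legitimate. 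The main obstacle is exactly this disentangling of the two independent sources of randomness: hypothesis~(1) supplies only almost sure convergence of the $A_i^{(N)}$ with no rate, so the Weingarten (or concentration) estimate must be made uniform over the realizations of $A^{(N)}$ lying in the good set before the Borel--Cantelli step can be run. Once that uniformity is secured, the combinatorial identification of the surviving moments with the free product state $\phi\ast\tau$ is the clean, purely non-crossing part of the proof.
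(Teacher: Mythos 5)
The paper does not prove this statement at all: it is imported verbatim as Theorem~23.14 of~\cite{Nica2006} (Voiculescu's asymptotic freeness, in the unitarily invariant form), so there is no internal proof to compare against. Judged on its own, your sketch is the standard textbook argument and is essentially sound: the reduction via hypothesis~(3) to $\{V_NA_i^{(N)}V_N^*\}\cup\{B_j^{(N)}\}$ with $V_N$ an independent Haar unitary is legitimate (invariance under every fixed unitary gives invariance under an independent Haar unitary by conditioning), the Weingarten computation of $\E[\tr_N(W_N)\mid A^{(N)}]$ identifies the surviving pairings correctly, and the variance bound $O(N^{-2})$ plus Borel--Cantelli over a countable family of $*$-polynomials with rational coefficients, combined with a Fubini argument over the product of the two sources of randomness, is exactly how one upgrades to almost sure convergence.

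Two points deserve care. First, your "alternating centered word" reduction should center at finite $N$ (replace $P_\ell(A^{(N)})$ by $P_\ell(A^{(N)})-\tr_N(P_\ell(A^{(N)}))I_N$ and induct on word length, using $\tr_N(P_\ell(A^{(N)}))\to\phi(p_\ell)=0$ to kill the correction terms); centering only "in the limit" as written leaves residual terms you must account for. Second, of your two proposed routes to the variance bound, the concentration-of-measure one is the weaker option here: the Lipschitz constant of $V\mapsto\tr_N(W_N)$ involves the operator norms of the $A_i^{(N)}$ and $B_j^{(N)}$, and almost sure convergence of $*$-moments gives no uniform operator-norm control. The Weingarten route does not have this problem, since both $\E|\tr_N(W_N)|^2$ and $|\E\tr_N(W_N)|^2$ are expressed as combinations of normalized traces of words in the $A^{(N)}$'s and $B^{(N)}$'s with coefficients that are $O(N^{-2})$ off the leading pairings, and those normalized traces are bounded in $N$ precisely because they converge. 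So the uniformity you worry about at the end is automatic once you commit to the Weingarten estimate on the full-measure set where hypothesis~(1) holds; with that choice made, the proof closes.
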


For a matrix $M\in \mathcal{M}_{nN}(\mathbb{C})$ and $1\leq i,j\leq n$, we denote by $[M]_{ij}$ the $(i,j)$-block of $M$ when it is divided in $n^2$ matrices of size $N\times N$.

\begin{cor}Let $I$ be an arbitrary set of indices. Let $(A_k)_{k\in K}$ be a family of random variables in $(\mathcal{A},\phi)$. For each $N\in \mathbb{N}$, let $\{A_k^{(N)}\}_{k\in K}$ be a family of random $N\times N$ matrices which converges almost surely in $*$-distribution to $\{A_k\}_{k\in K}$ as $N$ tends to $\infty$ and whose law is invariant by unitary conjugation.

Then, the family of block matrices $\{[A_k^{(nN)}]_{ij}\}_{k\in K,1\leq i,j\leq n}$ converges almost surely in $*$-distribution to $\{E_{1i}A_kE_{j1}\}_{k\in K,1\leq i,j\leq n}$ seen as an element of $\left(E_{11}(\mathcal{A}\sqcup \mathcal{M}_n(\mathbb{C}))E_{11},n\ \phi\ast \tr_n\right)$ when $N$ tends to $\infty$.\label{block}
\end{cor}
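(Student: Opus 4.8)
The plan is to deduce the statement from the asymptotic freeness result Theorem~\ref{Voi}, via the standard identification $\mathcal{M}_{nN}(\mathbb{C})\simeq \mathcal{M}_N(\mathbb{C})\otimes \mathcal{M}_n(\mathbb{C})$. Under this identification one writes $M=\sum_{1\le i,j\le n}[M]_{ij}\otimes E_{ij}$, the matrix units of the second factor are the constant matrices $I_N\otimes E_{ij}\in\mathcal{M}_{nN}(\mathbb{C})$, and one has the elementary identity
\[
(I_N\otimes E_{1i})\,M\,(I_N\otimes E_{j1})=[M]_{ij}\otimes E_{11}\qquad(1\le i,j\le n).
\]
This exhibits $[A_k^{(nN)}]_{ij}$ as the image, under the obvious $*$-isomorphism of the corner $(I_N\otimes E_{11})\mathcal{M}_{nN}(\mathbb{C})(I_N\otimes E_{11})$ with $\mathcal{M}_N(\mathbb{C})$, of the compression $(I_N\otimes E_{1i})A_k^{(nN)}(I_N\otimes E_{j1})$, in exact analogy with the construction of $E_{1i}A_kE_{j1}\in E_{11}(\mathcal{A}\sqcup\mathcal{M}_n(\mathbb{C}))E_{11}$ in Proposition-Definition~\ref{haarstate}.

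First I would verify the hypotheses of Theorem~\ref{Voi} for the two families of $nN\times nN$ matrices (indexed by $N$) given by $\{A_k^{(nN)}\}_{k\in K}$ and $\{I_N\otimes E_{ij}\}_{1\le i,j\le n}$. The first is random, converges almost surely in $*$-distribution to $\{A_k\}_{k\in K}$ (the sequence $N\mapsto A_k^{(nN)}$ being a subsequence of $N'\mapsto A_k^{(N')}$), and its law is invariant by conjugation by unitaries of $\mathcal{M}_{nN}(\mathbb{C})$ by assumption. The second is constant, and since $\tr_{nN}$ of any word in the $I_N\otimes E_{ij}$ and their adjoints $I_N\otimes E_{ji}$ equals $\tr_n$ of the corresponding word in $\{E_{ij}\}$ (independently of $N$), it converges in $*$-distribution to $\{E_{ij}\}_{1\le i,j\le n}$ in $(\mathcal{M}_n(\mathbb{C}),\tr_n)$. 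Theorem~\ref{Voi} then yields that $\{A_k^{(nN)}\}_{k\in K}\cup\{I_N\otimes E_{ij}\}_{i,j}$ converges almost surely in $*$-distribution to $\{A_k\}_{k\in K}\cup\{E_{ij}\}_{i,j}$ seen in $(\mathcal{A}\sqcup\mathcal{M}_n(\mathbb{C}),\phi\ast\tr_n)$.

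To conclude I would test convergence on words (mixed $*$-moments). Using the displayed identity, for any word $w$ of length $r\ge 1$ one gets
\[
w\big(\{(I_N\otimes E_{1i})A_k^{(nN)}(I_N\otimes E_{j1})\}\big)=w\big(\{[A_k^{(nN)}]_{ij}\}\big)\otimes E_{11},
\]
and since $\tr_{nN}(Z\otimes E_{11})=\tfrac1n\tr_N(Z)$ this gives $\tr_N\big(w(\{[A_k^{(nN)}]_{ij}\})\big)=n\,\tr_{nN}\big(w(\{(I_N\otimes E_{1i})A_k^{(nN)}(I_N\otimes E_{j1})\})\big)$. Expanding each factor, the right-hand word is a genuine word in the joint family $\{A_k^{(nN)}\}\cup\{I_N\otimes E_{ij}\}$, so by the previous paragraph its $\tr_{nN}$ converges almost surely to $(\phi\ast\tr_n)\big(w(\{E_{1i}A_kE_{j1}\})\big)$. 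Multiplying by $n$, and observing that $n(\phi\ast\tr_n)$ restricted to $E_{11}(\mathcal{A}\sqcup\mathcal{M}_n(\mathbb{C}))E_{11}$ is precisely the state of the target space while the $E_{1i}A_kE_{j1}$ are precisely its distinguished variables, we obtain the claimed almost sure convergence in $*$-distribution for every word, hence for every noncommutative polynomial.

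I do not anticipate a serious difficulty: this is Theorem~\ref{Voi} packaged with the well-known description of the blocks of a matrix as a corner of $\mathcal{M}_{nN}(\mathbb{C})\simeq \mathcal{M}_N(\mathbb{C})\otimes\mathcal{M}_n(\mathbb{C})$. The one point deserving care is the bookkeeping of the normalization: the compressed variables live in the corner whose unit is $I_N\otimes E_{11}$ (not $I_{nN}$), and the relevant state on it is $n\,\tr_{nN}$ rather than $\tr_{nN}$; testing only on words of length $\ge 1$ (the empty word being handled separately and trivially) sidesteps any ambiguity about the unit, and pinning down the factor $n$ is exactly what makes the limit land in $(E_{11}(\mathcal{A}\sqcup\mathcal{M}_n(\mathbb{C}))E_{11},\,n\,\phi\ast\tr_n)$ as in Proposition-Definition~\ref{haarstate}.
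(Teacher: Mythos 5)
Your proof is correct and follows essentially the same route as the paper: both apply Theorem~\ref{Voi} to the random family together with the constant block matrices $I_N\otimes E_{ij}$ (the paper's $P_{ij}^{(N)}$), identify the compressions $(I_N\otimes E_{1i})A_k^{(nN)}(I_N\otimes E_{j1})$ with the blocks $[A_k^{(nN)}]_{ij}$ sitting in a corner, and track the normalization factor $n$ relating $\tr_N$ to $\tr_{nN}$ so that the limit lands in $\left(E_{11}(\mathcal{A}\sqcup\mathcal{M}_n(\mathbb{C}))E_{11},\,n\,\phi\ast\tr_n\right)$. The only cosmetic difference is that you test directly on words via the tensor-product picture, while the paper phrases the same bookkeeping through the embedding $M\mapsto\left(\begin{smallmatrix}M&0\\0&0\end{smallmatrix}\right)$ of $\mathcal{M}_N(\mathbb{C})$ into $\mathcal{M}_{nN}(\mathbb{C})$.
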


Let us remark that the invariance of the law by unitary conjugation is not very restrictive. Indeed, if the law of $\{A_k^{(N)}\}_{k\in K}$ is not invariant by unitary conjugation, we can replace the family $\{A_k^{(N)}\}_{k\in K}$ by the family $\{UA_k^{(N)}U^*\}_{k\in K}$, where $U$ is a uniform unitary random matrix of $\mathcal{M}_N(\mathbb{C})$ independent from $\{A_k^{(N)}\}_{k\in K}$.

\begin{proof}First, remark that the family of constant $nN\times nN$ matrices $\{P_{ij}^{(N)}\}_{1\leq i,j\leq n}$ defined by the block matrices $[P_{ij}^{(N)}]_{lm}=\delta_{il}\delta_{jm}I_{N}$ (the block of $P_{ij}^{(N)}$ are zero except the $(i,j)$-th block which is $I_N$) converges to $\{E_{ij}\}_{1\leq i,j\leq n}\subset \mathcal{M}_n(\mathbb{C})$ as $N$ tends to $\infty$. Using Theorem~\ref{Voi}, the family $\{P_{1i}^{(N)}A_k^{(nN)}P_{j1}^{(N)}\}_{k\in K,1\leq i,j\leq n}$ converges to $\{E_{1i}A_kE_{j1}\}_{k\in K,1\leq i,j\leq n}$ seen as an element of $\left(\mathcal{A}\sqcup \mathcal{M}_n(\mathbb{C}),\phi\ast \tr_n\right)$ when $N$ tends to $\infty$.

But let us remark that $P_{1i}^{(N)}A_k^{(nN)}P_{j1}^{(N)}=\left(\begin{array}{cc}[A_k^{(nN)}]_{ij} & 0 \\0 & 0\end{array}\right)$. Consequently, the morphism of algebra from $\mathcal{M}_N(\mathbb{C})$ to $\mathcal{M}_{nN}(\mathbb{C})$ given by $M\mapsto \left(\begin{array}{cc}M & 0 \\0 & 0\end{array}\right)$ and the previous convergence implies the convergence of $\{[A_k^{(nN)}]_{ij}\}_{k\in K,1\leq i,j\leq n}$ as $N$ tends to $\infty$. However, one has to be careful that the trace $\tr_N$ is transformed via this map into the linear functional $n\tr_{nN}$, and that consequently the family $\{[A_k^{(nN)}]_{ij}\}_{k\in K,1\leq i,j\leq n}$ converges to $\{E_{1i}A_kE_{j1}\}_{k\in K,1\leq i,j\leq n}$ seen as elements of $\mathcal{A}\sqcup \mathcal{M}_n(\mathbb{C})$ endowed with the linear functional $n(\phi\ast \tr_n)$, or equivalently, seen as elements of the noncommutative space $\left(E_{11}(\mathcal{A}\sqcup \mathcal{M}_n(\mathbb{C}))E_{11},n(\phi\ast \tr_n)\right)$.
\end{proof}

A \emph{Haar unitary matrix} on the unitary group $U(N)=\{M\in \mathcal{M}_N(\mathbb{C}):U^*U=I_N\}$ is a uniformly distributed unitary matrix $U^{(N)}$, or equivalently a random unitary matrix $U^{(N)}$ which is equal in law to $VU^{(N)}$ and $U^{(N)}V$ for every unitary matrix $V$.
\begin{thm}Let us consider $(u_{ij})_{1\leq i,j\leq n}$, the generators of the non-commutative space $U\langle n \rangle$ endowed with its free Haar trace. For all $N\geq 1$, let $U^{(N)}$ be a Haar unitary matrix on the classical unitary group $U(N)$.

Then, \label{limitrandomdeux}the matrices $\left([U^{(nN)}]_{ij}\right)_{1\leq i,j\leq n}$ converge almost surely in $*$-distribution to $(u_{ij})_{1\leq i,j\leq n}$ when $N$ tends to $\infty$.
\end{thm}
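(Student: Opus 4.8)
The plan is to deduce the statement almost immediately from the block-compression result of Corollary~\ref{block} combined with the concrete description of the free Haar trace obtained in Section~\ref{freecase}. First I would fix a Haar unitary element $U$ in a noncommutative probability space $(\mathcal{A},\phi)$; concretely one may take $\mathcal{A}=C(\mathbb{U})$ with $\phi$ the normalised Haar measure and $U=\Id_{\mathbb{U}}$, exactly as in the proof of Proposition~\ref{McC}. The first ingredient is the classical fact that a Haar unitary matrix $U^{(N)}$ on $U(N)$ converges almost surely in $*$-distribution to $U$. This is standard (see~\cite{Nica2006}): the mixed $*$-moments of $U^{(N)}$ reduce to the traces $\tr_N((U^{(N)})^k)$ with $k\in\mathbb{Z}$, and since $\E[|\tr_N((U^{(N)})^k)|^2]$ decays fast enough in $N$ for each fixed $k\neq 0$, a Borel--Cantelli argument yields $\tr_N((U^{(N)})^k)\to\delta_{k0}=\phi(U^k)$ almost surely.

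Next I would note that the law of $U^{(nN)}$ is invariant under unitary conjugation, since $VU^{(nN)}V^*$ has the same law as $U^{(nN)}$ for every $V\in U(nN)$. Thus Corollary~\ref{block}, applied with a one-element index set and with the single random variable equal to $U$, applies verbatim: the family of blocks $\{[U^{(nN)}]_{ij}\}_{1\leq i,j\leq n}$ converges almost surely in $*$-distribution to $\{E_{1i}UE_{j1}\}_{1\leq i,j\leq n}=\{j_U(u_{ij})\}_{1\leq i,j\leq n}$, viewed inside the noncommutative space $(E_{11}(\mathcal{A}\sqcup\mathcal{M}_n(\mathbb{C}))E_{11},\,n(\phi\ast\tr_n))$.

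It then remains to identify this limiting $*$-distribution with the one coming from the free Haar trace. Because $j_U$ is a $*$-homomorphism, for any noncommutative $*$-polynomial $P$ one has $(n(\phi\ast\tr_n))(P(j_U(u_{ij}),j_U(u_{ij})^*))=(n(\phi\ast\tr_n))(j_U(P(u_{ij},u_{ij}^*)))=([n(\phi\ast\tr_n)]\circ j_U)(P(u_{ij},u_{ij}^*))$, so the family $\{j_U(u_{ij})\}_{1\leq i,j\leq n}$ has exactly the same $*$-distribution as the generators $\{u_{ij}\}_{1\leq i,j\leq n}$ of $U_{n}^{\nc}$ equipped with the state $h=[n(\phi\ast\tr_n)]\circ j_U$. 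By the proof of Theorem~\ref{Haartracetheorem} in the free case --- equivalently by Proposition~\ref{McC} --- this $h$ is precisely the free Haar trace on $U\langle n\rangle$. Chaining the two convergences then gives the claim.

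There is no serious obstacle here; the whole argument is a matter of assembling results already in place. The two points that deserve a line of care are the passage from convergence in expectation to almost sure convergence in the first step (the Borel--Cantelli estimate for $\tr_N((U^{(N)})^k)$) and the bookkeeping between the renormalised functional $n(\phi\ast\tr_n)$ on the corner $E_{11}(\mathcal{A}\sqcup\mathcal{M}_n(\mathbb{C}))E_{11}$ and the normalised trace $\tr_{nN}$ on $\mathcal{M}_{nN}(\mathbb{C})$ --- but the latter is exactly what is handled inside the proof of Corollary~\ref{block}, so nothing new is needed. An alternative route, bypassing Corollary~\ref{block}, would be to compute the asymptotics of $\E[\tr_{nN}(P([U^{(nN)}]_{ij},[U^{(nN)}]_{ij}^*))]$ and its variance directly via Weingarten calculus and to check that the limiting $*$-cumulants are those listed in Proposition~\ref{cyclicuu}; but the route via Corollary~\ref{block} is much shorter.
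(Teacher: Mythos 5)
Your proposal is correct and follows essentially the same route as the paper: the paper's proof is exactly an application of Corollary~\ref{block} to the family $\{U^{(N)},(U^{(N)})^{\ast}\}$, using the almost sure convergence of a Haar unitary matrix to a Haar unitary element $U$ and the identification of the free Haar trace with $[n(\phi\ast \tr_n)]\circ j_U$ from Section~\ref{freecase}. Your version merely spells out the invariance-by-conjugation hypothesis and the identification of the limiting $*$-distribution in more detail.
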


\begin{proof}
Setting $\{U^{(N)}_k\}_{k\in K}=\{U^{(N)},{U^{(N)}}^{\ast}\}$ (with $K=\{1,2\}$), it is a direct consequence of Corollary~\ref{block}. Indeed, $U^{(N)}$ converge almost surely to a Haar unitary random variable $U$, and the Haar trace is given by $[n(\phi\ast \tr_n)]\circ j_U)$.
\end{proof}

\section{Free Lévy processes on the unitary dual group}\label{secfree}

In this section, we study free Lévy processes on the unitary dual  group. We recall their definition and the correspondence between Lévy processes, generators, and Schürmann triples. We describe a class of free Lévy processes which appears as limit of Lévy processes on the classical unitary group, and compute their generators thanks to a representation theorem which was still missing in the free case.

\subsection{Free Lévy processes}
\begin{defi}\label{definitionLevy}
A \emph{free unitary Lévy process} is a family $(U_t)_{t\geq0}$ of unitary element of a noncommutative probability space $(\mathcal{A},\phi)$ such that:
\begin{itemize}
\item $U_0=1_\mathcal{A}$.
\item For all $0\leq s\leq t$, the distribution of $U_s^{-1}U_t$ depends only on $t-s$.
\item For all $0\leq t_1\leq\ldots\leq t_k$, the random variables $U_{t_1},U_{t_1}^{-1}U_{t_2},\ldots,U_{t_{n-1}}^{-1}U_{t_n}$ are free. 
\item The distribution of $U_t$ converges weakly to $\delta_1$ as $t$ goes to $0$.
\end{itemize}
\end{defi}

One can generalize this definition by considering a process $(U_t)_{t\geq0}$ of matrices of elements of $\mathcal{A}$ which are unitary, instead of considering only one element. In other words, we want to consider a process $(j_t)_{t\geq0}$ of quantum random variables on $U\langle n\rangle$ over $(\mathcal{A},\phi)$ (for all time $t\geq 0$, $j_t:U_{n}^{\nc}\to \mathcal{A}$ is a $*$-homomorphism, which is equivalent with requiring that the matrix $(j_t(u_{ij}))_{i,j=1}^n$ is unitary).

\begin{defi}
A \emph{free Lévy process} on $U\langle n\rangle$ over $(\mathcal{A},\phi)$ is a family of quantum random variables $(j_t)_{t \geq 0}$ on $U\langle n\rangle$ over $\mathcal{A}$ such that:
\begin{itemize}
\item $j_0=\delta 1_{\mathcal{A}}$.
\item For all $0\leq s\leq t$, $\phi\circ ((j_s\circ \Sigma)\star j_t)=\phi\circ j_{t-s}$ (stationary of the distributions).
\item For all $0\leq t_1\leq \ldots\leq t_k$, the homomorphisms $ j_{t_1},(j_{t_1}\circ \Sigma)\star j_{t_{2}},\ldots,(j_{t_n}\circ \Sigma)\star j_{t_{n-1}}$ are freely independent in the sense that the image $*$-algebras of $U_{n}^{\nc}$ are freely independent in $(\mathcal{A},\phi)$.
\item For all $b\in U_{n}^{\nc}$, $\phi\circ j_{s}(b)$ converges towards $\delta(b)$ when $s$ tends to $0$.
\end{itemize}
\end{defi}

Some authors find more convenient to make the following assumptions on the family of increments $(j_{st})_{0\leq s \leq t}$ linked with $(j_t)_{t\geq0}$ by the relation $j_{st}=(j_s\circ \Sigma)\star j_t$ (for all $0\leq s\leq t$):
\begin{itemize}
\item For all $0\leq t$, $j_{tt}=\delta 1_{\mathcal{A}}$.
\item For all $0\leq r\leq s\leq t$, $j_{rs}\star j_{st}=j_{rt}$.
\item For all $0\leq s\leq t$, $\phi\circ j_{st}=\phi\circ j_{0,t-s}$.
\item For all $0\leq t_1\leq \ldots\leq t_k$, the homomorphisms $j_{0t_1},\ldots,j_{t_{n-1}t_n}$ are freely independent in the sense that the image algebras are freely independent.
\item For all $b\in U_{n}^{\nc}$, $\phi\circ j_{0s}(b)$ converges towards $\delta(b)$ when $s$ tends to $0$.
\end{itemize}
Of course, the two points of view are equivalent. Let us observe that a free unitary Lévy process $(U_t)_{t\geq0}$ is a free Lévy process $(u\mapsto U_t)_{t\geq 0}$ on $U\langle 1\rangle$.

\subsection{Free Lévy processes as limit of random matrices}

%
Let us present here an example of of free Lévy process constructed thanks to the homomorphism $j_U$ described in Section~\ref{buildstate}, and which is the limit of random matrices in the sense of Theorem~\ref{limitrandomun}.
\begin{prop}
Let $(U_t)_{t\geq0}$ be a free unitary Lévy process.\label{levyprocess} Let us consider the family of quantum random variables $(j_t)_{t \geq 0}$ on $U\langle n\rangle$ over $E_{11}(\mathcal{A}\sqcup \mathcal{M}_n(\mathbb{C}))E_{11}$ defined by $j_t:=j_{U_t}$, or, in other words, for all $1\leq i,j\leq n$, we have $j_t(u_{ij})=E_{1i}U_tE_{j1}$.

Then, $(j_{t})_{t \geq 0}$ is a free Lévy process on $U\langle n\rangle$ over the non-commutative probability space $\Big(E_{11}(\mathcal{A}\sqcup \mathcal{M}_n(\mathbb{C}))E_{11} , n(\phi\ast \tr_n)\Big)$.
\end{prop}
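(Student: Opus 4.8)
The plan is to verify the four axioms in the definition of a free Lévy process on $U\langle n\rangle$ one at a time, transporting each of them from the free unitary Lévy process $(U_t)_{t\geq 0}$ through the homomorphisms $j_{U_t}$ by means of Proposition~\ref{freeness}. Write $\psi=n(\phi\ast\tr_n)$ for the state on $E_{11}(\mathcal{A}\sqcup \mathcal{M}_n(\mathbb{C}))E_{11}$. For the first axiom, $j_0=j_{U_0}=j_{1_\mathcal{A}}$, and since $j_{1_\mathcal{A}}(u_{ij})=E_{1i}1_\mathcal{A}E_{j1}=E_{1i}E_{j1}=\delta_{ij}E_{11}$ while $E_{11}$ is the unit of $E_{11}(\mathcal{A}\sqcup \mathcal{M}_n(\mathbb{C}))E_{11}$, we get $j_0=\delta\, 1$. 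By Proposition~\ref{freeness}(1) we have $j_s\circ\Sigma=j_{U_s}\circ\Sigma=j_{U_s^{-1}}$, hence $(j_s\circ\Sigma)\star j_t=j_{U_s^{-1}}\star j_{U_t}=j_{U_s^{-1}U_t}$; more generally, setting $V_1=U_{t_1}$ and $V_l=U_{t_{l-1}}^{-1}U_{t_l}$ for $2\leq l\leq k$, the increment homomorphisms of $(j_t)$ associated with $0\leq t_1\leq\cdots\leq t_k$ are exactly $j_{V_1},\dots,j_{V_k}$.

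The key observation, which settles both the stationarity axiom and the weak continuity at $0$, is that for any unitary $V\in\mathcal{A}$ the state $\psi\circ j_V$ on $U_{n}^{\nc}$ depends only on, and in fact polynomially on, the $\ast$-distribution of $V$ in $(\mathcal{A},\phi)$. Indeed, evaluating $\psi\circ j_V$ on a word in $u_{ij},u_{ij}^{\ast}$ gives a $\psi$-moment of the entries $(V)_{ij},(V^{\ast})_{ij}$; by the moment-cumulant formula this is a sum of products of free cumulants of these entries, each of which, by Proposition~\ref{cyclic}, is either $0$ or $n^{1-q}$ times a free cumulant $\kappa_q(V^{\epsilon_1},\dots,V^{\epsilon_q})$ with $\epsilon_i\in\{1,\ast\}$, and free cumulants of $V$ are polynomials in its $\ast$-moments. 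Since $U_s^{-1}U_t$ has the same $\ast$-distribution as $U_{t-s}$, we deduce $\psi\circ\big((j_s\circ\Sigma)\star j_t\big)=\psi\circ j_{U_s^{-1}U_t}=\psi\circ j_{U_{t-s}}=\psi\circ j_{t-s}$, which is the stationarity axiom; and since the $\ast$-distribution of $U_s$ converges to that of $1_\mathcal{A}$ as $s\to 0$, the same polynomial dependence gives $\psi\circ j_s(b)\to\psi\circ j_{1_\mathcal{A}}(b)=\delta(b)$ for every $b\in U_{n}^{\nc}$, which is the continuity axiom.

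It remains to prove the free independence of the increments. By the definition of a free unitary Lévy process, the variables $V_1,\dots,V_k$ above are $\ast$-free in $(\mathcal{A},\phi)$, so it suffices to show that the image $\ast$-algebras $j_{V_1}(U_{n}^{\nc}),\dots,j_{V_k}(U_{n}^{\nc})$ are $\ast$-free in $\big(E_{11}(\mathcal{A}\sqcup\mathcal{M}_n(\mathbb{C}))E_{11},\psi\big)$. This is the $k$-variable analogue of Proposition~\ref{freeness}(2), and follows from the very same argument: each $j_{V_l}(U_{n}^{\nc})$ is generated as a $\ast$-algebra by $\{(V_l)_{ij}\}_{1\leq i,j\leq n}$, and by Proposition~\ref{cyclic} a free cumulant of generators taken from at least two of these subalgebras is a scalar multiple of a mixed $\ast$-cumulant of $V_1,\dots,V_k$, which vanishes since the $V_l$ are $\ast$-free; Proposition~\ref{freenesscar} then gives the asserted $\ast$-freeness, and the proof is complete. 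The only step which is not an immediate invocation of an earlier result is this passage from two to $k$ unitary variables — since freeness is a genuinely multilinear, not merely pairwise, condition it cannot be obtained by iterating the two-variable statement — but the cumulant proof of Proposition~\ref{freeness}(2) carries over verbatim, so this is the mildest of obstacles; everything else is bookkeeping with the identities $j_{U^{-1}}=j_U\circ\Sigma$ and $j_{UV}=j_U\star j_V$.
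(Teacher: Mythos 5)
Your proof is correct and follows exactly the route the paper intends: the paper's own proof is a one-line appeal to Proposition~\ref{freeness} and the definition of a free unitary Lévy process, and your argument is precisely the detailed expansion of that citation. Your observation that the mutual freeness of $k$ increments requires the $k$-variable analogue of Proposition~\ref{freeness}(2) — obtained by rerunning the cumulant argument through Proposition~\ref{cyclic} and Proposition~\ref{freenesscar} rather than by iterating the two-variable statement — is a genuine point that the paper leaves implicit, and you handle it correctly.
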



\begin{proof}The fact that $(j_{t})_{0\leq t}$ is indeed a free Lévy process on $U\langle n\rangle$ follows from Proposition~\ref{freeness}, and from the definition of a free unitary Lévy process $(U_t)_{t\geq0}$.
\end{proof}

\begin{thm}Let $(U_t)_{t\geq0}$ be a free unitary Lévy process in $(\mathcal{A},\phi)$ and let $(j_{t})_{t\geq0}$ be the Lévy process over $U\langle n \rangle$ defined by Proposition~\ref{levyprocess}. For each $N\in \mathbb{N}$, let us consider a process $(U_t^{(N)})_{t\geq0}$ on the classical unitary group $U(N)$.

Assume that the family $\{U_t^{(N)}\}_{t\geq0}$ converges almost surely in $*$-distribution to the family $\{U_t\}_{t\geq0}$ as $N$ tends to $\infty$. Then, the block matrices \label{limitrandomun}
$\left([U_t^{(nN)}]_{ij}\right)_{\substack{1\leq i,j\leq n\\ t\geq0}}$ converge almost surely in $*$-distribution to $ \Big(j_{t}(u_{ij})\Big)_{\substack{1\leq i,j\leq n\\ t\geq0}}$ as $N$ tends to $\infty$. 
\end{thm}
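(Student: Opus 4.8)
The plan is to read this statement off directly from Corollary~\ref{block}. First I would apply that corollary with index set $K=\{1,2\}\times[0,\infty)$, taking $A_{(1,t)}=U_t$ and $A_{(2,t)}=U_t^{*}$ in $(\mathcal{A},\phi)$, with matrix models $A_{(1,t)}^{(N)}=U_t^{(N)}$ and $A_{(2,t)}^{(N)}=(U_t^{(N)})^{*}$. The hypothesis that $\{U_t^{(N)}\}_{t\ge 0}$ converges almost surely in $*$-distribution to $\{U_t\}_{t\ge 0}$ is exactly the convergence assumption of Corollary~\ref{block} for this family. On the other side, by Proposition-Definition~\ref{haarstate} and the definition of $(j_t)_{t\ge 0}$ in Proposition~\ref{levyprocess} we have $E_{1i}U_tE_{j1}=j_{U_t}(u_{ij})=j_t(u_{ij})$ for all $t\ge 0$ and $1\le i,j\le n$, so the limit family $\{E_{1i}U_tE_{j1}\}$ produced by Corollary~\ref{block} is precisely $\{j_t(u_{ij})\}$ (the adjoints being covered by $E_{1i}U_t^{*}E_{j1}=(j_t(u_{ji}))^{*}$). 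Hence Corollary~\ref{block} gives that $\left([U_t^{(nN)}]_{ij}\right)$ converges almost surely in $*$-distribution to $\left(j_t(u_{ij})\right)$, viewed in $\left(E_{11}(\mathcal{A}\sqcup\mathcal{M}_n(\mathbb{C}))E_{11},\,n(\phi\ast\tr_n)\right)$, which is the assertion.

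The one hypothesis of Corollary~\ref{block} that is not literally in the statement is the invariance of the law of $\{U_t^{(nN)}\}_{t\ge 0}$ under unitary conjugation, and handling this is the main (modest) obstacle. By the remark following Corollary~\ref{block} it costs nothing: one replaces the process $(U_t^{(N)})_{t\ge 0}$ by $(V_NU_t^{(N)}V_N^{*})_{t\ge 0}$, where $V_N$ is a Haar unitary of $\mathcal{M}_N(\mathbb{C})$ independent of the process. Such a conjugation leaves the $*$-distribution of $\{U_t^{(N)}\}_{t\ge 0}$ unchanged, so the almost-sure convergence hypothesis persists, while it forces the law to be invariant under unitary conjugation; moreover the limiting object $(j_t)_{t\ge 0}$ attached by Proposition~\ref{levyprocess} depends only on the limit $\{U_t\}_{t\ge 0}$, hence is unaffected. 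The only other thing to keep straight is which normalized trace is at play under the corner embedding $\mathcal{M}_N(\mathbb{C})\hookrightarrow\mathcal{M}_{nN}(\mathbb{C})$ — namely $\tr_N$ on the left corresponds to $n\,\tr_{nN}$ on the right — but this is already accounted for inside the proof of Corollary~\ref{block}.

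Once Corollary~\ref{block} applies there is nothing more to prove. Combined with Proposition~\ref{levyprocess}, it shows that every free Lévy process on $U\langle n\rangle$ of the form $(j_{U_t})_{t\ge 0}$ coming from a free unitary Lévy process $(U_t)_{t\ge 0}$ — and, via the discussion of Section~\ref{matrixmodel}, the free Haar trace as well — is realized as a limit in $*$-distribution of the blocks of genuine random unitary matrices, which is the conclusion announced in the introduction.
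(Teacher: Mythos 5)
Your first paragraph reproduces the paper's proof exactly: the paper's entire argument for Theorem~\ref{limitrandomun} is the single sentence ``setting $\{A_k^{(N)}\}_{k\in K}=\{U_t^{(N)}\}_{t\geq0}$, it is a direct consequence of Corollary~\ref{block}'' (you do not even need to double the index set with the adjoints, since almost sure convergence in $*$-distribution already accounts for them). The problem lies in your second paragraph. Replacing $(U_t^{(N)})_{t\geq0}$ by $(V_NU_t^{(N)}V_N^*)_{t\geq0}$ does not ``cost nothing'': Corollary~\ref{block} applied to the conjugated family yields the convergence of the blocks $[V_{nN}U_t^{(nN)}V_{nN}^*]_{ij}$, which are \emph{not} the blocks $[U_t^{(nN)}]_{ij}$ of the original matrices, and the joint $*$-distribution of the $n^2$ blocks is not preserved under conjugation of the ambient $nN\times nN$ matrix by a unitary that fails to commute with the corner matrices $P_{ij}^{(N)}$. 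The remark following Corollary~\ref{block}, which you are echoing, is adequate when one only wants \emph{some} matrix model, but not here, where the conclusion concerns the blocks of the given matrices.

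In fact the invariance hypothesis cannot be arranged for free, because the statement is false without it. Take $U_t^{(nN)}=\Diag(W_t^{(N)},\ldots,W_t^{(N)})$ with $n$ identical diagonal blocks, where $W_t^{(N)}\to U_t$ almost surely in $*$-distribution; then $U_t^{(nN)}\to U_t$ in $*$-distribution as well, but $[U_t^{(nN)}]_{ij}=0$ identically for $i\neq j$, whereas the limiting block satisfies, for $i\neq j$,
$$n(\phi\ast\tr_n)\bigl(j_t(u_{ij})j_t(u_{ij})^*\bigr)=\frac{1}{n}\,\kappa_2(U_t,U_t^*)=\frac{1}{n}\bigl(1-|\phi(U_t)|^2\bigr)>0$$
whenever $U_t$ is not a scalar (by Proposition~\ref{cyclic}). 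So the unitary invariance of the law of $\{U_t^{(nN)}\}_{t\geq0}$ has to be read as an implicit hypothesis of the theorem --- it holds for Haar unitaries and for the models of~\cite{Cebron2014b} invoked afterwards, which is what the paper tacitly relies on. You were right to notice that the hypothesis is missing from the statement, but the correct reaction is to add it (or to restrict to invariant models), not to claim it can be removed by randomization.
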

In the particular case where $(U_t)_{t\geq0}$ is a free unitary Brownian motion (see the last section of the paper), this theorem above is the result stated in~\cite{Ulrich2014}, proved via stochastic calculus.
\begin{proof}
Setting $\{A_k^{(N)}\}_{k\in K}=\{U_t^{(N)}\}_{t\geq0}$ (with $K=\{{t\geq0}\}$), it is a direct consequence of Corollary~\ref{block}.
\end{proof}

In~\cite{Cebron2014b}, one of the authors defined a matrix model for every unitary free Lévy process $(U_t)_{t\geq0}$. More precisely, for each $N\in \mathbb{N}$, there exists a Lévy process $(U_t^{(N)})_{t\geq0}$ on the classical unitary group $U(N)$ such that the family $\{U_t^{(N)}\}_{t\geq0}$ converges almost surely in $*$-distribution to the family $\{U_t\}_{t\geq0}$. As a consequence, every free Lévy process defined according to Proposition~\ref{levyprocess} from a one-dimensional free Lévy process is indeed the limit of a family of random matrices when the dimension tends to $\infty$.

The rest of the paper is devoted to compute the generator of such free Lévy processes, whose expression is given in Theorem~\ref{thSchurmannbiz}.

\subsection{Generator and Schürmann triple}
In this section, we define two different objects which characterize Lévy processes on $U\langle n\rangle$.
\begin{defi}The \emph{generator} of a free Lévy process $(j_t)_{t \geq 0}$ on $(\mathcal{A},\phi)$ over $U\langle n\rangle$ is the linear form $L:U_{n}^{\nc} \to \mathbb{C}$ defined, for all $u\in U_{n}^{\nc}$, by
$$L(u)=\frac{\diff}{\diff t}\phi\circ j_t (u)=\lim_{t\to 0}\frac{1}{t}\left(\phi( j_t(u))-\delta(u)\right).$$
\end{defi}
In~\cite{Ghorbal2005}, it is proved that $L$ is well-defined and determines completely the family of law $(\phi\circ j_t)_{t \geq 0}$.
The generator satisfies $L(1)=0$, is hermitian and is conditionally positive, in the sense that
\begin{itemize}
\item $L(u^*)=\overline{L(u)}$ for all $u\in U_{n}^{\nc}$ ,
\item $L(u^*u)\geq 0$ for all $u\in U_{n}^{\nc}$ such that $\delta(u)=0$.
\end{itemize}
Conversely, the recent~\cite{Schurmann2014} proves that, for all hermitian and conditionally positive $L:U_{n}^{\nc}\to \C$ such that $L(1)=0$, there exists a free Lévy process on $U\langle n\rangle$ whose generator is $L$. We will call such a linear functional \emph{a generator}, without mentioning any Lévy process. The description of the generators is made easier by the following notion of Schürmann triple.
\begin{defi}A \emph{Schürmann triple} $(\rho,\eta,L)$ of $U\langle n\rangle$ on a Hilbert space $H$ consists of\begin{itemize}
\item a generator $L$,
\item a linear map $\eta:U_{n}^{\nc}\to H$ such that, for all $a,b\in U_{n}^{\nc}$, we have\begin{equation*}L(ab)=\delta(a)L(b)+ \langle \eta(a^*),\eta(b)\rangle+L(a)\delta(b),\end{equation*}
\item   a unital $*$-representation $\rho$ of $U_{n}^{\nc}$ on $H$ such that, for all $a,b\in U_{n}^{\nc}$, we have
\begin{equation*}\eta(ab)=\rho(a)\eta(b)+\eta(a)\delta(b).\end{equation*}
\end{itemize}
\end{defi}
It simplifies the data of $L$ because the three maps $\rho$, $\eta$ and $L$ of a Schürmann triple are uniquely determined by their values on the generators $\{u_{ij},u_{ij}^*\}_{1\leq i,j\leq n}$ of $U_{n}^{\nc}$. A sort of GNS-construction (see~\cite{Schurmann1990}) allows conversely to construct a Schürmann triple $(\rho,\eta,L)$ for every generator $L$.

In the next section, we will prove the following theorem, which computes the Schürmann triple of the Lévy process over $U\langle n \rangle$ defined by Proposition~\ref{levyprocess}.

\begin{thm}
Let $(U_t)_{t\geq0}$ be a free unitary Lévy process in $(\mathcal A,\phi)$ and let $(\rho,\eta,L)$ be its Schürmann triple on a Hilbert space $H$.\label{thSchurmannbiz}
Let $j_t:U\langle n \rangle \to E_{11}(\mathcal A \sqcup \mathcal{M}_n(\mathbb{C}))E_{11}$ be the Lévy process defined by setting, for all $1\leq i,j\leq n$, $j_t(u_{ij})=E_{1i}U_tE_{j1}$. 

The Schürmann triple $(\rho_n,\eta_n,L_n)$ of $(j_t)_{t\geq 0}$ on $H\otimes \mathcal{M}_n(\C)$ is given, for all $1\leq i,j\leq n$, by
\begin{multline}\rho_n(u_{ij})=\frac{1}{n}(\rho(u)-\Id_H)\otimes E_{ij}+\delta_{ij}\Id_H\otimes I_N,\\
\ \ \eta_n(u_{ij})=\eta(u)\otimes E_{ij}, \ \ \eta_n(u_{ij}^*)=\eta(u^*)\otimes E_{ij}, \ \ L_n(u_{ij})=\delta_{ij}L(u).\label{Schurmannbiz}\end{multline}
\end{thm}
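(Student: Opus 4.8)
The strategy is to verify directly that the triple $(\rho_n,\eta_n,L_n)$ given by \eqref{Schurmannbiz} satisfies the three axioms of a Schürmann triple on $H\otimes\mathcal{M}_n(\C)$, and then to invoke the uniqueness of the Schürmann triple attached to a given generator to conclude that it is indeed the one associated to the process $(j_t)_{t\geq 0}$. More precisely, I would first compute the generator $L_n$ of $(j_t)_{t\geq 0}$ from its definition: using Proposition~\ref{cyclic} (or directly the fact that $\phi\circ j_t(u_{ij})=n\,\delta_{ij}\,\tr_n(\tfrac1n\cdot)$-type expression reduces to $\delta_{ij}$ times a function of $\phi(U_t)$), one gets $\frac{\diff}{\diff t}\big|_{t=0}(n(\phi\ast\tr_n))(j_t(u_{ij}))=\delta_{ij}L(u)$, and similarly $L_n(u_{ij}^*)=\delta_{ij}L(u^*)$. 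So the $L_n$ in the statement is the correct generator, and it remains only to check that the proposed $\rho_n$ and $\eta_n$, together with this $L_n$, form \emph{a} Schürmann triple; uniqueness then forces them to be \emph{the} Schürmann triple of $(j_t)_{t\geq 0}$.

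The verification splits into three routine-but-careful algebraic checks, which I would carry out in this order. First, that $\rho_n$ is a unital $*$-representation of $U_{n}^{\nc}$: since $U_{n}^{\nc}$ is the universal $*$-algebra making $(u_{ij})$ unitary, it suffices to check that $(\rho_n(u_{ij}))_{i,j}$ is a unitary matrix in $\mathcal{M}_n(B(H\otimes\mathcal{M}_n(\C)))$. Writing $\rho_n(u_{ij})=\tfrac1n(\rho(u)-\Id_H)\otimes E_{ij}+\delta_{ij}\Id_H\otimes I_n$ and using that $\rho(u)$ is unitary on $H$ (because $u$ is unitary in $U_{n}^{\nc}$ and $\rho$ is a $*$-representation), the sum $\sum_k\rho_n(u_{ki})^*\rho_n(u_{kj})$ telescopes to $\delta_{ij}\Id$ after expanding the Kronecker products; the cross terms in $E_{ki}^*E_{kj}=\delta_{ij}E_{ii}$ combine so that the $\tfrac1n$ factors reproduce $\tfrac1n\sum_k=1$ correctly. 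Second, that $\eta_n$ is a $\rho_n$-$1$-cocycle, i.e. $\eta_n(ab)=\rho_n(a)\eta_n(b)+\eta_n(a)\delta(b)$; again it is enough to check this on generators and on the defining relations, which amounts to verifying $\eta_n(\sum_k u_{ik}u_{kj}^*$-type words) is consistent, and this reduces to the cocycle identity $\eta(uu^*)=\rho(u)\eta(u^*)+\eta(u)$ for the one-dimensional process together with bookkeeping of the matrix units $E_{ij}$. Third, that $L_n(ab)=\delta(a)L_n(b)+\langle\eta_n(a^*),\eta_n(b)\rangle+L_n(a)\delta(b)$, which on generators becomes, e.g. for the relation $\sum_k u_{ik}u_{kj}^*=\delta_{ij}1$, the identity $\sum_k\big(\langle\eta(u^*)\otimes E_{ki},\,\text{something}\rangle+\dots\big)$ collapsing to $0$ by using the one-dimensional relation $L(uu^*)=\langle\eta(u^*),\eta(u^*)\rangle+L(u)+L(u^*)=0$ (since $uu^*=1$) and $\langle E_{ki},E_{kj}\rangle_{\tr_n}=\tfrac1n\delta_{ij}$.

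The main obstacle, and the step deserving the most care, is the second and third checks done \emph{simultaneously on the defining relations} of $U_{n}^{\nc}$ rather than just on generators: one must be sure that the maps $\rho_n,\eta_n,L_n$ are well-defined on the quotient algebra, i.e. that they respect $\sum_k u_{ki}^*u_{kj}=\delta_{ij}=\sum_k u_{ik}u_{jk}^*$. This is where the precise normalization $\tfrac1n$ in $\rho_n$ and the factor $\delta_{ij}$ in $L_n$ are pinned down, and where the trace normalization $\tr_n=\tfrac1n\Tr$ on $\mathcal{M}_n(\C)$ (hence $\langle E_{ij},E_{kl}\rangle=\tfrac1n\delta_{ik}\delta_{jl}$) enters in an essential way. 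An alternative, perhaps cleaner, route that I would consider is to avoid the direct cocycle computation altogether: realize the one-dimensional process $(U_t)$ on a Fock space via its Schürmann triple $(\rho,\eta,L)$, apply the block construction $j_U$ of Section~\ref{buildstate} at the level of the GNS/Fock data, and read off $(\rho_n,\eta_n,L_n)$ from Proposition~\ref{cyclic}'s compression formula together with the functoriality $j_{UV}=j_U\star j_V$ from Proposition~\ref{freeness}; this would make the appearance of $H\otimes\mathcal{M}_n(\C)$ and of the factor $\tfrac1n$ structurally transparent. Either way, once the triple axioms are verified, uniqueness of the Schürmann triple for the generator $L_n$ finishes the proof.
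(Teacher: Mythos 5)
Your overall plan has a genuine gap at the final ``uniqueness'' step. The two things you actually verify --- that $(\rho_n,\eta_n,L_n)$ is an abstract Schürmann triple (correct, and essentially automatic from Proposition~\ref{hwrL} once one checks $\eta_n(u_{ij}^*)=-\sum_k\rho_n(u_{ki})^*\eta_n(u_{kj})$), and that the generator $\mathcal{L}$ of $(j_t)_{t\geq0}$ satisfies $\mathcal{L}(u_{ij})=\delta_{ij}L(u)$ (also correct, by Proposition~\ref{cyclic} with $q=1$) --- do not together imply that $(\rho_n,\eta_n,L_n)$ is \emph{the} triple of the process. The generator of a Lévy process is a linear functional on all of $U_{n}^{\nc}$, and it is its values on words of degree $\geq 2$ that encode the cocycle and the representation, via $L(ab)=\delta(a)L(b)+\langle\eta(a^*),\eta(b)\rangle+L(a)\delta(b)$ and its iterates. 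Two non-equivalent Schürmann triples can perfectly well have generators that agree on the linear span of $1$, $u_{ij}$, $u_{ij}^*$. So ``uniqueness of the Schürmann triple attached to a given generator'' only becomes available once you know $\mathcal{L}=L_n$ on the \emph{whole} algebra, or equivalently once you know that $(\rho_n,\eta_n,\mathcal{L})$ is itself a Schürmann triple --- and that identification is precisely the hard part, which your main argument never addresses.

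This is why the paper's proof is long: it first establishes a representation theorem (Theorem~\ref{gegen}) --- which it points out was still missing in the free case --- realizing any Schürmann triple of $U\langle n\rangle$ as the solution of a free quantum stochastic differential equation on the full Fock space \emph{and} proving by an Itô-calculus argument that the stated triple really is the Schürmann triple of that solution; it then shows (Proposition~\ref{bigeq}), through a nontrivial isomorphism $\Gamma(K)\ast\mathcal{M}_n(\mathbb{C})\simeq\Gamma(K\otimes\mathcal{M}_n(\mathbb{C}))\otimes\mathcal{M}_n(\mathbb{C})$ under which $E_{1i}c_t^*(h)E_{j1}\mapsto c_t^*(h\otimes E_{ij})$ and $E_{1i}\Lambda_t(W)E_{j1}\mapsto\frac1n\Lambda_t(W\otimes E_{ij})$, that the compressed process $E_{1i}U_tE_{j1}$ solves exactly the equation with coefficients $(\rho_n,\eta_n,L_n)$. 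Your closing ``alternative route'' gestures at this strategy but treats its two pillars as available off the shelf, when in fact the Fock-space realization is a result the paper must prove, and Proposition~\ref{cyclic} by itself gives moments, not a cocycle. If you want to keep a direct approach instead, the honest way to close the gap is to compute $\mathcal{L}$ on arbitrary words by differentiating at $t=0$ the moment formulas obtained from Proposition~\ref{cyclic} applied to $U_t$, and to match the resulting functional with $L_n$ degree by degree; that is feasible in principle but is substantial combinatorial work, not the formality your proposal makes it out to be.
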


As a corollary, we have a sufficient characterization for the existence of a random matrix model in terms of the generator (we believe that this condition is not necessary).

\begin{cor}Let $(j_{t})_{t \geq 0}$ be free Lévy process on $U\langle n\rangle$. Let $H$ be a Hilbert space such that the Schürmann triple $(\rho_n,\eta_n,L_n)$ of $(j_{t})_{t \geq 0}$ is given on $H\otimes \mathcal{M}_n(\C)$ by
\begin{multline*}\rho_n(u_{ij})=\frac{1}{n}(W-\Id_H)\otimes E_{ij}+\delta_{ij}\Id_H\otimes I_N,\\
\ \ \eta_n(u_{ij})=h\otimes E_{ij}, \ \ \eta_n(u_{ij}^*)=-W^*h\otimes E_{ij}, \ \ L_n(u_{ij})=(iR-\frac{1}{2}\|h\|^2_H)\delta_{ij},\end{multline*}
where $W$ is a unitary operator of $\mathcal{B}(H)$, $h\in H$ and $R\in \R$. Then, for each $N\in \mathbb{N}$, there exists a process $(U_t^{(N)})_{t\geq0}$ on the classical unitary group $U(N)$ such that the family of $N\times N$-blocks $\left([U_t^{(nN)}]_{ij}\right)_{\substack{1\leq i,j\leq n\\ t\geq0}}$ converges almost surely in $*$-distribution to $ \Big(j_{t}(u_{ij})\Big)_{\substack{1\leq i,j\leq n\\ t\geq0}}$ as $N$ tends to $\infty$.\label{corollaryschurmann}
\end{cor}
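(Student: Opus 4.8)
The strategy is to recognise that the Schürmann triple prescribed in the statement is exactly of the form produced by Theorem~\ref{thSchurmannbiz} from a one-dimensional free unitary Lévy process, and then to transport the matrix model of~\cite{Cebron2014b} through Theorem~\ref{limitrandomun}. So the plan is: (i) build a free unitary Lévy process $(U_t)_{t\geq0}$ whose Schürmann triple realises the given data $(W,h,R)$; (ii) use Theorem~\ref{thSchurmannbiz} to identify $\big(j_t(u_{ij})\big)$ in distribution with the blocks $\big(j_{U_t}(u_{ij})\big)$; (iii) apply~\cite{Cebron2014b} and Theorem~\ref{limitrandomun} to produce the asserted convergence.

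\emph{Step (i).} The algebra $U_1^{\nc}$ is the $*$-algebra generated by a single unitary $u$, with $\delta(u)=1$. On the Hilbert space $H$ I would define a Schürmann triple $(\rho,\eta,L)$ of $U\langle 1\rangle$ by setting $\rho(u)=W$, $\eta(u)=h$ and $L(u)=iR-\tfrac12\|h\|_H^2$, extending by the structural identities. The cocycle relation $\eta(ab)=\rho(a)\eta(b)+\eta(a)\delta(b)$ forces $\eta(u^*)=\eta(u^{-1})=-\rho(u)^{-1}\eta(u)=-W^*h$ and then determines $\eta$ on all of $U_1^{\nc}$; the relation $L(ab)=\delta(a)L(b)+\langle\eta(a^*),\eta(b)\rangle+L(a)\delta(b)$ together with hermiticity determines $L$ and forces $\mathrm{Re}\,L(u)=-\tfrac12\|h\|_H^2$, leaving only the imaginary part $R\in\R$ free — precisely the shape assumed in the statement. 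One then checks that these prescriptions are consistent with the single relation $uu^{-1}=u^{-1}u=1$, that $L$ is hermitian with $L(1)=0$, and that $L$ is conditionally positive; the latter is automatic from the triple, since $L(a^*a)=\|\eta(a)\|_H^2\geq 0$ whenever $\delta(a)=0$. By the reconstruction theorem for free Lévy processes (see~\cite{Schurmann2014}, and~\cite{Ghorbal2005} for the fact that the generator determines the process) there is a free Lévy process on $U\langle 1\rangle$ with generator $L$, hence — as $n=1$ — a free unitary Lévy process $(U_t)_{t\geq0}$ in some $(\mathcal{A},\phi)$ whose minimal Schürmann triple is the $(\rho,\eta,L)$ just built. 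If $W$ does not act minimally on $H$, one harmlessly replaces $H$ by the closed linear span of $\{W^k h:k\in\Z\}$, which changes no moment of the process.

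\emph{Steps (ii) and (iii).} Feeding $(U_t)_{t\geq0}$ into Theorem~\ref{thSchurmannbiz} produces the free Lévy process $j_{U_t}$ of Proposition~\ref{levyprocess} on $U\langle n\rangle$, whose Schürmann triple on $H\otimes\mathcal{M}_n(\C)$ is given by~\eqref{Schurmannbiz}; substituting $\rho(u)=W$, $\eta(u)=h$, $\eta(u^*)=-W^*h$ and $L(u)=iR-\tfrac12\|h\|_H^2$ one sees it coincides with the Schürmann triple assumed for $(j_t)_{t\geq0}$. Since the generator determines the whole family of distributions of a free Lévy process (again~\cite{Ghorbal2005}), the families $\big(j_t(u_{ij})\big)_{1\leq i,j\leq n,\,t\geq0}$ and $\big(j_{U_t}(u_{ij})\big)_{1\leq i,j\leq n,\,t\geq0}$ have the same joint $*$-distribution. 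Finally,~\cite{Cebron2014b} provides, for each $N$, a Lévy process $(U_t^{(N)})_{t\geq0}$ on $U(N)$ such that $\{U_t^{(N)}\}_{t\geq0}$ converges almost surely in $*$-distribution to $\{U_t\}_{t\geq0}$, and Theorem~\ref{limitrandomun} then yields that $\big([U_t^{(nN)}]_{ij}\big)_{1\leq i,j\leq n,\,t\geq0}$ converges almost surely in $*$-distribution to $\big(j_{U_t}(u_{ij})\big)_{1\leq i,j\leq n,\,t\geq0}$, hence to $\big(j_t(u_{ij})\big)_{1\leq i,j\leq n,\,t\geq0}$, as $N\to\infty$.

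The only genuinely technical point is the verification in Step (i) that the three prescribed maps really assemble into a Schürmann triple on $U_1^{\nc}$ — compatibility with $uu^{-1}=1$, hermiticity of $L$, and conditional positivity — which amounts to the (standard) description of Schürmann triples over the group algebra $U_1^{\nc}\cong\C[\Z]$ in terms of a unitary $W$, a $1$-cocycle $h$ and a real parameter $R$. Everything else is matching formulas and quoting Theorems~\ref{thSchurmannbiz} and~\ref{limitrandomun} together with~\cite{Cebron2014b}.
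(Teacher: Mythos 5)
Your proposal is correct and follows essentially the same route as the paper: identify the data $(W,h,R)$ as a Schürmann triple of $U\langle 1\rangle$ on $H$ (the paper quotes Proposition~\ref{hwrL} for this, where you verify the structural identities by hand), realise it by a free unitary Lévy process $(U_t)_{t\geq0}$, match Schürmann triples via Theorem~\ref{thSchurmannbiz} to conclude that $(j_t)$ and $(j_{U_t})$ have the same distribution, and then combine the matrix model of~\cite{Cebron2014b} with Theorem~\ref{limitrandomun}. No gaps.
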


We give the proof of Corollary~\ref{corollaryschurmann} right now, and postpone the proof of Theorem~\ref{thSchurmannbiz} to the next section.

\begin{proof}Let us show that we are indeed in the situation of Theorem~\ref{thSchurmannbiz}, and that $W$, $h$ and $R$ can be read as the Schürmann triple of some Lévy process over $U\langle 1 \rangle$. This is a consequence of the following general description of the generators on $U\langle n\rangle$.

\begin{prop}[Proposition 4.4.7 of~\cite{Schurmann1993}]Let $H$ be a Hilbert space, $(h_{ij})_{1\leq i,j\leq n}\in \mathcal{M}_n(H)$ be elements of $H$, $(W_{ij})_{1\leq i,j\leq n}\in \mathcal{M}_n(\mathcal{B}(H))$ unitary and $(R_{ij})_{1\leq i,j\leq n}\in \mathcal{M}_n(\mathbb{C})$ self-adjoint. Then there exists a unique Schürmann triple $(\rho,\eta,L)$ on $H$ such that\label{hwrL}
\begin{equation}
\rho(u_{ij})=W_{ij};\ \
\eta(u_{ij})=h_{ij};\ \ \eta(u_{ij}^*)=-\sum_{k=1}^nW_{ki}^*h_{kj};\ \ L(u_{ij})=iR_{ij}-\frac{1}{2}\sum_{k=1}^n\langle h_{ki},h_{kj}\rangle_{H}.\label{Schurmanntriple}
\end{equation}
Conversely, each generator $L$ appears in a Schürmann triple $(\rho,\eta,L)$ on a Hilbert space $H$ as~\eqref{Schurmanntriple} for some $(h_{ij})_{1\leq i,j\leq n}$, $(W_{ij})_{1\leq i,j\leq n}$ unitary, and $(R_{ij})_{1\leq i,j\leq n}$ selfadjoint given by
\begin{equation}
h_{ij}=\eta(u_{ij});\ \ W_{ij}=\rho(u_{ij});\ \ R_{ij}=-i\left(L(u_{ij})+\frac{1}{2}\sum_{k=1}^n\langle \eta(u_{ki}),\eta(u_{kj})\rangle_{H}\right).\label{hwr}
\end{equation}
\end{prop}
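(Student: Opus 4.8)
The plan is to exploit that all three maps of a Schürmann triple are determined by their values on the generators, so that the proposition reduces to (i) identifying which generator data is free and which is forced by the defining relations of $U_{n}^{\nc}$, and (ii) checking that the prescribed data extends to well-defined maps on the quotient. I would begin with the representation $\rho$: by the universal property of $U_{n}^{\nc}$, a unital $*$-representation on $H$ is the same as a choice of matrix $(W_{ij})=(\rho(u_{ij}))\in\mathcal{M}_n(\mathcal{B}(H))$ for which $\rho(u)=(W_{ij})$ is unitary, since applying $\rho$ entrywise to $u^*u=I=uu^*$ turns the two defining families $\sum_k u_{ki}^*u_{kj}=\delta_{ij}$ and $\sum_k u_{ik}u_{jk}^*=\delta_{ij}$ into $W^*W=I=WW^*$. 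This yields the bijection between representations $\rho$ and unitaries $W$, with no further constraint.

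Next I would determine $\eta$ and $L$. Since $\eta(1)=0$ is forced by the cocycle rule, one may prescribe $\eta(u_{ij})=h_{ij}$ freely, and then apply $\eta$ to the relation $\sum_k u_{ki}^*u_{kj}=\delta_{ij}$: the cocycle identity $\eta(u_{ki}^*u_{kj})=\rho(u_{ki}^*)\eta(u_{kj})+\eta(u_{ki}^*)\delta(u_{kj})$ collapses, after summation over $k$, to a single equation that solves for $\eta(u_{ij}^*)$, giving the formula in~\eqref{Schurmanntriple}. Likewise, using $L(1)=0$ and hermiticity $L(u_{ij}^*)=\overline{L(u_{ij})}$, I would apply $L$ to the same relation through $L(ab)=\delta(a)L(b)+\langle\eta(a^*),\eta(b)\rangle+L(a)\delta(b)$; this produces $0=L(u_{ij})+\sum_k\langle h_{ki},h_{kj}\rangle+\overline{L(u_{ji})}$, and substituting the ansatz $L(u_{ij})=iR_{ij}-\tfrac12\sum_k\langle h_{ki},h_{kj}\rangle$ shows the equation holds precisely when $R_{ij}=\overline{R_{ji}}$, that is, when $(R_{ij})$ is self-adjoint. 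Running this computation backwards proves the converse: starting from an arbitrary triple, the quantities $h_{ij}=\eta(u_{ij})$, $W_{ij}=\rho(u_{ij})$, and $R_{ij}$ defined by~\eqref{hwr} automatically satisfy $W$ unitary, $R$ self-adjoint, and the identities~\eqref{Schurmanntriple}.

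It then remains to prove existence, namely that the prescribed values genuinely extend to maps on $U_{n}^{\nc}$. For this I would first define $\rho_F,\eta_F,L_F$ on the free $*$-algebra $F=\mathbb{C}\langle u_{ij},u_{ij}^*\rangle$: the map $\rho_F$ as the $*$-representation with $\rho_F(u_{ij})=W_{ij}$, the map $\eta_F$ as the unique $\rho_F$-$\delta$ cocycle with the prescribed values on the free generators $u_{ij}$ and $u_{ij}^*$, and $L_F$ built by left-peeling recursion from the rule $L_F(ab)=\delta(a)L_F(b)+\langle\eta_F(a^*),\eta_F(b)\rangle+L_F(a)\delta(b)$ with the prescribed values on generators, after which a routine induction using the cocycle property of $\eta_F$ shows this rule holds for all $a,b$. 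To descend to the quotient by the two-sided $*$-ideal generated by the relations $r_{ij}=\sum_k u_{ki}^*u_{kj}-\delta_{ij}$ and $s_{ij}=\sum_k u_{ik}u_{jk}^*-\delta_{ij}$ (whose adjoints satisfy $r_{ij}^*=r_{ji}$ and $s_{ij}^*=s_{ji}$, hence stay within the family), I would check $\delta(r_{ij})=0$, $\rho_F(r_{ij})=0$ by unitarity of $W$, and $\eta_F(r_{ij})=0$, $L_F(r_{ij})=0$; the latter two are exactly the computations above, the vanishing of $\eta_F$ on the family $s_{ij}$ requiring the row relation $WW^*=I$ to complement the column relation $W^*W=I$ used for $r_{ij}$. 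Once vanishing holds on these ideal generators, the multiplicative, cocycle, and coboundary rules propagate it across arbitrary products $arb$, so that $\rho,\eta,L$ descend to $U_{n}^{\nc}$. Conditional positivity of $L$ is then automatic: for $\delta(a)=0$ the coboundary rule gives $L(a^*a)=\langle\eta(a),\eta(a)\rangle=\|\eta(a)\|^2\geq0$.

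The step I expect to be most delicate is this descent, and in particular the well-definedness of $L_F$ on $F$: the bilinear form $\sigma(a,b)=\langle\eta_F(a^*),\eta_F(b)\rangle$ must form a Hochschild $2$-cocycle for the $\delta$-bimodule $\mathbb{C}$ so that the primitive $L_F$ exists, and one must verify that self-adjointness of $(R_{ij})$ is precisely what reconciles the prescribed generator values with the relation $r_{ij}$ and with hermiticity of $L$. Uniqueness, by contrast, is immediate: any two triples agreeing on $\{u_{ij},u_{ij}^*\}$ coincide everywhere, since $\rho,\eta,L$ are determined on products by the defining identities and $U_{n}^{\nc}$ is generated by these elements.
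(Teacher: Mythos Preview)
The paper does not prove this proposition at all: it is quoted verbatim as Proposition~4.4.7 of Sch\"urmann's monograph and used as a black box inside the proof of Corollary~\ref{corollaryschurmann}. So there is no ``paper's own proof'' to compare against; your proposal is a proof of a result the authors import from the literature.

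That said, your outline is the standard argument and is essentially correct. Two small points are worth tightening. First, when you apply the cocycle identity to $\sum_k u_{ki}^*u_{kj}=\delta_{ij}$, the term $\sum_k\eta(u_{ki}^*)\delta(u_{kj})$ collapses to $\eta(u_{ji}^*)$, not $\eta(u_{ij}^*)$; so you solve for $\eta(u_{ji}^*)$ and then relabel. More importantly, you must check that the \emph{two} families of relations $r_{ij}=\sum_k u_{ki}^*u_{kj}-\delta_{ij}$ and $s_{ij}=\sum_k u_{ik}u_{jk}^*-\delta_{ij}$ impose \emph{consistent} constraints on $\eta(u_{ij}^*)$: each family yields a formula for $\eta(u_{ij}^*)$ in terms of $W$ and $h$, and these agree precisely because $W$ is two-sidedly unitary. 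You allude to this in the descent step (using $W^*W=I$ for $r_{ij}$ and $WW^*=I$ for $s_{ij}$), but it deserves to be stated explicitly, since it is the reason the prescription on the free algebra genuinely kills the whole ideal. The analogous consistency check for $L_F$ on the two families is where self-adjointness of $R$ enters, exactly as you say. With those two clarifications your argument goes through.
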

Using this proposition for $W$, $h$ and $R$ shows that the generator $(\rho_n,\eta_n,L_n)$ can be written in the form~\eqref{Schurmannbiz} for some Schürmann triple $(\rho,\eta,L)$ on $H$. But let us consider a free unitary Lévy process $(U_t)_{t\geq0}$ with Schürmann triple $(\rho,\eta,L)$, and the Lévy process $(j_{U_t})_{t\geq 0}$ of Theorem~\ref{thSchurmannbiz} defined by setting, for all $1\leq i,j\leq n$, $j_{U_t}(u_{ij})=E_{1i}U_tE_{j1}$. Using the result~\cite[Theorem 3]{Cebron2014b}, there exists a random matrix model $(U_t^{(N)})_{t\geq0}$ on the unitary group for the Lévy process $(U_t)_{t\geq0}$, and Theorem~\ref{limitrandomun} allows us to conclude that $\left([U_t^{(nN)}]_{ij}\right)_{\substack{1\leq i,j\leq n\\ t\geq0}}$ is a random matrix model for $(j_{U_t})_{t\geq 0}$. Theorem~\ref{thSchurmannbiz} shows that $(j_{U_t})_{t\geq 0}$ has the same Schürmann triple that our Lévy process $(j_t)_{t\geq 0}$. Thus their distributions are equal, and $\left([U_t^{(nN)}]_{ij}\right)_{\substack{1\leq i,j\leq n\\ t\geq0}}$ is also a random matrix model for $(j_t)_{t\geq 0}$.
\end{proof}

\subsection{Proof of Theorem~\ref{thSchurmannbiz}}In the three next steps, we will
\begin{enumerate}
\item establish a concrete realization of any free Lévy process $(j_t)_{t\geq 0}$ on $U\langle n\rangle$ on a full Fock space, starting from any Schürmann triple;
\item show that, considering a one dimensional free Lévy process $(U_t)_{t\geq 0}$, this concrete realization behaves nicely when applying the boosting $j_{U_t}(u_{ij})=E_{1i}U_tE_{j1}$ to define a free Lévy process $(j_{U_t})_{t\geq 0}$ on $U\langle n\rangle$;
\item conclude the proof by reading the Schürmann triple directly from the stochastic equation of $(j_{U_t})_{t\geq 0}$.
\end{enumerate}
\subsection*{Step 1}
\label{representation}
In this step, we give a direct construction of a free Lévy process starting from a Schürmann triple of $U\langle n\rangle$. To achieve this purpose, we will use the free quantum stochastic calculus. We do not recall the definition of the free stochastic equations on the full Fock space, but we define now the objects involved, and we refer the reader to~\cite{Kummerer1992} and~\cite{Skeide2000} for further details.

Let us consider a Hilbert space $H$. We denote by $K$ the Hilbert space $L^{2}(\mathbb{R}, H)\simeq L^{2}(\mathbb{R})\otimes H$, and consider the full Fock space
$$\Gamma(K)=\mathbb{C}\Omega \oplus \bigoplus_{n\geq 1} K^{\otimes n}.$$
We turn $\mathcal{B}(\Gamma(K))$, the $*$-algebra of bounded operator on $\Gamma(K)$, into a noncommutative probability space by endowing it with the state $\tau(\cdot)=\langle  \Omega, (\cdot)\Omega\rangle$.
Let $h\in H$ and $t\geq 0$. The creation operator $c_t(h)\in \mathcal{B}(\Gamma(K))$ is defined by setting, for all $n\geq 0$,
$$c_t(h)(k_1\otimes\cdots \otimes k_n)=(h1_{[0,t[})\otimes k_1\otimes \cdots \otimes k_n, $$
and the annihilation operator $c_t^*(h)\in \mathcal{B}(\Gamma(K))$ is its adjoint operator. Let $W$ a bounded operator on $H$ and $t\geq 0$. The conservation operator $\Lambda_t(W)\in \mathcal{B}(\Gamma(K))$ is defined by setting, for all $n\geq 1$,
$$\Lambda_t(W) ((h1_{[r,s[})\otimes\cdots \otimes k_n)=((W(h)1_{[0,t[\cap[r,s[} )\otimes\cdots \otimes k_n)$$
and $\Lambda_t(W) (\Omega)=0$ otherwise.

The following general result is the free counterpart of the general results of Schürmann (see Section~4.4. of~\cite{Schurmann1993} for the tensor case). The free case turns out to be the only case which has not yet been written down.\label{freesto}
\begin{thm}Let $H$ be a Hilbert space, and let $(\rho,\eta,L)$ be a Schürmann triple of $U\langle n\rangle$ on the Hilbert space $H$. Then the coupled free stochastic equations\label{gegen}
\begin{equation}
\diff j_t(u_{ij})=\sum_{1\leq k \leq n}j_t(u_{ik})\Big(\diff c_t^*(\eta(u_{kj}))+\diff c_t(\eta(u_{kj}^*))+\diff \Lambda_t((\rho-\delta)(u_{kj}))+L(u_{kj})\diff t\Big)\label{stoeq}\end{equation}
for $1\leq i,j\leq n$, with initial conditions $j_0(u_{ij})=\delta_{ij} Id$, has a unique solution $\big(j_{t}(u_{ij})_{i,j=1}^n\big)_{t\geq 0}$ which extends to a free Lévy process $(j_t)_{t\geq 0}$ on $U\langle n\rangle$ with value in $(\mathcal{B}(\Gamma(L^2(\mathbb{R},H))),\tau),$ and with generator $L$.
\end{thm}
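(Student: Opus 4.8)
The strategy is the standard one for free stochastic differential equations driven by a Schürmann triple, adapted to the matrix-valued coupled system. I would proceed in three stages: (i) construct a solution of \eqref{stoeq} by Picard-type iteration on the full Fock space; (ii) check that the solution matrix $(j_t(u_{ij}))_{i,j=1}^n$ is indeed unitary, so that $j_t$ extends to a $*$-homomorphism on $U_n^{\nc}$; (iii) verify the increment and freeness properties, and identify the generator.

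\textbf{Existence and uniqueness.} First I would rewrite the system \eqref{stoeq} in matrix form: setting $J_t = (j_t(u_{ij}))_{i,j=1}^n \in \mathcal{M}_n(\mathcal{B}(\Gamma(K)))$, the equation reads $\diff J_t = J_t \, \diff X_t$ where $\diff X_t = \diff C_t^* + \diff C_t + \diff \Lambda_t + L\,\diff t$ is the $\mathcal{M}_n$-valued free stochastic integrator built from $\eta$, $\eta(\cdot^*)$, $\rho-\delta$ and $L$ applied entrywise to the generators. The free Itô table for $c_t(h)$, $c_t^*(h)$, $\Lambda_t(W)$ (as in~\cite{Kummerer1992,Skeide2000}) gives a bound of the form $\|\diff(AB)\| \lesssim \|A\|\,\|B\|\,\diff t$ on the relevant integrals, because products of free increments reduce to scalar multiples of $\diff t$ or of lower-order integrals. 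A Picard iteration $J^{(0)}_t = I_n$, $J^{(k+1)}_t = I_n + \int_0^t J^{(k)}_s \, \diff X_s$ then converges in operator norm uniformly on compact time intervals, by the usual Gronwall/Grönwall-type estimate on $\sup_{s\le t}\|J^{(k+1)}_s - J^{(k)}_s\|$; this yields the unique adapted solution.

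\textbf{Unitarity.} This is the step I expect to be the main obstacle, and it is where the Schürmann-triple axioms enter crucially. One computes $\diff(J_t^* J_t)$ and $\diff(J_t J_t^*)$ using the free Itô formula. The drift terms produce $L(u_{ik}^* u_{kj})$-type expressions, the bracket terms produce $\langle \eta(u_{ki}), \eta(u_{kj})\rangle$-type expressions, and the conservation contributions produce $\rho$-type expressions; summing over $k$ and invoking the two defining identities of a Schürmann triple, namely $L(ab) = \delta(a)L(b) + \langle\eta(a^*),\eta(b)\rangle + L(a)\delta(b)$ and $\eta(ab) = \rho(a)\eta(b) + \eta(a)\delta(b)$, together with the unitarity relations $\sum_k u_{ki}^* u_{kj} = \delta_{ij} = \sum_k u_{ik} u_{jk}^*$ in $U_n^{\nc}$, one finds that all the stochastic differentials cancel, so $\diff(J_t^* J_t) = 0 = \diff(J_t J_t^*)$. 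Since $J_0 = I_n$, we get $J_t^* J_t = I_n = J_t J_t^*$ for all $t$. Hence $J_t$ is unitary, and by the universal property of $U_n^{\nc}$ recalled after its definition, there is a unique $*$-homomorphism $j_t \colon U_n^{\nc} \to \mathcal{B}(\Gamma(K))$ with $j_t(u_{ij}) = (J_t)_{ij}$. One must take a little care that this algebraic manipulation is legitimate at the level of the stochastic integrals — i.e., that the formal identities among the integrands, which hold because they are consequences of relations in $U_n^{\nc}$ pushed through $\eta$, $\rho$, $L$, do imply the equality of the integrals — but this follows from the explicit form of the free Itô table.

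\textbf{Lévy property and generator.} Finally, stationarity and freeness of increments follow from the standard structure of free stochastic calculus on $\Gamma(L^2(\mathbb{R},H))$: the increments over disjoint time intervals are built from creation/annihilation/conservation operators on orthogonal subspaces $L^2([s,t),H)$ of the one-particle space, hence are free with respect to $\tau$, and the time-homogeneity of the coefficients gives stationarity $\phi \circ ((j_s\circ\Sigma)\star j_t) = \phi\circ j_{t-s}$; weak continuity at $0$ is immediate from the integral equation. For the generator, one applies $\tau$ to \eqref{stoeq}: since $\tau$ kills $c_t^*$, $c_t$ and $\Lambda_t$ (they send $\Omega$ off $\mathbb{C}\Omega$ or to $0$), only the drift survives, giving $\frac{\diff}{\diff t}\tau(j_t(u_{ij})) = \sum_k \tau(j_t(u_{ik})) L(u_{kj})$, and at $t=0$ (where $j_0(u_{ik}) = \delta_{ik}\Id$) this yields $\frac{\diff}{\diff t}\big|_{t=0}\tau(j_t(u_{ij})) = L(u_{ij})$, so the generator of $(j_t)_{t\ge0}$ is indeed $L$. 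This completes the proof.
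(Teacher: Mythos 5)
Your treatment of existence, uniqueness and unitarity takes a different (and in principle acceptable) route from the paper: you propose a Picard iteration and a direct It\^{o} computation of $\diff(J_t^*J_t)$, whereas the paper packages the $n^2$ equations into a single equation on the full Fock $\mathcal{M}_n(\mathbb{C})$-module, converts the Sch\"urmann triple into data $(h_{ij})$, $(W_{ij})$ unitary, $(R_{ij})$ selfadjoint via Proposition~\ref{hwrL}, and then invokes Skeide's general existence theorem, which delivers existence, uniqueness, unitarity of the solution, stationarity and freeness of increments all at once. Your sketch of these steps is plausible but would need the actual norm estimates for the free integrals (in particular for the conservation part) to be carried out.

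The genuine gap is in the identification of the generator. You compute $\frac{\diff}{\diff t}\big|_{t=0}\tau(j_t(u_{ij}))=L(u_{ij})$ and conclude that the generator of $(j_t)_{t\geq0}$ is $L$. This does not follow: the generator $\mathcal{L}$ of the process is a linear functional on all of $U_{n}^{\nc}$, and a hermitian conditionally positive functional is not determined by its values on the finitely many elements $u_{ij}$, $u_{ij}^*$ (two distinct generators can agree on the generators of the algebra and differ on $u_{ij}^*u_{kl}$, on words of length three, etc.). The values on generators determine $\mathcal{L}$ only once one knows that $\mathcal{L}$ fits into a Sch\"urmann triple with the \emph{same} $\rho$ and $\eta$, i.e.\ one must prove
$$\mathcal{L}(b^*c)=\mathcal{L}(b^*)\delta(c)+\delta(b^*)\mathcal{L}(c)+\langle \eta(b),\eta(c)\rangle$$
for all $b,c\in U_{n}^{\nc}$. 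This is exactly the substantial part of the paper's argument: one applies the quantum It\^{o} product formula to $\diff j_t(b^*c)$, observes that under the vacuum state only the $\diff t$-part survives, and then shows by an induction over monomials $b=u^{\epsilon_1}_{i_1j_1}\cdots u^{\epsilon_r}_{i_rj_r}$, using the It\^{o} table term by term, that the $\diff c_t$-part of $\diff j_t(b)$ evaluated at $t=0$ is precisely $\diff c_t(\eta(b))$, whence the It\^{o} correction contributes $\langle\eta(b),\eta(c)\rangle\diff t$. Without this step (or an equivalent one) your proof only shows that the two candidate generators agree on the $u_{ij}$, which is strictly weaker than the assertion of the theorem.
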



\begin{proof}The existence and uniqueness of the solution of~\eqref{stoeq} is a consequence of a very general theorem in~\cite{Skeide2000}, from which we can also deduce the extension of the solution to a free Lévy process. On the contrary, proving that $L$ is indeed the generator of this solution is not a direct consequence of~\cite{Skeide2000}, and requires some computations very similar to those of~\cite{Schurmann1990}.

The existence theorem which we will use is~\cite[Theorem~10.1]{Skeide2000}. In order to use Theorem~10.1. of~\cite{Skeide2000}, we must write the $n^2$ stochastic equations~\eqref{stoeq} as one stochastic equation involving only one variable. This is routine using the explanations of Chapter~13 of~\cite{Skeide2000}. For the convenience of the reader, we sketch the ideas: we consider the full Fock $\mathcal{M}_N(\mathbb{C})$-module $\mathcal{M}_N(\mathcal{B}(\Gamma(K)))\simeq \mathcal{B}(\C^n\otimes \Gamma(K))$. The stochastic equations~\eqref{stoeq} can be summed up into the following stochastic equation in $\mathcal{M}_N(\mathcal{B}(\Gamma(K)))$ (where $c_t$, $c_t^*$ and $\Lambda_t$ are defined accordingly)
\begin{multline}
\diff \Big(j_{t}(u_{ij})_{i,j=1}^n\Big)\\=\Big(j_{t}(u_{ij})_{i,j=1}^n\Big)\cdot \Big(\diff c_t^*(\eta(u_{ij})_{i,j=1}^n)+\diff c_t(\eta(u_{ij}^*)_{i,j=1}^n)+\diff \Lambda_t(\rho(u_{ij})_{i,j=1}^n-\Id)+L(u_{ij})_{i,j=1}^n\diff t\Big)\label{stoeqdeux}\end{multline}
with initial condition $(j_{t}(u_{ij}))_{i,j=1}^n=\Id$. Let us define $h=(h_{ij})_{1\leq i,j\leq n}$, $W=(W_{ij})_{1\leq i,j\leq n}$ unitary, and $R=(R_{ij})_{1\leq i,j\leq n}$ selfadjoint by the relation~\eqref{hwr}. The stochastic equation~\eqref{stoeqdeux} can be rewritten
\begin{equation}
\diff \Big(j_{t}(u_{ij})_{i,j=1}^n\Big)=\Big(j_{t}(u_{ij})_{i,j=1}^n\Big)\cdot \Big(\diff c_t^*(h)-\diff c_t(W^{-1}h)+\diff \Lambda_t(W-\Id)+\big(iR-\frac{1}{2}\sum_{k=1}^n\langle h_{ki},h_{kj}\rangle_{H}\big)\diff t\Big).\label{stoeqtrois}\end{equation}According to Theorem~10.1. of~\cite{Skeide2000} (see the end of~\cite[Chapter~10]{Skeide2000} to make the link with this particular case), there exists a unique solution to~\eqref{stoeqtrois} whenever $W=(W_{ij})_{1\leq i,j\leq n}$ is unitary and $R=(R_{ij})_{1\leq i,j\leq n}$ is selfadjoint, which is indeed true thanks to Proposition~\ref{hwrL}. Finally, there exists a unique solution $(j_{t}(u_{ij}))_{i,j=1}^n$ to the coupled stochastic equations~\eqref{stoeq}, and another consequence of~\cite[Theorem~10.1]{Skeide2000} is that $(j_{t}(u_{ij}))_{i,j=1}^n$ is unitary. This is sufficient to  extend $(j_{t}(u_{ij}))_{i,j=1}^n$ as a process $(j_t)_{t\geq 0}$ of quantum random variables. The stationary of the distributions is a consequence of the stationary of the underlying driven process and the freeness of the increments is a consequence of the particular underlying filtration for which $(j_{t}(u_{ij}))_{i,j=1}^n$ is adapted (see Chapter~11 of~\cite{Skeide2000} for the statements of those two facts).

It remains to prove that $L$ is indeed the generator of $(j_t)_{t\geq 0}$. Let us denote by $\mathcal{L}$ the generator of $(j_t)_{t\geq 0}$, defined for all $a\in U_{n}^{\nc}$ by \begin{equation}\mathcal{L}(a)=\frac{\diff}{\diff t}\phi\circ j_t (a).\label{defcall}\end{equation}In order to prove that $\mathcal{L}=L$, it suffices to prove first that, for all $b,c\in U_{n}^{\nc}$, we have
\begin{equation}\mathcal{L}(b^*c)=\mathcal{L}(b^*)\delta(c)+\delta(b^*)\mathcal{L}(c)+\langle \eta(b),\eta(c)\rangle,\label{ITODEUX}\end{equation}
which implies that $(\rho,\eta,\mathcal{L})$ is a Schürmann triple, and to prove secondly that $(\mathcal{L}(u_{i,j}))_{i,j=1}^n=(L(u_{i,j}))_{i,j=1}^n$, which implies that the Schürmann triples $(\rho,\eta,\mathcal{L})$ and $(\rho,\eta,L)$ are equal.

The quantum stochastic calculus allows us to write the quantum stochastic differential equation of $j_t(b^*c)$, thanks to the following result.
\begin{thm}[Corollary 9.2. of~\cite{Skeide2000}]Let $h,h'\in H$ and $W,W'\in \mathcal{B}(H)$. Let $I_t$ be one of the following four processes $t\mapsto t$, $c_t(h)$, $c^*_t(h)$ or $\Lambda_t(W)$, and $I'_t$ one of the following four processes $t\mapsto t$, $c_t(h')$, $c^*_t(h')$ or $\Lambda_t(W')$. Let $F,G,F'$ and $G'$ be adapted and bounded. For all $M_t$ and $M'_t$ such that $\diff M_t=F_t\diff I_t G_t$ and $\diff M'_t=F_t'\diff I_t' G_t'$, we have
$$\diff (M_t M'_t) =F_t\diff I_t (G_t  M'_t) +(M_t F_t')\diff I_t' G_t' +\tau(G_tF'_t) F_t\diff I_t'' G'_t,$$
where the integrator $\diff I''_t$ has to be chosen according to Itô's table (see Table~\ref{LLBM}).

\begin{table}[h]
\centering
\ra{1.3}
\begin{tabular}{@{}l|cccc@{}}\toprule
$\diff I_t\backslash  \diff I'_t$ & $\diff t$ & $\diff c_t(h')$ & $\diff c^*_t(h')$ & $\diff \Lambda_t(W')$\\ \midrule

$\diff t$ & $0$ & $0$ & $0$ & $0$\\
$\diff c_t(h)$ & $0$ & $0$ & $0$ & $0$\\
$\diff c^*_t(h)$& $0$ &$\langle h,h' \rangle \diff t$ & $0$ & $\diff c^*_t(W'^*h)$\\
$\diff\Lambda_t(W)$& $0$ & $\diff c_t(Wh')$ & $0$ & $\diff \Lambda_t(WW')$\\
 \bottomrule
\end{tabular}
\bigskip\caption{\label{LLBM}Itô's table}
\end{table}
\end{thm}

By induction, it follows that $j_t(b)$ and $j_t(c)$, which can be written as a polynomial in the operators $\{j_t(u_{i,j}),j_t(u_{i,j})^*\}_{1\leq i,j\leq n}$, satisfies a quantum stochastic differential equation. Moreover, by the previous theorem,
\begin{equation}\diff j_t(b^*c)=\diff j_t(b^*) j_t(c) + j_t(b^*)\diff j_t(c)+\diff j_t(b^*)\cdot\diff j_t(c)\label{ITOUN}\end{equation}
where the third term is computed thanks to the quantum Ito table. But in the definition~\eqref{defcall} of $\mathcal{L}$,
we are only dealing with expectations in the vacuum state $\tau$, and the $\diff c_t$-part, the $\diff c_t^*$-part and the $\diff \Lambda_t$-part are martingales under te vacuum state $\tau$. Thus we need only to compute the integrand of the $\diff t$-part of $\diff j_t(b^*c)$. This coefficient is a complex valued function in $t\in \R_+$ and its value at $t=0$ gives us $\mathcal{L}(b^*c)$. Using the initial condition, one checks that the first two terms on the right hand side of ~\eqref{ITOUN} give rise, under the vacuum state, to the first two terms on the right hand side of~\eqref{ITODEUX}. We are left with the computation of the coefficient of the $\diff t$-part of $\diff j_t(b^*)\cdot\diff j_t(c)$ at $t=0$. Because of the Ito table, this $\diff t$-part is coming from the $\diff c_t$-parts of $\diff j_t(b)$ and $\diff j_t(c)$ by the formula \begin{equation}\diff c_t^*(h)\cdot\diff c_t(h')=\langle h,h'\rangle \diff t.\label{ctito}\end{equation}
Thus we are left to compute the $\diff c_t$-parts of $\diff j_t(b)$ and $\diff j_t(c)$. Of course, we can assume that both $b$ and $c$ are monomials in $u_{ij}$ and $u_{ij}^*$. Assuming $b=u^{\epsilon_1}_{i_1,j_1}\cdots u^{\epsilon_r}_{i_r,j_r}$, we can compute from the differential equation of $j_t$ and the quantum Ito table the exact expression for the $\diff c_t$-part of $\diff j_t(b)$. For simplicity, we give here the expression of the $\diff c_t$-part of $\diff j_t(b)$ where we have already put the integrand at time $t=0$, as this will not affect the final result (notice that it allows us to replace $j_0(u_{ij})$ by $\delta(u_{ij})$, and $\sum_{k=1}^nj_0(u_{ik})\diff \Lambda_t((\rho-\delta)(u_{kj}))$ by $\diff \Lambda_t((\rho-\delta)(u_{ij}))$):
\begin{align*}
&\hspace{-0.5cm}\sum_{\substack{1\leq l\leq r\\1\leq m(1)\leq \ldots\leq m(l) \leq r }}\delta(u^{\epsilon_1}_{i_1,j_1}\cdots \hat{u}^{\epsilon_{m(1)}}_{i_{m(1)}j_{m(1)}}\cdots \hat{u}^{\epsilon_{m(l-1)}}_{i_{m(l-1)}j_{m(l-1)}}\cdots \hat{u}^{\epsilon_{m(l)}}_{i_{m(l)}j_{m(l)}}\cdots u^{\epsilon_r}_{i_r,j_r})\\
&\hspace{2cm}\cdot \diff \Lambda_t((\rho-\delta)(u^{\epsilon_{m(1)}}_{i_{m(1)}j_{m(1)}}))\cdots \diff \Lambda_t((\rho-\delta)(u^{\epsilon_{m(l-1)}}_{i_{m(l-1)}j_{m(l-1)}}))\diff c_t(\eta(u^{\epsilon_{m(l)}}_{i_{m(l)}j_{m(l)}})\\
=&\sum_{1\leq l\leq r}\diff \Lambda_t(\rho(u_{i_1j_1}^{\epsilon_{1}}))\cdots \diff \Lambda_t(\rho(u_{i_{l-1}j_{l-1}}^{\epsilon_{l-1}}))\diff c_t(\eta(u_{i_lj_l}^{\epsilon_l})\delta(u^{\epsilon_{l+1}}_{i_{l+1},j_{l+1}}\cdots u^{\epsilon_r}_{i_r,j_r})\\
=&\diff c_t(\eta(u^{\epsilon_1}_{i_1,j_1}\cdots u^{\epsilon_r}_{i_r,j_r}))=\diff c_t(\eta(b)),
\end{align*}
where the hats mean that we omit the terms in the product. Finally, using~\eqref{ctito}, the integrand of the $\diff t$-part of $(\diff j_t(b^*))\cdot(\diff j_t(c))$ at time $t=0$ is equal to $\langle \eta(b),\eta(c)\rangle$, which completes the equality~\eqref{ITODEUX}.

Now, for $1\leq i,j\leq n$, $\mathcal{L}(u_{ij})$ is given by the integrand of the $\diff t$-part of $\diff j_t(u_{ij})$ at time $t=0$. Indeed, the three others parts are martingales. This integrand is given by~\eqref{stoeq}:\begin{equation*}\mathcal{L}(u_{ij})=\sum_{k=1}^n\tau(j_0(u_{ik})L(u_{kj}))=\sum_{k=1}^n\delta(u_{ik})L(u_{kj})=L(u_{ij});\end{equation*}
and it concludes the proof.
\end{proof}
Using Proposition~\ref{hwrL}, it is possible to rewrite Theorem~\ref{gegen} without mentioning any Schürmann triple.
\begin{cor}Let $H$ be a Hilbert space, $(h_{ij})_{1\leq i,j\leq n}\in \mathcal{M}_n(H)$ be elements of $H$, $(W_{ij})_{1\leq i,j\leq n}\in \mathcal{M}_n(\mathcal{B}(H))$ unitary and $(R_{ij})_{1\leq i,j\leq n}\in \mathcal{M}_n(\mathbb{C})$ self-adjoint. Then the coupled free stochastic equations\label{gegencor}
\begin{multline*}
\diff j_t(u_{ij})=\sum_{1\leq k \leq n}j_t(u_{ik})\\\cdot \Big(\diff c_t^*(h_{kj})-\diff c_t(\sum_{l=1}^nW_{lk}^*h_{lj})+\diff \Lambda_t(W_{kj}-\delta_{kj}Id_H)+(iR_{kj}-\frac{1}{2}\sum_{l=1}^n\langle h_{lk},h_{lj}\rangle_{H})\diff t\Big)\end{multline*}
for $1\leq i,j\leq n$, with initial conditions $j_0(u_{ij})=\delta_{ij} Id$ has a unique solution $\big(j_{t}(u_{ij})_{i,j=1}^n\big)_{t\geq 0}$ which extends to a free Lévy process $(j_t)_{t\geq 0}$ on $U\langle n\rangle$ over $(\mathcal{B}(\Gamma(L^2(\mathbb{R},H))),\tau)$.
\end{cor}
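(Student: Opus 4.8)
The plan is to obtain the statement as an immediate corollary of Theorem~\ref{gegen} together with Proposition~\ref{hwrL}: the corollary is nothing but the reformulation of Theorem~\ref{gegen} in which the Schürmann triple has been replaced by the equivalent data $(h_{ij})$, $(W_{ij})$, $(R_{ij})$, so no genuinely new argument is needed.

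First I would feed the given data into Proposition~\ref{hwrL}. Since $(W_{ij})_{1\leq i,j\leq n}$ is unitary and $(R_{ij})_{1\leq i,j\leq n}$ is self-adjoint, that proposition produces a unique Schürmann triple $(\rho,\eta,L)$ of $U\langle n\rangle$ on $H$ with
$$\rho(u_{ij})=W_{ij},\quad \eta(u_{ij})=h_{ij},\quad \eta(u_{ij}^*)=-\sum_{k=1}^n W_{ki}^* h_{kj},\quad L(u_{ij})=iR_{ij}-\frac12\sum_{k=1}^n\langle h_{ki},h_{kj}\rangle_H.$$

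Next I would substitute these expressions into the coupled free stochastic equations~\eqref{stoeq} of Theorem~\ref{gegen}. For each pair $k,j$ one has $\eta(u_{kj})=h_{kj}$, $\eta(u_{kj}^*)=-\sum_{l=1}^n W_{lk}^*h_{lj}$, $(\rho-\delta)(u_{kj})=W_{kj}-\delta_{kj}\Id_H$ and $L(u_{kj})=iR_{kj}-\frac12\sum_{l=1}^n\langle h_{lk},h_{lj}\rangle_H$, and plugging these into~\eqref{stoeq} reproduces verbatim the system displayed in the statement. Theorem~\ref{gegen} then yields at once the existence and uniqueness of the solution $\big(j_t(u_{ij})_{i,j=1}^n\big)_{t\geq 0}$ with the prescribed initial conditions, its extension to a free Lévy process $(j_t)_{t\geq 0}$ on $U\langle n\rangle$ with values in $\big(\mathcal{B}(\Gamma(L^2(\mathbb{R},H))),\tau\big)$, and---although not asked in the statement---the fact that the generator of that process equals $L$.

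There is no real obstacle here: the only point requiring attention is the term-by-term matching of the integrands, which is exactly the substitution just described and involves no computation beyond renaming indices. If one wishes, one may add that, by the converse half of Proposition~\ref{hwrL}, every Schürmann triple---hence, via Theorem~\ref{gegen} and the fact that the generator determines the process up to distribution, every free Lévy process on $U\langle n\rangle$---is captured by this corollary for a suitable choice of $(h_{ij})$, $(W_{ij})$ and $(R_{ij})$; this remark is optional but makes the reformulation exhaustive.
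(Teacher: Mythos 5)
Your proposal is correct and is exactly the paper's route: the corollary is stated immediately after Theorem~\ref{gegen} with the one-line justification that Proposition~\ref{hwrL} lets one trade the Schürmann triple $(\rho,\eta,L)$ for the data $(h_{ij})$, $(W_{ij})$, $(R_{ij})$, and the term-by-term substitution into~\eqref{stoeq} that you carry out is all that is needed. Your optional closing remark on exhaustiveness (via the converse half of Proposition~\ref{hwrL}) is a harmless addition not present in the paper.
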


\subsection*{Step 2}

%

Let $H$ be a Hilbert space, $(\rho,\eta,L)$ be a Schürmann triple of $U\langle 1 \rangle$ on $H$, and $K=L^2(\R_+,H)$. From Theorem~\ref{gegen}, we know that \begin{equation}
\diff U_t=U_t\Big(\diff c_t^*(\eta(u))+\diff c_t(\eta(u)^*)+\diff \Lambda_t((\rho-\delta)(u))+L(u)\diff t\Big)\label{defut}\end{equation}
with initial conditions $U_0=1$, has a unique solution $\big(U_t)_{t\geq 0}$ in $(\mathcal{B}(\Gamma(K)),\tau)$ which is a free Lévy process with Schürmann triple $(\rho,\eta,L)$. We consider
\begin{equation}j_t:U\langle n \rangle \to E_{11}(\mathcal{B}(\Gamma(K))\sqcup \mathcal{M}_n(\mathbb{C}))E_{11}\label{defjt}\end{equation}the free Lévy process defined by setting $j_t(u_{ij})=E_{1i}U_tE_{j1}$ as in Proposition~\ref{levyprocess}. The following theorem gives a stochastic equation on $\mathcal{B}(\Gamma(L^2(\mathbb{R},H)\otimes \mathcal{M}_n(\mathbb{C})))$ whose solution has the same distribution under the vacuum state than $j_t$.

Let us first remark that $L^2(\mathbb{R},H)\otimes \mathcal{M}_n(\mathbb{C})\simeq L^{2}(\mathbb{R}, H\otimes\mathcal{M}_n(\mathbb{C}))$. Thus, for all $h\otimes M\in H\otimes\mathcal{M}_n(\mathbb{C})$, the process $c^*_t(h\otimes M),c_t(h\otimes M) \in \mathcal{B}(\Gamma(K\otimes \mathcal{M}_n(\mathbb{C})))$ are defined as previously. Furthermore, for all $W\in \mathcal{B}(H)$ and $M\in \mathcal{M}_n(\mathbb{C})$, the conservation operator $\Lambda_t(W\otimes M)$ is defined as previously, with $M$ acting on $\mathcal{M}_n(\mathbb{C})$ by the left multiplication.

%

\begin{prop}Let $H$ be a Hilbert space and $(\rho,\eta,L)$ be a Schürmann triple of $U\langle 1 \rangle$ on $H$. Let $(U_t)_{t\geq 0}$ defined by~\eqref{defut} and $(j_t)_{t\geq 0}$ defined by~\eqref{defjt}. There exists a homomorphism of probability spaces\label{bigeq} $\rho:\Big(E_{11}(\mathcal{B}(\Gamma(K))\sqcup \mathcal{M}_n(\mathbb{C}))E_{11},n(\phi\ast \tr_n)\Big)\to \Big(\mathcal{B}(\Gamma(K\otimes \mathcal{M}_n(\mathbb{C}))),\langle\Omega,(\cdot)\Omega\rangle\Big)$ such that the free Lévy process $(J_t)_{t\geq 0}=(\rho \circ j_t)_{t\geq 0}$ is solution of the following differential equation, starting at $J_0(u_{ij})=\delta_{ij}\Id$:
\begin{multline*}
\diff J_t(u_{ij})=\sum_{1\leq k \leq n}J_t(u_{ik})\\
\cdot \left(\diff c_t^*(\eta(u)\otimes E_{kj})+\diff c_t(\eta(u^*)\otimes E_{kj})+\frac{1}{n}\diff \Lambda_t(((\rho-\delta)(u))\otimes E_{kj})+\delta_{kj}L(u)\diff t\right).\end{multline*}
\end{prop}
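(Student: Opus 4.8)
The plan is to build $\rho$ as a compression of a concrete free‑product representation and then to obtain the stochastic equation for $(J_t)_{t\geq0}$ by applying $\rho$ to the block decomposition of the defining equation~\eqref{defut} of $(U_t)_{t\geq0}$; this realizes the free analogue of the Fock‑space construction carried out by Schürmann in the tensor case~\cite{Schurmann1993}.

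First I would realize $\mathcal{B}(\Gamma(K))\sqcup\mathcal{M}_n(\C)$, equipped with its free product state $\tau\ast\tr_n$, on the reduced free product of the pointed Hilbert spaces $(\Gamma(K),\Omega)$ and $(\mathcal{M}_n(\C),I_n)$ — the former being the vacuum‑pointed full Fock space, the latter the GNS space of $\tr_n$ with $\mathcal{M}_n(\C)$ acting by left multiplication — so that the joint vacuum vector state equals $\tau\ast\tr_n$. The substantial point is then to identify the compression of this free product Hilbert space by the projection $E_{11}$ with $\Gamma(K\otimes\mathcal{M}_n(\C))$ (where $\mathcal{M}_n(\C)$ carries the $\tr_n$‑inner product), the compressed joint vacuum corresponding to the vacuum $\Omega$, and to track the fundamental operators through this identification. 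Compressing the representation to that subspace produces a $*$‑homomorphism $\rho$ on $E_{11}(\mathcal{B}(\Gamma(K))\sqcup\mathcal{M}_n(\C))E_{11}$ with $\rho(E_{11})=\Id$, and one reads off, for $h\in H$, $W\in\mathcal{B}(H)$ and $1\leq i,j\leq n$,
\[
\rho\big(E_{1i}\,c_t^*(h)\,E_{j1}\big)=c_t^*(h\otimes E_{ij}),\qquad \rho\big(E_{1i}\,\Lambda_t(W)\,E_{j1}\big)=\frac{1}{n}\,\Lambda_t(W\otimes E_{ij}),
\]
together with $\rho(E_{1i}(\lambda\Id)E_{j1})=\lambda\,\delta_{ij}\Id$ for scalars $\lambda$ (this last one needing no Fock‑space analysis, only $\rho(E_{11})=\Id$). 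Since $\|E_{11}\Omega\|^{2}=(\tau\ast\tr_n)(E_{11})=1/n$, the normalised compressed vacuum reproduces the state $n(\tau\ast\tr_n)$ on $E_{11}(\cdots)E_{11}$ — exactly the normalisation appearing in the proof of Proposition~\ref{McC} — so $\rho$ is a homomorphism of probability spaces. As an alternative to the explicit Fock‑space identification, one may take the formulas above as the definition of $\rho$ on the generating operators and verify directly, by the free cumulant computation of Proposition~\ref{cyclic}, that the resulting functional on words coincides with $n(\tau\ast\tr_n)$.

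Granting $\rho$, the stochastic equation is essentially formal. As $\rho$ is a state‑preserving $*$‑homomorphism and $(j_t)_{t\geq0}$ is a free Lévy process on $U\langle n\rangle$ by Proposition~\ref{levyprocess}, the family $(J_t)_{t\geq0}=(\rho\circ j_t)_{t\geq0}$ is again one, with $J_0(u_{ij})=\rho(E_{1i}E_{j1})=\delta_{ij}\Id$. Under the isomorphism of Section~\ref{buildstate}, $\mathcal{B}(\Gamma(K))\sqcup\mathcal{M}_n(\C)\simeq\big(E_{11}(\cdots)E_{11}\big)\otimes\mathcal{M}_n(\C)$, the element $U_t$ corresponds to $\sum_{i,j}j_t(u_{ij})\otimes E_{ij}$ (because $j_t(u_{ij})=E_{1i}U_tE_{j1}$), while the operator $\diff X_t:=\diff c_t^*(\eta(u))+\diff c_t(\eta(u^*))+\diff\Lambda_t\big((\rho-\delta)(u)\big)+L(u)\diff t$, which lies in the left leg $\mathcal{B}(\Gamma(K))$, corresponds to $\sum_{k,j}(E_{1k}\,\diff X_t\,E_{j1})\otimes E_{kj}$. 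Multiplying out $\diff U_t=U_t\,\diff X_t$ block by block therefore gives $\diff j_t(u_{ij})=\sum_{k}j_t(u_{ik})\,(E_{1k}\,\diff X_t\,E_{j1})$, and applying $\rho$ together with the three identities above — where $(\rho-\delta)(u)=\rho(u)-\Id_H$ yields the advertised factor $\frac1n$ in front of $\Lambda_t$ — gives exactly the claimed equation for $\diff J_t(u_{ij})$.

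The hard part is the first step: setting up the free‑product representation and, above all, establishing the explicit unitary identification of the $E_{11}$‑compression of the free product Hilbert space with $\Gamma(K\otimes\mathcal{M}_n(\C))$, together with the careful bookkeeping of the compressed creation, annihilation and conservation operators — in particular getting the factor $1/n$ in front of $\Lambda_t$ and the matrix leg $E_{ij}$ right. This Fock‑space computation is the free counterpart of Schürmann's tensor‑case analysis; once it is in place, everything else reduces to the block matrix calculus of Section~\ref{buildstate} and the multiplicativity of $\rho$.
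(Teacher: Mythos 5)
Your proposal is correct and follows essentially the same route as the paper: realize $\mathcal{B}(\Gamma(K))\sqcup\mathcal{M}_n(\mathbb{C})$ on the reduced free product Hilbert space, identify the $E_{11}$-corner (via the isomorphism $\Gamma(K)\ast\mathcal{M}_n(\mathbb{C})\simeq\Gamma(K\otimes\mathcal{M}_n(\mathbb{C}))\otimes\mathcal{M}_n(\mathbb{C})$ and the invariant subspace $\Gamma(K\otimes\mathcal{M}_n(\mathbb{C}))\otimes\mathbb{C}E_{11}$) with operators on $\Gamma(K\otimes\mathcal{M}_n(\mathbb{C}))$, record the same three compression identities for $c_t^*$, $c_t$, $\Lambda_t$ together with the normalization $n(\phi\ast\tr_n)=\langle\Omega,\rho(\cdot)\Omega\rangle$, and derive the stochastic equation by inserting $\sum_k E_{k1}E_{1k}=I_n$ and applying $\rho$. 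Like the paper, you leave the Fock-space bookkeeping behind those identities as a (correctly identified) routine verification, so there is nothing substantively different to flag.
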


\begin{proof}Let us first describe the free product representation of $\mathcal{B}(\Gamma(K))\sqcup\mathcal{M}_n(\mathbb{C})$ given in~\cite{Voiculescu1992}. We consider $\mathcal{M}_n(\mathbb{C})$ acting on itself by the left multiplication. We denote by $\Gamma(K)^{\circ}$ the Hilbert space $\bigoplus_{n\geq 1} K^{\otimes n}$ and by $\mathcal{M}_n(\mathbb{C})^\circ$ the Hilbert space $\mathcal{M}_n(\mathbb{C})\ominus \mathbb{C}I_n$, in such a way that
$$\Gamma(K)=\Gamma(K)^\circ\oplus \mathbb{C}\Omega\ \text{ and }\ \mathcal{M}_n(\mathbb{C})=\mathcal{M}_n(\mathbb{C})^\circ\oplus \mathbb{C}I_n.$$
We denote by $k\mapsto k^\circ$ and $M\mapsto M^\circ$ the respective orthogonal projection of $\Gamma(K)$ onto $\Gamma(K)^\circ$ and of $\mathcal{M}_n(\mathbb{C})$ onto $\mathcal{M}_n(\mathbb{C})^\circ$. We consider the Hilbert space $\Gamma(K)\ast \mathcal{M}_n(\mathbb{C})$ given by
$$\Gamma(K)\ast \mathcal{M}_n(\mathbb{C})=\mathbb{C}\Omega\oplus \bigoplus_{m\geq 1}\left(\bigoplus_{\substack{H_1,\dots,H_n=\Gamma(K)^\circ\text{ or }\mathcal{M}_n(\mathbb{C})^\circ\\ H_i\neq H_{i+1}}} H_1\otimes \cdots \otimes H_m\right).$$
The algebra $\mathcal{B}(\Gamma(K))$ acts on $\Gamma(K)\ast \mathcal{M}_n(\mathbb{C})$ as follows: for $A\in \mathcal{B}(\Gamma(K)), k \in \Gamma(K)^\circ$ and $M\in \mathcal{M}_n(\mathbb{C})^\circ$, we have
\begin{align*}
\left(\pi(A)\right)\left(\Omega\right)&=(A\Omega)^\circ+\langle \Omega,A\Omega\rangle \Omega,\\
\left(\pi(A)\right)\left(k\otimes (\cdots) \right)& =(Ak)^\circ\otimes (\cdots)+\langle \Omega,Ak\rangle\cdot (\cdots),\\
\left(\pi(A)\right)\left(M\otimes (\cdots) \right)& =(A\Omega)^\circ\otimes M \otimes (\cdots)+\langle \Omega,A\Omega\rangle \cdot M\otimes (\cdots).
\end{align*}
Similarly, the algebra $\mathcal{M}_n(\mathbb{C})$ acts on $\Gamma(K)\ast \mathcal{M}_n(\mathbb{C})$ as follows: for $A\in \mathcal{M}_n(\mathbb{C}), k \in \Gamma(K)^\circ$ and $M\in \mathcal{M}_n(\mathbb{C})^\circ$, we have
\begin{align*}
\left(\lambda(A)\right)\left(\Omega\right)&=A^\circ+\langle I_N,A \rangle \Omega,\\
\left(\lambda(A)\right)\left(k\otimes (\cdots) \right)& =A^\circ\otimes k\otimes(\cdots)+\langle I_N,A\rangle \cdot k\otimes(\cdots),\\
\left(\lambda(A)\right)\left(M\otimes (\cdots) \right)& =(AM)^\circ \otimes (\cdots)+\langle I_N,AM\rangle \cdot (\cdots).
\end{align*}
According to~\cite[Section 1.5]{Voiculescu1992}, the $*$-homomorphism $\pi\sqcup \lambda:\Big(\mathcal{B}(\Gamma(K))\sqcup \mathcal{M}_n(\mathbb{C}),\phi\ast \tr_n\Big)\to \Big(\mathcal{B}(\Gamma(K)\ast \mathcal{M}_n(\mathbb{C})),\langle  \Omega,(\cdot) \Omega\rangle\Big)$ is a $*$-homomorphism of noncommutative probability spaces.

\begin{lm}There exists a Hilbert space isomorphism\label{isomorphism}
$$\Gamma(K)\ast \mathcal{M}_n(\mathbb{C})\to \Gamma(K\otimes \mathcal{M}_n(\mathbb{C}))\otimes \mathcal{M}_n(\mathbb{C})$$
which induces a $*$-algebra isomorphism
$$f:\mathcal{B}\Big(\Gamma(K)\ast \mathcal{M}_n(\mathbb{C})\Big)\to \mathcal{B}\Big( \Gamma(K\otimes \mathcal{M}_n(\mathbb{C}))\otimes \mathcal{M}_n(\mathbb{C})\Big).$$
\end{lm}
\begin{proof}We will use the three well-known isomorphisms
$$K\simeq K\otimes \mathbb{C}, \ (K\otimes \mathbb{C}) \oplus (K\otimes \mathcal{M}_n(\mathbb{C})^\circ)\simeq K\otimes \mathcal{M}_n(\mathbb{C})\ \text{ and }\ \mathcal{M}_n(\mathbb{C})\simeq \mathbb{C}^n\otimes \mathbb{C}^n.$$
It suffices to write
\begin{align*}
\Gamma(K)\ast \mathcal{M}_n(\mathbb{C})\hspace{-1cm}&\hspace{+1cm}\simeq\mathbb{C}\Omega\oplus \bigoplus_{m\geq 1}\left(\bigoplus_{\substack{H_1,\dots,H_m=\Gamma(K)^\circ\text{ or }\mathcal{M}_n(\mathbb{C})^\circ\\ H_i\neq H_{i+1}}} H_1\otimes \cdots \otimes H_m\right)\\
&\simeq\mathbb{C}\Omega\oplus \bigoplus_{m\geq 1}\left(\bigoplus_{\substack{k_1+1+k_2+\ldots+1+k_N=m\\k_2,\ldots, k_{N-1}\geq 1}} K^{\otimes k_1}\otimes \mathcal{M}_n(\mathbb{C})^\circ\otimes K^{\otimes k_2}\cdots \otimes \mathcal{M}_n(\mathbb{C})^\circ\otimes K^{\otimes k_N}\right)\\
&\simeq \bigoplus_{m'\geq 1}\left(\bigoplus_{H_1,\dots,H_{m'}=\mathbb{C} \text{ or }\mathcal{M}_n(\mathbb{C})^\circ} H_1\otimes K\otimes H_2\otimes \cdots \otimes K\otimes H_{m'} \right)\\
&\simeq \bigoplus_{m'\geq 1}\underbrace{ \mathcal{M}_n(\mathbb{C}) \otimes K\otimes \mathcal{M}_n(\mathbb{C})\otimes \cdots \otimes K\otimes \mathcal{M}_n(\mathbb{C})}_{\text{where }\mathcal{M}_n(\mathbb{C})\text{ appears }m'\text{ times}}\\
&\simeq \bigoplus_{m'\geq 1}\underbrace{ (\mathbb{C}^n\otimes \mathbb{C}^n) \otimes K\otimes (\mathbb{C}^n\otimes \mathbb{C}^n)\otimes \cdots \otimes K\otimes (\mathbb{C}^n\otimes \mathbb{C}^n)}_{\text{where }\mathbb{C}^n\otimes \mathbb{C}^n\text{ appears }m'\text{ times}}\\
&\simeq \mathbb{C}^n\otimes\left(\bigoplus_{m\geq 0}\underbrace{  (\mathbb{C}^n \otimes K\otimes \mathbb{C}^n)\otimes \cdots \otimes (\mathbb{C}^n\otimes K\otimes \mathbb{C}^n)}_{\text{where }\mathbb{C}^n \otimes K\otimes \mathbb{C}^n\text{ appears }m\text{ times}}\right)\otimes \mathbb{C}^n\\
&\simeq\mathbb{C}^n\otimes \Gamma(\mathbb{C}^n\otimes K\otimes \mathbb{C}^n)\otimes \mathbb{C}^n\\
&\simeq \Gamma( K\otimes \mathcal{M}_n(\mathbb{C}))\otimes \mathcal{M}_n(\mathbb{C})
\end{align*}
and to define the Hilbert space isomorphism $\Gamma(K)\ast \mathcal{M}_n(\mathbb{C})\to \Gamma(K\otimes \mathcal{M}_n(\mathbb{C}))\otimes \mathcal{M}_n(\mathbb{C})$ accordingly.
\end{proof}
Unfortunately, we do not see any way of writing $f$ directly, and for computing it, we will always follow the different steps of the proof of Lemma~\ref{isomorphism}.

We are interested in the $*$-subalgebra $E_{11}(\mathcal{B}(\Gamma(K))\sqcup \mathcal{M}_n(\mathbb{C}))E_{11}$, and it is important to remark here that its image by $f\circ (\pi\sqcup \lambda)$ is an algebra of operator which leaves the space $\Gamma( K\otimes \mathcal{M}_n(\mathbb{C}))\otimes \mathbb{C} E_{11}$ invariant (it suffices to follow each steps of the proof of Lemma~\ref{isomorphism}). Consequently, when restricted to $E_{11}(\mathcal{B}(\Gamma(K))\sqcup \mathcal{M}_n(\mathbb{C}))E_{11}$, the $*$-homomorphism $f\circ (\pi\sqcup \lambda)$ can be seen as a $*$-homomorphism
$$\rho:E_{11}(\mathcal{B}(\Gamma(K))\sqcup \mathcal{M}_n(\mathbb{C}))E_{11}\to \mathcal{B}\left(\Gamma( K\otimes \mathcal{M}_n(\mathbb{C}))\right)$$
using the trivial isomorphism $\Gamma( K\otimes \mathcal{M}_n(\mathbb{C}))\otimes \mathbb{C} E_{11}\simeq \Gamma( K\otimes \mathcal{M}_n(\mathbb{C}))$.
It is now a routine, following the steps of Lemma~\ref{isomorphism}, to verify that
\begin{itemize}
\item $n(\phi\ast \tr_n)(A)=\langle \Omega,\rho(A)\Omega\rangle$ for all $A\in E_{11}(\mathcal{B}(\Gamma(K))\sqcup \mathcal{M}_n(\mathbb{C}))E_{11}$;
\item $\rho(E_{1i}c_t^*(h )E_{j1})=c_t^*(h\otimes E_{ij})$ for all $h\in H$,
\item $\rho(E_{1i}c_t(h )E_{j1})=c_t(h\otimes E_{ij})$ for all $h\in H$,
\item and $\rho(E_{1i}\Lambda_t(W)E_{j1})=\frac{1}{n}\Lambda_t(W\otimes E_{ij})$ for all $W\in \mathcal{B}(H)$.
\end{itemize}
To conclude, let us write
\begin{align*}\diff j_t(u_{ij})=\hspace{-1cm}&\hspace{1cm}E_{1i}\diff U_tE_{j1}\\
&=E_{1i} U_t\cdot \left(\diff c_t^*(\eta(u))+\diff c_t(\eta(u)^*)+\diff \Lambda_t((\rho-\delta)(u))+L(u)\diff t\right)E_{j1}\\
&=\sum_{1\leq k \leq n}E_{1i} U_t E_{k1}\cdot E_{1k}\left(\diff c_t^*(\eta(u))+\diff c_t(\eta(u)^*)+\diff \Lambda_t((\rho-\delta)(u))+L(u)\diff t\right)E_{j1}\\
&=\sum_{1\leq k \leq n} j_t(u_{ik})\cdot E_{1k}\left(\diff c_t^*(\eta(u))+\diff c_t(\eta(u)^*)+\diff \Lambda_t((\rho-\delta)(u))+L(u)\diff t\right)E_{j1}
\end{align*}
and then apply the homomorphism $\rho$.
\end{proof}

\subsection*{Step 3}We conclude the proof of Theorem~\ref{thSchurmannbiz}. Recall that we start from a free unitary Lévy process $(U_t)_{t\geq0}$ with Schürmann triple $(\rho,\eta,L)$. Because Theorem~\ref{thSchurmannbiz} uniquely depends on the distribution of our random variables, we can without loss of generality represent $(U_t)_{t\geq0}$ as the solution of the stochastic equation~\eqref{defut}. Let $j_t:U\langle n \rangle \to E_{11}(\mathcal{B}(\Gamma(K))\sqcup \mathcal{M}_n(\mathbb{C}))E_{11}$ be the Lévy process defined by setting, for all $1\leq i,j\leq n$, $j_t(u_{ij})=E_{1i}U_tE_{j1}$ as in Proposition~\ref{levyprocess}.

We want to prove that $(\rho_n,\eta_n,L_n)$ defined by setting, for all $1\leq i,j\leq n$,
\begin{multline}\rho_n(u_{ij})=\frac{1}{n}(\rho(u)-\Id_H)\otimes E_{ij}+\delta_{ij}\Id_H\otimes I_N,\\
\ \ \eta_n(u_{ij})=\eta(u)\otimes E_{ij}, \ \ \eta_n(u_{ij}^*)=\eta(u^*)\otimes E_{ij}, \ \ L_n(u_{ij})=\delta_{ij}L(u),\label{Schurmannbizproof}\end{multline}
is the Schürmann triple of $(j_t)_{t\geq 0}$.

First of all, $(\rho_n,\eta_n, L_n)$ given by~\eqref{Schurmannbizproof} is a well-defined Schürmann triple. Indeed, defining $(h_{ij})_{1\leq i,j\leq n}$, $(W_{ij})_{1\leq i,j\leq n}$ unitary, and $(R_{ij})_{1\leq i,j\leq n}$ selfadjoint by
\begin{multline*}W_{ij}=\frac{1}{n}(\rho(u)-\Id_H)\otimes E_{ij}+\delta_{ij}\Id_H\otimes I_N,\\
\ \ h_{ij}=\eta(u)\otimes E_{ij}, \ \ R_{ij}=-i\left(\delta_{ij}L(u)+\frac{1}{2}\sum_{k=1}^n\langle h_{ki},h_{kj}\rangle_{H\otimes \mathcal{M}_n(\C)}\right),\end{multline*}
we can apply Proposition~\ref{hwrL} and conclude that $(\rho_n,\eta_n, L_n)$ is a Schürmann triple whenever $\eta(u^*)\otimes E_{ij}=-\sum_{k=1}^nW_{ki}^*h_{kj}$ (because in that case the relations~\eqref{Schurmanntriple} and \eqref{Schurmannbizproof} are the same). Let us verify this fact:
\begin{multline*}-\sum_{k=1}^nW_{ki}^*h_{kj}=-\frac{1}{n}\sum_{k=1}^n\left(\rho(u)^*\eta(u)\otimes E_{ik}E_{kj}-\eta(u)\otimes E_{ik}E_{kj}\right)-\eta(u)\otimes E_{ij}\\=-\rho(u)^*\eta(u)\otimes E_{ij}=\eta(u^*)\otimes E_{ij}.\end{multline*}
Proposition~\ref{bigeq} gives us the stochastic equation which drives the process $(j_t)_{t\geq 0}$ (or at least a process which has the same distribution):
\begin{multline*}
\diff j_t(u_{ij})=\sum_{1\leq k \leq n}j_t(u_{ik})\\
\cdot \left(\diff c_t^*(\eta(u)\otimes E_{kj})+\diff c_t(\eta(u^*)\otimes E_{kj})+\frac{1}{n}\diff \Lambda_t(((\rho-\delta)(u))\otimes E_{kj})+\delta_{kj}(L(u))\diff t\right),\end{multline*}
or equivalently,
\begin{equation}
\diff j_t(u_{ij})=\sum_{1\leq k \leq n}j_t(u_{ik})\Big(\diff c_t^*(\eta_n(u_{kj}))+\diff c_t(\eta_n(u_{kj}^*))+\diff \Lambda_t((\rho_n-\delta)(u_{kj}))+L_n(u_{kj})\diff t\Big).\end{equation}
Theorem~\ref{gegen} allows us to conclude that $(\rho_n,\eta_n, L_n)$ is the Schürmann triple of $(j_t)_{t\geq 0}$, which concludes the proof of Theorem~\ref{thSchurmannbiz}.

\subsection{An example: the free unitary Brownian motion}The free unitary Brownian motion introduced in~\cite{Biane1997} is the unique solution $(U_t)_{t\geq 0}$ in $\mathcal{B}(\Gamma(L^2(\R,\C))) $, starting at $U_0=\Id$, of the free stochastic equation
$$
\diff U_t=iU_t(\diff c_t^*(1)+\diff c_t(1))-\frac{1}{2}U_t\diff t,$$
or equivalently, of the equation $
\diff U_t=U_t\cdot (\diff c_t^*(-i)+\diff c_t(i)-\frac{1}{2}\diff t).$ It corresponds to a Lévy process over $U\langle 1 \rangle$ given by $(u\mapsto U_t)_{t\geq 0}$, and from Theorem~\ref{gegen}, we know that the Schürmann triple $(\rho,\eta, L)$ on $\C$ of this process is given by $\rho(u)=\Id_\C$, $\eta(u)=-\eta(u^*)=-i$ and $L(u)=-\frac{1}{2}$. Concretely, using the definition of a Schürmann triple, it means that, for all polynomial $P\in \C[X]$,
$$\frac{\diff}{\diff t}\tau(P(U_t))=L(P(u))=-\frac{1}{2}P'(1)-P''(1).$$
The free Lévy process $j_t:U\langle n \rangle \to E_{11}(\mathcal{B}(\Gamma(K))\sqcup \mathcal{M}_n(\mathbb{C}))E_{11}$ defined by $j_t(u_{ij})=E_{1i}U_tE_{j1}$ is then (thanks to Proposition~\ref{bigeq}), equal in distribution to the solution $(J_t)_{t\geq 0}$ of
\begin{align}
\diff J_t(u_{ij})&=\sum_{1\leq k \leq n}J_t(u_{ik})
\cdot \left(\diff c_t^*( -iE_{kj})+\diff c_t(iE_{kj})-\frac{1}{2}\delta_{kj}\diff t\right)\nonumber\\
&=i\sum_{1\leq k \leq n}J_t(u_{ik})
\cdot \left(\diff c_t^*(E_{kj})+\diff c_t(E_{kj})\right)-\frac{1}{2}J_t(u_{ij})\diff t,\label{gaussianprocess}
\end{align}
the Lévy process on $U\langle n\rangle$ under study in~\cite{Ulrich2014}. Theorem~\ref{limitrandomun} gives the same conclusion as in~\cite{Ulrich2014}: because the Brownian motion $(U_t^{(N)})_{t\geq 0}$ on the unitary group $U(N)$ defined and studied in~\cite{Biane1997} converges in $*$-distribution to the free unitary Brownian motion $(U_t)_{t\geq 0}$ as $N$ tends to $\infty$, the $N\times N$-block matrices $\left([U_t^{(nN)}]_{ij}\right)_{\substack{1\leq i,j\leq n\\ t\geq0}}$ converge almost surely in $*$-distribution to $ \Big(J_{t}(u_{ij})\Big)_{\substack{1\leq i,j\leq n\\ t\geq0}}$ as $N$ tends to $\infty$.

Theorem~\ref{thSchurmannbiz} shows that the representation $\rho_n$ in the Schürmann triple $(\rho_n,\eta_n,L_n)$ of $(J_t)_{0\leq s\leq t}$ on $\mathcal{M}_n(\C)$ is equal to $\delta \cdot \Id_{\mathcal{M}_n(\C)}$, which means that $(J_t)_{t\geq 0}$ is a gaussian process on $U\langle n\rangle$ (in the sense of~\cite{Franz2006}). Moreover, this process is non-degenerate in the following sense:
\begin{prop}\label{haarlimite}
Let $(J_t)_{t\geq 0}$ be the Lévy process on $U\langle n\rangle$ defined by~\eqref{gaussianprocess}. Then, when $t$ goes to infinity, the distribution of $(J_t)_{t\geq 0}$ converges towards the free Haar trace.
\end{prop}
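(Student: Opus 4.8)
The plan is to exploit that the whole construction is governed by free cumulants, so that the asymptotics of $(J_t)_{t\ge 0}$ are inherited from those of the underlying free unitary Brownian motion $(U_t)_{t\ge 0}$. The distribution of $J_t$ is the state $a\mapsto\langle\Omega,J_t(a)\Omega\rangle$ on $U_{n}^{\nc}$, which by Proposition~\ref{bigeq} coincides with $\big(n(\phi\ast\tr_n)\big)\circ j_{U_t}$; we must show that it converges pointwise on $U_{n}^{\nc}$ to the free Haar trace $h$ as $t\to\infty$.

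The first step is to recall, from~\cite{Biane1997}, that the free unitary Brownian motion converges in $*$-distribution to a Haar unitary; since $U_t$ is unitary, hence normal, this amounts to $\tau(U_t^k)\to 0$ for every $k\ge 1$. By the moment-cumulant formula it is equivalent to the convergence of every free cumulant $\kappa_r(U_t^{\epsilon_1},\dots,U_t^{\epsilon_r})$ to the value $(-1)^{r/2-1}C_{r/2-1}$ when $r$ is even and the $\epsilon_i$ alternate, and to $0$ otherwise, i.e.\ to the free cumulants of a Haar unitary listed in Proposition~\ref{unitaire}. If a self-contained argument is preferred, one notes that $t\mapsto\tau(U_t^k)$ satisfies a linear differential equation driven by the generator whose solution carries a factor $e^{-kt/2}$, hence tends to $0$.

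The second step is to transport this convergence through the boosting $j_{U_t}$. Applying Proposition~\ref{cyclic} to $U^{(1)}=U_t$ and $U^{(2)}=U_t^*$ expresses the free cumulants of the generators $\{J_t(u_{ij}),J_t(u_{ij}^*)\}_{1\le i,j\le n}$ explicitly in terms of those of $U_t$: they vanish unless the indices are cyclic, and on the cyclic terms they equal $n^{1-r}\kappa_r(U_t^{\epsilon_1},\dots,U_t^{\epsilon_r})$, up to the relabelling $(u^*)_{ij}=u_{ji}^*$. Letting $t\to\infty$ and using the first step, each such free cumulant converges to the corresponding value in Corollary~\ref{cyclicu} (equivalently Proposition~\ref{cyclicuu}), that is, to the free cumulant of $\{u_{ij},u_{ij}^*\}$ when $U_{n}^{\nc}$ is endowed with its free Haar trace $h$.

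Finally, fixing a word $m=(u^{\epsilon_1})_{i_1j_1}\cdots(u^{\epsilon_r})_{i_rj_r}$ and expanding $\langle\Omega,J_t(m)\Omega\rangle$ by the moment-cumulant formula gives a \emph{finite} sum, over $NC(r)$, of products of free cumulants of order at most $r$; by the second step this converges as $t\to\infty$ to the same expansion of $h(m)$. Hence the distribution of $J_t$ converges pointwise on $U_{n}^{\nc}$ to $h$, which by Proposition~\ref{McC} is the free Haar trace, completing the proof. The only genuinely non-formal ingredient is the asymptotic Haar-unitarity of the free unitary Brownian motion; everything else is the bookkeeping of the cumulant transfer already established earlier in the paper.
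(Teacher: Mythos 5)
Your proposal is correct and follows essentially the same route as the paper: both reduce the statement to the convergence of the free unitary Brownian motion to a Haar unitary (via Biane's explicit moments) and then transport this through the block compression $u_{ij}\mapsto E_{1i}U_tE_{j1}$, whose limiting distribution is the free Haar trace; your use of Proposition~\ref{cyclic} merely makes explicit the transport step that the paper asserts directly. One small slip: the identification of $[n(\phi\ast\tr_n)]\circ j_U$ with the free Haar trace is the content of Section~\ref{freecase} (the proof of Theorem~\ref{Haartracetheorem} in the free case), not of Proposition~\ref{McC}.
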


\begin{proof}
Let $(U_t)_{t\geq 0}$ be a free multiplicative Brownian motion in a non-commutative probability space $(\mathcal A,\Phi)$. Then, $(J_t)_{t\geq 0}$ is equal in distribution to $j_t:U\langle n \rangle \to E_{11}(\mathcal A\sqcup \mathcal{M}_n(\mathbb{C}))E_{11}$ defined by setting, for all $1\leq i,j\leq n$, $j_t(u_{ij})=E_{1i}U_tE_{j1}$.

It is well-known that $(U_t)_{t\geq 0}$ converge in $*$-distribution to a Haar unitary variable $U$ as $t$ tends to $\infty$. Indeed, there is an explicit description of the moments of $U_t$ in~\cite{Biane1997}, namely
$$\tau(U_t^k)=e^{-\frac{kt}{2}}\sum_{i=0}^{k-1}(-1)^i\frac{t^i}{i!}k^{i-1}\binom{k}{i+1},\ \ k\geq 1,$$and they converge to zero, which are the moments of a Haar unitary variable $U$. As a consequence, $(j_t(u_{ij}))_{1\leq i,j\leq n}$ converge in $*$-distribution to $(E_{1i}UE_{j1})_{1\leq i,j\leq n}$ as $t$ tends to $\infty$, where $(E_{ij})_{1\leq i,j\leq n}$ are free from $U$. But $u_{ij}\mapsto E_{1i}UE_{j1}$ is a quantum random variable whose distribution is the free Haar trace (see Section~\ref{freecase}). Consequently, $(j_t)_{t\geq 0}$ converge in distribution to the free Haar trace, and so do $(J_t)_{t\geq 0}$.
\end{proof}

\bibliography{dualgroup}
\bibliographystyle{plain}
\end{document}